\documentclass[12pt]{article}
\usepackage{graphicx} 
\usepackage{amsmath,amssymb,amsthm,mathrsfs}
\usepackage{tikz}
\usepackage{latexsym}
\usepackage[left=25mm, right=25mm]{geometry}
\usepackage{comment}

\theoremstyle{plain}
\newtheorem{theorem}{Theorem}[section]
\newtheorem{proposition}[theorem]{Proposition}

\newtheorem{lemma}[theorem]{Lemma}
\newtheorem{example}[theorem]{Example}
\newtheorem{remark}[theorem]{Remark}
\newtheorem{definition}[theorem]{Definition}
\numberwithin{equation}{section}

\newcommand{\Z}{\mathbb{Z}}
\renewcommand{\O}{\mathcal{O}}
\renewcommand{\P}{\mathbb{P}}
\newcommand{\Q}{\mathbb{Q}}
\newcommand{\R}{\mathbb{R}}
\newcommand{\C}{\mathbb{C}}
\newcommand{\ii}{\sqrt{-1}}

\def\ba{\boldsymbol{a}}
\def\bb{\boldsymbol{b}}
\def\bc{\boldsymbol{c}}
\def\be{\boldsymbol{e}}
\def\bg{\boldsymbol{\gamma}}

\def\bp{\boldsymbol{p}}
\def\bq{\boldsymbol{q}}
\def\bu{\boldsymbol{u}}
\def\bv{\boldsymbol{v}}
\def\bw{\boldsymbol{w}}
\def\bx{\boldsymbol{x}}
\def\by{\boldsymbol{y}}
\def\bz{\boldsymbol{z}}
\def\balpha{\boldsymbol{\alpha}}
\def\bbeta{\boldsymbol{\beta}}
\def\wh#1{\widehat{#1}}
\def\ol#1{\overline{#1}}

\def\cF{\mathscr{F}}

\def\dR{{\mathrm{d\hspace{-0.2pt}R}}}            
\def\Hom{{\mathrm{Hom}}}  
\def\Fer{{\rm Fer}}  
\def\Fereq{F_d}  
\def\Spec{\operatorname{Spec}}     
\def\Sol{{\text{\bf Sol}}}

\def\Image{{\mathrm{Im}}}        
\def\Res{\mathrm{Res}}
\def\ord{{\mathrm{ord}}}
\def\Fan{{\mathrm{Fan}}}            
            
\def\vol{{\mathrm{vol}}}

\def\PG{R_{\Gamma,\psi}}           
\def\RNAfK{\pi_1^{\rm loc}(\mathrm{Fer})}

\def\ot{\otimes}
\def\lra{\longrightarrow}

\def\bt{{\boldsymbol{t}}}

\def\os#1#2{\overset{#1}{#2}}

\usepackage[affil-it]{authblk}

\title{
Periods of Limiting Mixed Hodge Structures of Projective Hypersurfaces
}
\author{Masanori Asakura and Saiei-Jaeyeong Matsubara-Heo}
\date{}

\begin{document}

\maketitle

\begin{abstract}
For a generic one-parameter degeneration of projective hypersurfaces, we show that the periods of the limiting mixed Hodge structure are generated by certain special values of 
logarithm, Gamma and Dirichlet $L$-functions.
Our proof is based on the analytic continuation of solutions to the GKZ system.
\end{abstract}

\section{Introduction}
The subject of this paper is 
the mixed Hodge structures arising from a one-parameter degeneration of algebraic varieties,
which we call the {\it limiting mixed Hodge structure}
(abbreviated as limiting MHS).
The limiting MHS was introduced and developed by
Schmid \cite{schmid1973variation} and Steenbrink \cite{steenbrink1976limits}.
Later on, in the theory of mixed Hodge modules by M. Saito \cite{saito1990mixed}, 
it was generalized via nearby cycle functors in full generality.
Roughly speaking, it is a theory of
asymptotic expansions of period integrals.
The periods have a link to number theory.
For example, Deligne introduces the so-called Deligne periods for
projective smooth varieties over $\Q$, and his conjecture asserts that 
they agree with critical values of motivic $L$-functions (\cite{deligne1979valeurs},
\cite[\S 2]{schneider4introduction}).
Furthermore, the Beilinson conjectures assert that for
mixed Hodge structures
arising from the higher Chern class maps in algebraic $K$-theory,
the off-diagonal entries 
of the period matrices provide
non-critical $L$-values (\cite[\S 5]{schneider4introduction}).
Although a period arising in a limiting MHS is not a period integral of an algebraic variety in the strict sense, 
Ayoub provided a motivic construction of nearby cycle functors in terms of Voevodsky's category
(cf. \cite{ayoub2007motivic}).
Thus, it is natural to expect that the limiting MHS is also connected to number theory.
In this paper, we focus on an arithmetic nature of the periods of the limiting MHS.

The limiting MHS is extensively studied for a family of hypersurfaces, and
the recent progress is brought from tropical geometry.
For example, E. Katz and Stapledon provide a combinatorial expression of
the Hodge numbers of the limiting MHSs of sch\"on toric hypersurfaces (\cite{katz2012tropical}, \cite{katz2016tropical}).
In contrast, the study of their periods is more limited.
Thanks to the influential work of Candelas et al. \cite{candelas1991pair},
the period of the limiting MHS often appears in the study of mirror symmetry (cf. \cite{vanStraten}).
We do not intend to give an exhaustive bibliography; instead, 
we only refer to 
his recent paper by Yamamoto \cite{yamamoto2024period}.
There, he studies the mirror symmetric gamma conjecture (cf. \cite{abouzaid2020gamma}),
and he shows that Riemann zeta values appear in the periods of the limiting MHS of 
a family of tropical  hypersurfaces.
It is noteworthy that the hypersurfaces discussed in \cite{yamamoto2024period}
are not necessarily Calabi-Yau manifolds, while 
the families are restricted to satisfy a certain strict condition.
The readers can find a list of related works in 
\cite[Introduction]{yamamoto2024period}.

Despite this progress, it is a widely open question to describe periods of the limiting MHS explicitly by arithmetic invariants.
The purpose of this paper is to give an answer 
to the question for generic degenerations of projective hypersurfaces.
To be more precise, we determine the asymptotic expansions of the period integrals of hypersurfaces completely, and as a consequence, 
we show that
the periods of limiting MHS are generated by special values of
logarithmic function, the Gamma function $\Gamma(s)$
and Dirichlet $L$-functions $L(s,\chi)$.
A caveat is that Riemann's zeta values are not enough to describe the periods of limiting MHS in contrast to the study of mirror symmetry (see e.g., \cite[Conjectures 1, 2]{vanStraten}).

To state our main theorem, let $n\geq 2$ be an integer
and let $(x_1,\ldots,x_n)$ denote the homogeneous
coordinates of $\P^{n-1}$.
For an integer $m>n$, we consider a generically smooth 
family of hypersurfaces $X$ defined by a homogeneous polynomial 
\[
F=\sum_{i=1}^m z_i\bx^{\ba_i}
\]
of degree $d$ with deformation parameters $z_i$.
Let $f_1(t),\ldots, f_m(t) \in K\{\!\{t\}\!\}[t^{-1}]$ be
non-zero meromorphic functions at $t=0$ whose Taylor expansions are
defined over a subfield $K\subset\C$.
Let $X_f/\Delta$ be a flat family of hypersurfaces defined by a polynomial $F_f=\sum_{i=1}^m f_i(t)\bx^{\ba_i}$ over the unit
disk $\Delta$.
Suppose that $X_f$ is smooth over $\Delta\setminus\{0\}$.
The {limiting MHS} for the family $X_f/\Delta$ is a quadruple
\[
H^{n-2}_\infty(X_f/\Delta)_K=(H_{\dR,\infty,K},\,H_{B,\infty},\,F^\bullet H_{\dR,\infty,K},\,W_\bullet H_{B,\infty},\,\iota_{\infty,K})
\]
where $H_{\dR,\infty,K}$ (resp. $H_{B,\infty}$) is a finite-dimensional $K$-vector space
with the Hodge filtration $F^\bullet$
(resp. $\Q$-vector space with the monodromy filtration $W_\bullet$), and $\iota_{\infty,K}$ is the comparison isomorphism 
\begin{equation}\label{eq:intro1}
\C\ot_K H_{\dR,\infty.K}\os{\cong}{\lra} \C\ot_\Q H_{B,\infty}, 
\end{equation}
(see \S \ref{sect:ReviewLimitMHS} for the precise definition).
By the {\it limiting periods} of $H^{n-2}(X_f/\Delta)_K$, we mean
the entries of the matrix representation of 
the map \eqref{eq:intro1}
with respect to a $K$-basis of 
$H_{\dR,\infty,K}$ and a $\Q$-basis of $H_{B,\infty}$.
By focusing on the primitive part of the de Rham cohomology group, we compute the limiting periods from the asymptotic expansions of period integrals of the following form:
\begin{equation}\label{eq:period integral intro}
    2\pi\ii\int_{\delta}{\rm Res}\left(\frac{x^{\bc-\boldsymbol{1}}}{F_f^r}\Omega\right)=\sum_{i\geq 0}h_i(t)(\log t)^i,\quad h_i(t)\in\bigcup_{N\geq 1}\C\{t^{1/N}\}[t^{-1}],
\end{equation}
where $\Omega$ is given by the formula \eqref{eq:Omega}, ${\rm Res}$ denotes Leray's residue along the vanishing locus of $F_f$ in $\P^{n-1}$, $r\in \mathbb{Z}_{>0}$ and $\bc=(c_1,\dots,c_n)\in\mathbb{Z}_{\geq 0}^n$ satisfy $c_1+\cdots+c_n=dr$.
More precisely, the limiting periods are computed from the coefficients of $h_i(t)$.
See \S\ref{sect:step2} for details.

Our main result concerns a number-theoretic description of the limiting periods for 
$X_f/\Delta$ under a certain genericity condition on the orders $\ord_t(f_1),\ldots,\ord_t(f_m)$.
Let
$A=\begin{bmatrix}
    \ba_1&\cdots&\ba_m
\end{bmatrix}$
be an $n\times m$-matrix, and let
$\Fan(A)$ denote the secondary fan (\cite[Chapter 7, Section 1.D]{GKZbook} and \cite[Chapter 8]{sturmfels1996grobner}). 
Let $\mathrm{Sk}(\Fan(A))\subset(\R^m)^*$ be the skeleton of ${\rm Fan}(A)$
(see \eqref{eq:skeleton.def} for definition).
Let $\bw_f
\in(\R^m)^*$ be the vector such
that $\bw_f\cdot\be_i=\ord_t(f_i)$ where $\{\be_i\}_{i=1,\ldots,m}$ is the
standard basis of $\R^m$.
Let $\be'_1,\ldots,\be'_n$ be the standard basis of $\R^n$, and
we define $N_A$ to be the smallest positive integer
such that $N_A[\ba'_{1}\cdots\ba'_{n}]^{-1}$ is an integer matrix
for any vectors $\ba'_1,\ldots,\ba'_n\in\{\ba_1,\ldots,\ba_m,d\be'_1,\ldots,d\be'_n\}$
such that $\det[\ba'_{1}\cdots\ba'_{n}]\ne0$.
Our main theorem is stated as follows.

\begin{theorem}
\label{thm:intro}
Suppose
$\bw_f\not\in \mathrm{Sk}(\Fan(A))$.
 Then, the limiting periods of $H^{n-2}_\infty(X_f/\Delta)_K$
lie in the $\ol K$-subalgebra of $\C$
generated by special values
\[(2\pi\ii)^{\pm1},\quad\log \overline{K}^\times,\quad
\Gamma\left(\frac{j}{N_A}\right)^{\pm1},\quad
L(k,\chi)
\]
where $j$ and $k$ run over integers
such that $0<j<N_A$ and $2\leq k$,
and $\chi$ runs over Dirichlet 
characters of conductors dividing $N_A$. 
\end{theorem}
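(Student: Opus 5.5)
We reduce everything to explicit asymptotic expansions of the period integrals \eqref{eq:period integral intro}, viewed as solutions of the GKZ system attached to the matrix $A$ (homogenized by the degree, i.e.\ $\begin{bmatrix}1&\cdots&1\\ \ba_1&\cdots&\ba_m\end{bmatrix}$ with parameter $(-r,-\bc)$). Step one: write the residue integral on $X_{z}$ as an Euler-type integral $\int_\delta x^{\bc}F^{-r}\,dx/x$ and observe that, as a function of $z\in(\C^\times)^m$, it is annihilated by the GKZ system $M_A(\beta)$. Since $H^{n-2}_{\mathrm{prim}}$ of the fibre is spanned by the classes $\Res(x^{\bc-\boldsymbol 1}F^{-r}\Omega)$, and the non-primitive part is the span of powers of the hyperplane class, whose periods are already powers of $2\pi\ii$, it suffices to treat these integrals. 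The de Rham $K$-basis of $H_{\dR,\infty,K}$ is obtained from such forms via the Deligne extension and $\mathrm{Gr}$ of the nearby-cycle construction, as in \S\ref{sect:step2}. Thus the limiting periods are $K$-linear combinations of the leading coefficients of the $h_i(t)$ in \eqref{eq:period integral intro}.

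Step two uses genericity. Because $\bw_f\notin\mathrm{Sk}(\Fan(A))$, the weight $\bw_f$ lies in the interior of a maximal cone of the secondary fan, corresponding to a regular triangulation $T$ of $A$. Substituting $z_i=f_i(t)$, the asymptotics as $t\to0$ are governed by the Gröbner degeneration in the direction $\bw_f$. Hence we can use the $\Gamma$-series (Fernández-Fernández / GKZ) basis of solutions convergent in the region associated with $T$. These series are sums over simplices $\sigma\in T$, with possibly logarithmic terms when $\beta$ is resonant, which it is since the parameters are integral. Each solution is $\sum_{\sigma}\sum_{k\in\Z^{m-n}}$ of monomials $z^{v}$ with coefficients $1/\prod\Gamma(v_i+1)$, where the exponents $v_\sigma$ have denominators dividing $\det(A_\sigma)$. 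This is where $N_A$ enters.

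Step three is the crux and the main obstacle: express the actual integration cycles $\delta$ (a $\Q$-basis of $H_{B,\infty}$, namely nearby cycles) in terms of the $\Gamma$-series basis. We would do this by analytic continuation. Start from a region where $\delta$ is a standard cycle, e.g.\ a Pochhammer-type or torus cycle near a vertex of $\Fan(A)$, whose integral expands as an explicit hypergeometric series. Then connect it to the $T$-region by Mellin–Barnes integrals, equivalently by the connection formulas for $\Gamma$-series across walls of the secondary fan. The connection coefficients are products of values $\Gamma(j/N_A)^{\pm1}$ and trigonometric factors $e^{2\pi\ii j/N_A}$, which lie in $\ol K$ times $(2\pi\ii)^{\pm1}$ via the reflection formula. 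The delicate point is resonance. Logarithmic solutions arise as derivatives of $\Gamma$-series in the parameter $\beta$ (a deformation $\beta+\epsilon\gamma$, then the limit $\epsilon\to0$). Differentiating products of Gamma functions at rational points then produces polygamma values $\psi^{(k-1)}(j/N_A)$. Via the Hurwitz zeta function and orthogonality of characters mod $N_A$, these are $\ol K$-combinations of $L(k,\chi)$ with conductor dividing $N_A$, together with $\log$ of cyclotomic units and $\pi$ for $k=1$.

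Step four collects the pieces. Substitute $z_i=f_i(t)=c_it^{w_i}(1+\cdots)$ with $c_i\in K^\times$. Monomials $z^{v}$ then contribute $\prod c_i^{v_i}\in\ol K$ and $t^{\langle\bw_f,v\rangle}$. Logarithms of $z$ contribute $\log c_i\in\log\ol K^\times$ plus multiples of $\log t$. Higher-order terms of the $f_i$ and of the series only affect non-leading coefficients with $\ol K$-rational factors. Since $\bw_f$ is generic, the minimal exponent is attained on a single simplex structure, so no cancellations introduce new constants. Finally, applying the $\ol K$-linear change of basis to the de Rham basis and inverting the comparison matrix (the inverse requires $\Gamma^{\pm1}$ and $(2\pi\ii)^{\pm1}$) shows that all limiting periods lie in the stated $\ol K$-algebra.
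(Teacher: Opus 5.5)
\textbf{Gap in Step three: no $\Q$-basis of cycles with explicit expansions.} Your overall architecture matches the paper's: $\Gamma$-series in the chamber of $T=S(\bw_f)$, analytic continuation across walls of the secondary fan, $\varepsilon$-deformation of the parameter producing polygamma values, then Lemma~\ref{lem:main}. But the step you call the crux is left open, and it is where the real work lies. Limiting periods are taken with respect to a $\Q$-\emph{basis} of $H^B_{n-1}(U_{\bz},\Q)$. So you need a $\Q$-spanning set of cycles whose integrals have explicit expansions, with coefficients in the right ring, in \emph{some} chamber. A single torus or Pochhammer cycle gives one solution, not a basis, and you give no argument that its monodromy orbit spans the rational Betti lattice. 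Moreover, for a general $A=[\ba_1\cdots\ba_m]$ there is no obvious chamber in which the fibre degenerates to something whose rational homology you can write down; ``a vertex of $\Fan(A)$'' does not supply one.

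\textbf{How the paper closes it.} It uses two steps that are missing from your plan.
\begin{enumerate}
\item \emph{Reduction to Fermat deformations.} Replace $F_f$ by $F_f+t^N(x_1^d+\cdots+x_n^d)$ with $N\gg0$. The identification $\omega_{F_f,\bc}\mapsto\omega_{F_f^{(N)},\bc}$ respects the $V$-filtration and the comparison map, so the limiting periods are unchanged. By Lemma~\ref{lem:compare.skeleton}, $\bw_{f'}$ still avoids the skeleton of the enlarged secondary fan.
\item \emph{Rational basis in the chamber $T(\Fer)$.} The paper takes the explicit Fermat cycle $\delta_0$, whose periods are Dirichlet integrals (Lemma~\ref{lem:Fermat-cycle}). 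Its deformation $\delta_{\bz}$ gives $\int_{\delta_{\bz}}\omega_{\bc}$ as $(2\pi\ii)^n(-1)^r/(r-1)!$ times $\sum_{\bp\in P_{\bc}}\varphi(\bg^{\bc}_{\bp};\bz)$ (Lemma~\ref{lem:key-lem}). It then proves $H^B_{n-1}(U_{\bz},\Q)=\Q[\RNAfK,G]\cdot\delta_{\bz}$ (Lemma~\ref{lem:image of the integration map}). The ingredients are the embedding $\Sol^{T}(H_\dR(\bc))\subset\Sol^{T}(M_A(\bc))$ from Negishi's lemma, one-dimensionality of the local monodromy eigenspaces, and the rank count of Lemma~\ref{lem:Fermat-dR-lemma}.
\end{enumerate}
Only then does the connection formula carry this rational lattice to the chamber $T$. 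Here Lemma~\ref{prop:connection formula} is needed, since the $\varepsilon$-dependence of the trigonometric connection coefficients must be controlled, not just that of the $\Gamma$-series.

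\textbf{Smaller points.}
\begin{itemize}
\item You need \emph{all} coefficients of $g_0(t)$, not only leading ones, because elements of $V_\alpha$ are $K(\!(t)\!)$-combinations of the $\omega_{\bc}$.
\item What genericity of $\bw_f$ buys is finiteness of each truncation (Lemma~\ref{prop:finite truncation}), not ``absence of cancellation.''
\item $\Gamma(j/N_A)$ itself does not lie in $\ol K\cdot(2\pi\ii)^{\pm1}$; only reflection products $\Gamma(x)\Gamma(1-x)$ do.
\end{itemize}
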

\noindent
See Theorem \ref{thm:main} (together with Lemma \ref{lem:main}) for a more precise statement.
Theorem \ref{thm:intro} asserts only that the limiting 
periods lie in the algebra, though one is able to compute them
explicitly using Theorem \ref{thm:arbitrary triangulation}.
The assumption of Theorem \ref{thm:intro}
is equivalent to saying that $\bw_f$ corresponds
to a regular triangulation of $A$ (the convex hull of 
$\boldsymbol{0},\ba_1,\ldots,\ba_m$).
This is satisfied if 
$\ord_t(f_1),\ldots,\ord_t(f_m)$ are generic.
Furthermore, there is a simple description of the skeleton (Proposition \ref{prop:skeleton})
allowing one to readily verify the condition.
The case where $\bw_f\in \mathrm{Sk}(\Fan(A))$ remains an interesting open question.

The proof of Theorem \ref{thm:intro}
is based on the theory of {\it GKZ-systems}, different from tropical geometric approaches.
Gel'fand, Kapranov, and Zelevinsky introduced
a holonomic $D$-module arising from a pair $(A,\bc)$ consisting of a matrix 
and a complex vector, which we call the GKZ-system, denoted by $M_A(\bc)$.
A distinguishing feature is the fact that solutions to a GKZ system describe the period integrals of a generic family of hypersurfaces.
Our strategy of the proof of Theorem \ref{thm:intro} is as follows: 
firstly, it is not hard to
reduce the proof to the case where $F$ is
a {\it Fermat deformation}
\begin{equation}\label{eq:Fermat deformation intro}
\sum_{j=1}^m z_j\bx^{\ba_j}+z_{m+1}x_1^d+\cdots+z_{m+n}x_n^d.  
\end{equation}
By the assumption that $\bw_f\notin{\rm Sk}(\Fan(A))$, $\bw_f$ belongs to a maximal cone of $\Fan(A)$.
Since each maximal cone of the secondary fan corresponds to a regular triangulation of the Newton polytope of \eqref{eq:Fermat deformation intro}, we write $T$ for the corresponding regular triangulation.
$T$ defines an open subset $U_T\subset\C^{m+n}={\rm Specan}(\C[z_1,\dots,z_{m+n}])$ and we write $\Sol^T(M_A(\bc))$ for the space of (multivalued) holomorphic solutions of $M_A(\bc)$ on $U_T$.
It turns out that the period integral \eqref{eq:period integral intro} is the pull-back $f^*\varphi$ of a solution $\varphi\in\Sol^T(M_A(\bc))$.
Thus, our task is to specify the $\overline{K}$-subspace of $f^*\Sol^T(M_A(\bc))$ spanned by the period integrals \eqref{eq:period integral intro}.

To do so, we completely determine the asymptotic expansions of period integrals \eqref{eq:period integral intro} when $\bw_f$ corresponds to a canonical regular triangulation which we denote by $T(\Fer)$.
The period integrals \eqref{eq:period integral intro} are expanded into the so-called {\it Gamma series}, a multi-lateral series whose coefficients are ratios of Gamma functions.
Our next step is to apply 
the {\it connection formula} proved in \cite{matsubara2022global}
(see also Theorem \ref{thm:connection formula}).
This allows us to transform the Gamma series into 
a sum of another Gamma series 
which converge at the required degeneracy locus.
Namely, it computes the analytic continuation
\begin{equation}\label{eq:connection intro}
\Sol^{T(\Fer)}(M_A(\bc))\to\Sol^{T}(M_A(\bc))    
\end{equation}
along a certain path.
Thus, we obtain explicit asymptotic expansions of the periods \eqref{eq:period integral intro} of an arbitrary 
generic degeneration of hypersurfaces.
See Theorem \ref{thm:arbitrary triangulation} for details.
Since the limiting periods are determined by the asymptotic expansion of the period integrals, 
we can derive Theorem \ref{thm:intro}.


As we saw above, the description of the subspace of $f^*\Sol^T(M_A(\bc))$ is slightly obscure, though there is an algorithm for describing it.
To illustrate the algorithm explicitly, we introduce in \S\ref{sect:Dwork family} a class of degenerations which we name {\it perturbation of the generalized Dwork family}.
It is characterized in terms of the regular triangulation $T$ and gives rise to a class of limiting periods including those of the generalized Dwork family.
Theorem \ref{thm:analytic continuation of p.Dwork} is a closed formula for the analytic continuation \eqref{eq:connection intro}.
We note that the formula of analytic continuation employed in this paper is improved compared to the one in \cite{matsubara2022global}.

\medskip

This paper is organized as follows.
In \S\ref{sect:2}, 
we review 
the fundamental definitions and results for GKZ systems.
In particular, in \S\ref{sect:GKZ-Hypersurface},
we summarize the series solutions of GKZ systems, especially
the SST-limits (the space of limits of Gamma series) and the connection formula in detail.
This is a technical key to the proof of our main theorem.
Throughout \S\ref{sect:Fermat}, we devote ourselves to discussing the periods of the Fermat deformation.
The main result in this section is Theorem \ref{thm:arbitrary triangulation},
which allows us to compute the asymptotic expansions of period integrals explicitly.
In \S\ref{sect:limitMHS}, after a brief review of the limiting MHS, 
we prove the main theorem (Theorem \ref{thm:main}).
Finally, in \S\ref{sect:Dwork family}, we introduce the perturbation of the generalized Dwork family and describe its limiting periods explicitly.
In \S\ref{subsec:Dwork family}, we recall the limiting periods of the generalized Dwork family.
In \S\ref{subsec:p.Dwork}, we formulate the perturbation of the generalized Dwork family in terms of a regular triangulation and describe its limiting periods.

\medskip

The authors would like to thank Ryo Negishi for sharing Lemma \ref{lem:Negishi-thm}.
The first author is supported by KAKENHI Grant No. 23K0302503.
The second author is supported by KAKENHI Grant-in-Aid for Early-Career Scientists 22K13930.

\medskip

\section{GKZ systems}\label{sect:2}
\subsection{GKZ systems and secondary fans}\label{sect:GKZ system}

Let $M=\Z^N$ and $M'=\Z^n$ with positive integers $n<N$.
For an additive group $S$, we write $S_K=K\otimes_{\Z}S$ where $K=\Q$, $\R$ or $\C$.
We consider an additive homomorphism $A : M \to M'$ such that the induced morphism $M_{\Q}\to M'_{\Q}$ is a surjection.
Let
\[
A=\begin{bmatrix}\ba_1&\cdots &\ba_N\end{bmatrix}
=\begin{bmatrix}
a_{11}&\cdots &a_{1N}\\
\vdots&&\vdots\\
a_{n1}&\cdots &a_{nN}
\end{bmatrix}
\]
be the matrix representation with respect to the standard basis of $M$ and $M'$
(by abuse of notation, we use the same symbol for the matrix).
The kernel of $A$ is denoted by $L_A$.
We write $\partial_i:=\partial/\partial z_i$ for the partial derivative in $z_i$ and let
$D=\C\langle z_i,\partial_i;i=1,\dots,N\rangle$ be the Weyl algebra.
Let
$\bc=(c_1,\dots,c_n)\in M'_{\C}=\C^n$ be a complex vector.
We call a pair $(A,\bc)$
a {\it GKZ system}.
The toric ideal $I_A$ is an ideal of $\C[\partial_1,\ldots,\partial_N]$ generated by
\begin{equation}\label{eq:toric.gens}
    \prod_{i:u_i>0}\partial_i^{u_i}-\prod_{j:u_j<0}\partial_j^{u_j}\ \ (\bu=(u_1,\dots,u_N)\in L_A).
\end{equation}
The GKZ ideal $H_A(\bc)$ is the left ideal of $D$ generated by \eqref{eq:toric.gens}
and 
\begin{equation}\label{eq:GKZ.gens}
\sum_{i=1}^{m}a_{ij}z_j\partial_j+c_i\ \ (i=1,\dots,n),
\end{equation}
and put $M_A(\bc):=D/H_A(\bc)$ a left $D$-module.
Throughout this paper, we assume that $A$ is homogeneous, which means that there is a 
row vector $\bv=(v_1,\dots,v_n)\in\Q^n$ such that 
\begin{equation}\label{eq:dual vector}
    \bv \ba_j:=\sum_{i=1}^nv_ia_{ij}=1\quad (j=1,\dots,N),
\end{equation}
or equivalently $|\bu|=u_1+\cdots+u_N=0$ for all $\bu\in L_A$.
This implies that $M_A(\bc)$ is a regular holonomic $D$-module (\cite[p27, Theorem]{hotta1998equivariant}, \cite{schulze2012resonance}).

Let $\Delta_A$ denote the convex hull of $\ba_1,\dots,\ba_N$ and the origin in $M'_{\R}=\R^n$.
We write $\{\be_1,\ldots,\be_N\}$ for the standard basis of $M_\R$.
A standard simplex in $M'_{\R}$ is defined to be the set $\{\sum_{i=1}^nt_i\be_i\mid 0\leq t_i\leq 1\}$.
Let $d\mu$ be the Lebesgue measure on $M'_{\R}$ normalized so that the standard simplex 
in $M'_\R$ has volume one.
The normalized volume of $\Delta_A$ which we write by ${\rm vol}(A)$
is defined by
\begin{equation}\label{eq:normalized volume}
{\rm vol}(A)
:=
\frac{1}{[M':\Z A]}\int_{\Delta_A}d\mu,    
\end{equation}
where 
$[M':\Z A]$ denotes the cardinality of the quotient group $M'/\Z\ba_1+\cdots+\Z\ba_N$.
It is known that ${\rm vol}(A)$ agrees with the degree of the toric ideal $I_A$ 
(\cite[Theorem 4.16]{sturmfels1996grobner}).

We recall the definition of a regular polyhedral subdivision of $\Delta_A$ {(\cite[Chapter 7]{GKZbook}, \cite[Chapter 8]{sturmfels1996grobner})}. 
For any subset $\sigma$ of $\{1,\dots,N\},$ we write ${\rm cone}(\sigma)$ for the positive span of {$\{\ba_i\}_{i\in\sigma}$,} i.e., ${\rm cone}(\sigma)=\sum_{i\in\sigma}\R_{\geq 0}\ba_i.$ 
A collection {$S$} of subsets of $\{1,\dots,N\}$ is called a {\it polyhedral subdivision} if $\{{\rm cone}(\sigma)\mid \sigma\in S\}$ is the set of cones in a {polyhedral} fan whose support equals ${\rm cone}(A)=\sum_{i=1}^N\R_{\geq 0}\ba_i$.
For a real vector space $V_\R$, let $V_{\R}^*$ denote the dual vector space.
Let $\{\be^*_1,\ldots,\be_N^*\}$ be the dual basis of $\{\be_1,\ldots,\be_N\}$. 
Any $\bw=\sum_{i=1}^Nw_i \be_i^*\in M^*_{\R}$ defines a polyhedral subdivision $S(\bw)$ in the following way : a subset $\sigma\subset\{1,\dots,N\}$ belongs to $S(\bw)$ if there exists a vector ${\boldsymbol n}\in (M_{\R}')^*$ such that 
\begin{equation}\label{eq:normal vector}
    {\boldsymbol n}\cdot\ba_i=w_i\text{ if }i\in\sigma\text{ and }{\boldsymbol n}\cdot\ba_j<w_j\text{ if } j\notin{\sigma},
\end{equation}
where we think $\ba_i$ of being a vector in $M'_\R$, and the dot product denotes
the duality pairing $(M'_\R)^*\ot M'_\R\to\R$.
A polyhedral subdivision $S$ is called a {\it regular polyhedral subdivision} if $S=S(\bw)$ for some $\bw$.
\begin{remark}\label{rem:2.1}
An element $\sigma\in S(\bw)$ is said to be independent if $\{ \ba_i\mid i\in\sigma\}$ is linearly independent.
This is equivalent to that $\{ (w_i,\ba_i)\mid i\in\sigma\}$ is linearly independent in $\R\oplus M'_{\R}$.
In fact, if $\{ \ba_i\mid i\in\sigma\}$ is linearly dependent, there exists $c_i\in\R$ $(i\in\sigma)$ such that $\sum_{i\in\sigma}c_i\ba_i=0$.
Then, \eqref{eq:normal vector} implies that $\sum_{i\in\sigma}c_iw_i=0$.
Thus, it follows that $\sum_{i\in\sigma}c_i(w_i,\ba_i)=0$.    
\end{remark}
A subset $\sigma\subset\{1,\dots,N\}$ is called a {\it simplex} if the cardinality $\sharp \sigma$ of $\sigma$ equals $n$ and it is independent.
If any maximal (with respect to inclusion) element $\sigma$ of a regular polyhedral subdivision $T$ is a simplex, we call $T$ a {\it regular triangulation}. 
For a subset $\sigma\subset\{1,\dots,N\}$, we write $C_\sigma$ for the set of $\bw\in M^*_{\R}$ which satisfies \eqref{eq:normal vector}.
Given a regular polyhedral subdivision $S$, we write $C_S$ for the cone consisting of vectors $\bw$ such that $S(\bw)=S$.
Then, by definition, we obtain a relation
\begin{equation}
    C_S=\bigcap_{\sigma\in S}C_\sigma\subset M_{\R}^*.
\end{equation}
The set of cones ${\rm Fan}(A):=\{ C_S\mid S \text{ is a regular polyhedral subdivision}\}$ forms a polyhedral fan.
We call ${\rm Fan}(A)$ the {\it secondary fan} of $A$.
It is readily seen from the definition that each $C_\sigma$ contains 
the linear subspace $(M'_{\R})^*\subset M^*_\R$, where 
the inclusion is given by the transpose of $A$.
Therefore, the secondary fan is identified with a polyhedral fan in $(L_{A,\R})^*\simeq M_{\R}^*/(M_{\R}')^*$.
Let $\pi_A:M_{\R}^*\rightarrow (L_{A,\R})^*$ be the linear map induced from the inclusion
$L_A\hookrightarrow M$.
Put ${\boldsymbol g}_i:=\pi_A(\be_i^*)\in (L_{A,\R})^*$.
The set $\{{\boldsymbol g}_1,\dots,{\boldsymbol g}_N\}$ is called
the {\it Gale dual} of $A$.
Each cone $\pi_A(C_S)$ is described in terms of Gale dual as follows.
For a subset $\sigma\subset\{ 1,\dots,N\}$ and a subset $S\subset 2^{\{1,\dots,N\}}$, we set 
\begin{equation}\label{eq:bar C}
\bar{C}_\sigma:=\sum_{i\notin\sigma}\R_{>0}{\boldsymbol g}_i\subset (L_{A,\R})^*.    
\end{equation}
Then, for a regular subdivision $S$, the equality
\begin{equation}\label{eq:bar C sigma gale}
\pi_A(C_S)=\bigcap_{\sigma\in S}\bar{C}_\sigma
\end{equation}
holds as in \cite[p20, Corollary 1]{stienstra2007gkz}.
We put
\begin{equation}\label{eq:skeleton.def}
    \mathrm{Sk}(\Fan(A)):=\bigcup_{\dim C_S<N} C_S\subset M_\R^*
\end{equation}
and call it the {\it skeleton} of $\Fan(A)$.
The explicit description of the skeleton of ${\rm Fan}(A)$ is given in the following proposition.

\begin{proposition}\label{prop:skeleton}
The following identity holds true.
    \begin{equation}\label{eq:skeleton}
    \pi_A({\rm Sk}({\rm Fan}(A)))=\bigcup_{1\leq i_1<\dots<i_{N-n-1}\leq N}\left(\R_{\geq 0}{\boldsymbol g}_{i_1}+\cdots+\R_{\geq 0}{\boldsymbol g}_{i_{N-n-1}}\right).
    \end{equation}
Furthermore, ${\rm Sk}({\rm Fan}(A))$ is the preimage of \eqref{eq:skeleton} by $\pi_A$.
\end{proposition}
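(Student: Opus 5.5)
The plan is to work entirely in $(L_{A,\R})^*$, which has dimension $N-n$. Since every $C_S$ contains $(M'_\R)^*=\ker\pi_A$, each $C_S$ is the full preimage of $\pi_A(C_S)$, and $\dim C_S<N$ if and only if $\dim\pi_A(C_S)<N-n$. The second assertion (that ${\rm Sk}$ is the preimage of \eqref{eq:skeleton}) therefore follows immediately once the first is proved. So it suffices to show that the union of the lower-dimensional cones $\pi_A(C_S)$ equals the union $Z$ of the cones $\R_{\geq0}\boldsymbol g_{i_1}+\cdots+\R_{\geq0}\boldsymbol g_{i_{N-n-1}}$.

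The key step is to characterize the full-dimensional cones. They are exactly the cones $\pi_A(C_T)$ with $T$ a regular triangulation, and $\bw$ lies in the interior of such a cone if and only if $S(\bw)$ is a triangulation. Using \eqref{eq:bar C sigma gale}: if $\sigma$ is a simplex, then its complement has $N-n$ elements and $\{\boldsymbol g_i\}_{i\notin\sigma}$ is a basis of $(L_{A,\R})^*$. Indeed, the Gale dual vectors indexed by a complement are independent exactly when the $\ba_i$, $i\in\sigma$, are independent. Hence $\bar C_\sigma$ is an open simplicial cone, and for a triangulation $T$ the intersection $\bigcap_{\sigma\in T}\bar C_\sigma$ is a finite intersection of open full-dimensional cones. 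It is therefore open, and nonempty by regularity, so it is full-dimensional.

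Conversely, suppose $S(\bw)$ has a maximal cell $\sigma$ that is not a simplex. By Remark \ref{rem:2.1}, $\sigma$ is then dependent or has more than $n$ elements. In either case there is a nonzero relation $\bu\in L_A$ supported on $\sigma$, and $\langle\bw,\bu\rangle=0$ by \eqref{eq:normal vector}. So $\pi_A(\bw)$ lies on the hyperplane $\bu^\perp$, and $\pi_A(C_{S(\bw)})$ is contained in that hyperplane. This shows that the lower-dimensional cones are exactly the $C_S$ with $S$ not a triangulation.

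Next I show that $\bw$ gives a non-triangulation if and only if $\pi_A(\bw)\in Z$.

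\emph{($\Leftarrow$)} Suppose $\pi_A(\bw)=\sum_{k}\lambda_k\boldsymbol g_{i_k}$ with $\lambda_k\geq0$ over $N-n-1$ indices. Lift this to $\bw'=\sum\lambda_k\be^*_{i_k}$, which has the same $\pi_A$ image and hence the same subdivision. Then $\bw'$ vanishes on a set $\tau$ of at least $n+1$ indices and is nonnegative everywhere. Taking $\boldsymbol n=0$ in \eqref{eq:normal vector} gives a cell of $S(\bw')$ containing $\tau$, and the conditions are strict off the zero set of $\bw'$. This cell has at least $n+1$ elements, so it is not contained in a simplex, and $S(\bw)$ is not a triangulation.

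\emph{($\Rightarrow$)} Suppose $S(\bw)$ contains a cell $\sigma$ with $\sharp\sigma\geq n+1$ or with $\sigma$ dependent; I expect this direction to be the main obstacle. Choose $\boldsymbol n$ as in \eqref{eq:normal vector} and replace $\bw$ by $\bw-{}^tA\boldsymbol n$. The new vector vanishes on $\sigma$ and is positive off $\sigma$, so $\pi_A(\bw)\in\sum_{i\notin\sigma}\R_{\geq0}\boldsymbol g_i$, a cone generated by at most $N-n-1$ Gale vectors when $\sharp\sigma\geq n+1$. If $\sigma$ is dependent with $\sharp\sigma\le n$, I enlarge it: since ${\rm cone}(\sigma)$ is a face of a subdivision covering ${\rm cone}(A)$ and $A$ has rank $n$, $\sigma$ lies in some maximal cell. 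That maximal cell contains a dependent set, so it is not a simplex, and being full-dimensional it has at least $n+1$ elements. This reduces to the previous case.

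The care needed is in this last step: the maximal cell must be handled correctly, including padding the index set with zero coefficients to get exactly $N-n-1$ indices. The case where $\sum_{i\notin\sigma}$ has fewer than $N-n-1$ terms is harmless, since extra zero coefficients are allowed.
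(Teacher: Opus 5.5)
Your proof is correct and follows essentially the paper's route. For the right-hand side inclusion you exhibit a cell of $S(\bw)$ with at least $n+1$ elements via the nonnegative lift and $\boldsymbol n=0$, which is by hand what the paper extracts from \eqref{eq:bar C sigma gale}, and the preimage statement uses invariance of each $C_S$ under translation by $(M'_\R)^*$; beyond this you prove the inclusion the paper calls obvious and the equivalence between $\dim C_S<N$ and $S$ not being a triangulation, which the paper uses tacitly. Your cell, the zero set of the lifted coefficients, is also the right replacement for the paper's span-based choice $\{r+1,\dots,N\}$, which is not guaranteed to lie in $S(\bw)$.
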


\begin{proof}
    It is obvious by the construction that the left-hand side of \eqref{eq:skeleton} is included in the right-hand side.
    Let us prove the reverse inclusion.
    We fix an element $\pi_A(\bw)\in \R_{\geq 0}{\boldsymbol g}_{i_1}+\cdots+\R_{\geq 0}{\boldsymbol g}_{i_{N-n-1}}$.
    It is enough to show that $S(\bw)$ is not a triangulation.
    Without loss of generality, we may assume that $i_1=1,\dots,i_{N-n-1}=N-n-1$ and there is a number $1\leq r\leq N-n-1$ so that ${\rm span}_{\R}\{{\boldsymbol g}_{i_1},\dots,{\boldsymbol g}_{i_{N-n-1}}\}={\rm span}_{\R}\{{\boldsymbol g}_1,\dots,{\boldsymbol g}_r\}$.
    Then, by \cite[p20, Corollary 1]{stienstra2007gkz}, it follows that $\sigma=\{r+1,\dots,N\}$ appears in $S(\bw)$.
    The cardinality of $\sigma$ is strictly larger than $n$, hence it is not a simplex.
    Therefore, $S(\bw)$ is not a triangulation.
    This completes the proof of \eqref{eq:skeleton}.
    The last statement follows from the fact that every $C_\sigma$ include the linear subspace $(M'_\R)^*$.
\end{proof}

\medskip

The following lemma will be used in the proof of
Theorem \ref{thm:main}.

\begin{lemma}\label{lem:compare.skeleton}
Let $A=\begin{bmatrix}\ba_1&\cdots&\ba_N\end{bmatrix}$ be as before, and let
$A'=\begin{bmatrix}\ba_1&\cdots&\ba_N&d\be_1&\cdots &d\be_n
    \end{bmatrix}$ 
    where $\be_i$ is the $i$-th standard vector.
    For any $\bw=(w_1,\dots,w_N)\in\R^N$ such that the corresponding regular subdivision $S(\bw)$ of $A$ is a regular triangulation, there exists $w_{N+1},\cdots,w_{n+N}\in\R$ such that the the regular subdivision $S(\bw')$ of $A'$ corresponding to $\bw'=(w_1,\dots,w_{n+N})\in\R^{n+N}$ is a regular triangulation.
    Moreover, $w_{N+1},\cdots,w_{n+N}\in\R$ can be chosen to be arbitrarily large enough.
\end{lemma}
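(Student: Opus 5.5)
We will construct the lifted heights directly and exploit the homogeneity of $A$: there is $\bv$ with $\bv\ba_j=1$ for all $j$, so every $\ba_j$ lies in the affine hyperplane $\{\bv\cdot x=1\}$. The same holds for each $d\be_i$ when $\deg F=d$, which is the situation of interest. Regular subdivisions can then be read as subdivisions of the point configuration $P=\{\ba_1,\dots,\ba_N\}$ and $P'=P\cup\{d\be_1,\dots,d\be_n\}$ in that hyperplane. The polytope $\mathrm{conv}(P')$ contains $\mathrm{conv}(P)$.

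The first approach is a placing-type extension. Set $w_{N+i}=\lambda$ for all $i$ and let $\lambda\to+\infty$. We then show that the lower faces of the lifted configuration are as follows. First, every cell of $S(\bw)$ survives: take the supporting affine function ${\boldsymbol n}$ of a maximal simplex $\sigma\in S(\bw)$. It satisfies ${\boldsymbol n}\cdot\ba_j<w_j$ for $j\notin\sigma$, and ${\boldsymbol n}\cdot d\be_i<\lambda$ once $\lambda$ is large. Second, the remaining region $\mathrm{conv}(P')\setminus\mathrm{conv}(P)$ is subdivided by cells containing some $d\be_i$. Since finitely many cells are involved, a single $\lambda$ works for all of them, and the "arbitrarily large" clause is automatic.

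The main obstacle is that equal heights $w_{N+i}=\lambda$ may create non-simplicial cells among the new points. Several $d\be_i$ can become simultaneously visible from a face of $\mathrm{conv}(P)$, or dependencies among the $d\be_i$ and points of $P$ can occur. To handle this, we instead take $w_{N+i}=\lambda^{i}$, or more generally $\lambda_1\ll\lambda_2\ll\cdots\ll\lambda_n$ chosen successively. This realizes a sequential placing: add $d\be_1,\dots,d\be_n$ one at a time, each far above the previous ones. By the standard placing lemma, the subdivision obtained by placing a new point $p$ above a triangulation $T$ of a configuration $Q$ is a triangulation. Its cells are those of $T$ together with $\mathrm{conv}(F\cup\{p\})$ for the faces $F$ of $T$ on the boundary of $\mathrm{conv}(Q)$ visible from $p$. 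Visible boundary faces of a triangulation are simplices of dimension $n-2$ (in the affine hyperplane), and $p$ lies off their affine span, so each new cell is a simplex. If $p$ lies inside $\mathrm{conv}(Q)$, placing it very high leaves $T$ unchanged and $p$ is simply unused. This is still a regular triangulation, since $p$ then does not belong to any cell.

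We verify the placing lemma via the characterization \eqref{eq:normal vector} and Remark \ref{rem:2.1}. For a visible facet $F$ with supporting function ${\boldsymbol n}_F$, we modify ${\boldsymbol n}_F$ by adding a multiple of a linear form vanishing on $\mathrm{aff}(F)$ and negative on $\mathrm{conv}(Q)$. This makes it equal $\lambda$ at $p$. Every other point of $Q$ stays strictly below, because the added form is negative away from $F$, and the old inequalities were already strict. Conversely, any lower cell containing $p$ must meet $Q$ in a boundary face visible from $p$, since its supporting function is dominated on $Q$ by the old lifted function. Induction on $i=1,\dots,n$ then gives the regular triangulation $S(\bw')$ of $A'$, with all $w_{N+i}$ as large as desired.
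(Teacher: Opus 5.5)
Your argument is correct in the intended setting, where the $d\be_i$ lie on the same hyperplane $\bv\cdot x=1$ as the $\ba_j$ (that is, $\bv=(1/d,\dots,1/d)$). It takes a genuinely different route from the paper.

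\textbf{The paper's route.} The paper never constructs the new triangulation. It chooses $(w_{N+1},\dots,w_{N+n})$ off finitely many affine hyperplanes $\sum_{i\in\sigma}c_i(\sigma)w_i=0$, one for each circuit $\sigma$ of $A'$ not contained in $\{1,\dots,N\}$. These are genuine conditions on the new weights, since every $c_i(\sigma)\neq0$. If some maximal cell of $S(\bw')$ is not a simplex, it contains a circuit $\sigma$. Its supporting functional then forces $\sum_{i\in\sigma}c_i(\sigma)w_i=\sum_{i\in\sigma}c_i(\sigma)\,{\boldsymbol n}\cdot\ba'_i=0$. Hence $\sigma\subset\{1,\dots,N\}$, and restricting ${\boldsymbol n}$ to $A$ gives a dependent cell of $S(\bw)$, a contradiction.

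\textbf{What each approach buys.} The paper's argument is shorter and works directly with the conic definition \eqref{eq:normal vector}, with no homogeneity assumption on $A'$. It also shows that every generic choice of new weights works, so the ``arbitrarily large'' clause is immediate. Your placing argument yields only a specific, strongly separated choice $\lambda_1\ll\cdots\ll\lambda_n$. In exchange it gives more structure: $S(\bw)$ survives as a subcomplex of $S(\bw')$, and the new cells are joins of $d\be_i$ with visible boundary faces.

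\textbf{Two small tightenings.} First, read ``$p$ inside $\mathrm{conv}(Q)$'' as ``$p\in\mathrm{conv}(Q)$''. Boundary points, and $p$ equal to some $\ba_j$, are handled by the same convex-combination bound. Second, the converse step does not need visibility. If $\tau\ni p$ is a maximal cell, then $\tau\cap Q$ is a cell of $T$. It cannot be a maximal simplex $\sigma$, since then ${\boldsymbol n}={\boldsymbol n}_\sigma$ and ${\boldsymbol n}_\sigma(p)<\lambda$. So $\tau$ has at most $n$ elements and spans, hence is a simplex.

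\textbf{Equal heights.} Your caution here is well founded. Take $n=3$, $d=4$, $\ba_1,\dots,\ba_4=(2,1,1),(1,2,1),(1,1,2),(3,1,0)$ and $\bw=(0,0,0,1)$. Then $S(\bw)$ is a triangulation with maximal cells $\{1,2,3\}$ and $\{1,2,4\}$. But for $w_5=w_6=w_7=\lambda>0$, the functional ${\boldsymbol n}=\frac{\lambda}{4}(-3,1,1)$ cuts out the four-element cell $\{2,3,6,7\}$. So neither the paper's argument nor yours, by itself, justifies the equal choice $w_{N+i}=N$ made when the lemma is applied in Step 1 of the proof of the main theorem.
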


\begin{proof}
    Given a circuit ($=$ minimal linearly dependent set) $\sigma\subset\{1,\dots,n+N\}$, we choose $c_i(\sigma)\in\R\setminus\{ 0\}$ $(i\in\sigma)$ so that $\sum_{i\in\sigma}c_i(\sigma)\ba_i=0$.
    Note that the vector $(c_i(\sigma);i\in\sigma)$ is unique up to a scalar multiplication.
    We take $w_{N+1},\cdots,w_{n+N}\in\R$
    so that
    \begin{equation}\label{eq:choice of omega prime}
        \sum_{i\in\sigma}c_i(\sigma)w_i\neq 0
    \end{equation}
    for any circuit $\sigma\subset\{1,\dots,n+N\}$ such that $\sigma\not\subset\{1,\dots,N\}$.
    Assume that $S(\bw')$ is not a regular triangulation.
    Let $\sigma\in S(\bw')$ be a circuit, which exists by Remark \ref{rem:2.1}.
    Let $\ba'_i$ denote the $i$-th vector of $A'$ $(i=1,\dots,n+N)$.
    Then, there exists a vector ${\boldsymbol n}\in (\R^n)^*$ such that
    \begin{equation}\label{eq:regular subdivision}
        {\boldsymbol n}\cdot \ba'_i=w_i\quad\text{if}\ \  i\in\sigma\quad\text{and}\quad {\boldsymbol n}\cdot \ba'_i<w_i\quad\text{if}\ \  i\notin\sigma.
    \end{equation}
    If $\sigma\subset\{1,\dots,N\}$, it follows from \eqref{eq:regular subdivision} that $\sigma$ appears in $S(\bw)$.
    This contradicts the assumption that $S(\bw)$ is a regular triangulation.
    Therefore, we may assume that $\sigma\not\subset\{1,\dots,N\}$.
    Then, it follows that $\sum_{i\in\sigma}c_i(\sigma)w_i=0$, which clearly contradicts \eqref{eq:choice of omega prime}.
    \end{proof}

\subsection{Series solutions of GKZ systems}\label{sect:GKZ system sol}

For a vector $\bg=(\gamma_1,\ldots,\gamma_N)\in M_{\C}=\C^N$ such that $A\bg=-\bc$, we call a formal series 
\begin{equation}\label{eq:def of Gamma series}
    \varphi(\bg;\bz):=\sum_{\bu\in L_A}
    \frac{\bz^{\bu+\bg}}{\Gamma(1+\bu+\bg)}
\end{equation}
a {\it Gamma series},
where we use the following notation
\[
\bz^{\bu+\bg}:=\prod_{i=1}^Nz_i^{u_i+\gamma_i},
\quad \Gamma(1+\bu+\bg):=\prod_{i=1}^N\Gamma(1+u_i+\gamma_i).
\]
Consider a regular triangulation $T$ of $A$.
Let $T_{\max}$ denote the set of maximal elements of $T$ (hence any $\sigma\in T_{\max}$ is a simplex).
For each $\sigma\in T_{\max}$ and $\bp=(p_i)_{i\not\in \sigma}$ $(p_i\in\Z)$, 
there is a unique $\bg\in \C^N$ such that  
\begin{equation}\label{eq:char. exponent}
A\bg=-\bc\quad\text{and}\quad \gamma_i=p_i\quad (i\notin \sigma).    
\end{equation}
We write it by $\bg^{\bc}_{\sigma,\bp}$, which is often referred to as an exponent for
the GKZ system $(A,\bc)$. 
Put a set
\begin{equation}\label{eq:series solutions}
    \Phi_T:=\{\varphi(\bg^{\bc}_{\sigma,\bp};\bz)\mid \sigma\in T_{\max},\ \bp=(p_i)_{i\not\in \sigma}\in\Z^{N-n}\}.
\end{equation} 
This is a finite set, and any element is a (multi-valued) function that converges in a non-trivial open subset $U_T\subset \C^N$.
To be more precise, let ${\rm Log}:(\C^\times)^N\to \R^N$ be the map defined by 
\begin{equation}\label{eq:Log}
    {\rm Log}(\bz)=(-\log|z_1|,\dots,-\log|z_N|).
\end{equation}
Then, it follows that there exists an element $v_T\in C_T=\cap_{\sigma\in T}C_\sigma$ such that the following inclusion holds true:
\begin{equation}\label{eq:U_T is non-empty}
 {\rm Log}^{-1}(v_T+C_T)\subset U_T.
\end{equation}
Let $\O^{\text{an}}_{\bz}$ denote the germ of holomorphic functions at $z\in\C^N$.
For a left $D$-module ${\mathcal M}$,
we define the solution space at $T$ by $\Sol^{T}({\mathcal M})=\Hom_D({\mathcal M},\O^{\text{an}}_{\bz})$
for a point $\bz\in\C^N$ such that $\mathrm{Log}(\bz)=r\,\bw_T$ with some interior point $\bw_T$ of $C_T$ and $r\gg0$.
If ${\mathcal M}$ is an integrable connection in a neighborhood of the real torus 
${\rm Log}^{-1}(r\bw_T)$, then the fundamental group $\pi_1({\rm Log}^{-1}(r\bw_T),\bz)$ acts on 
$\Sol^{T}({\mathcal M})$
in the natural way,
\begin{equation}\label{eq:local monodromy}
\pi_1({\rm Log}^{-1}(r\bw_T),\bz)\lra \mathrm{GL}(\Sol^{T}({\mathcal M})),
\end{equation}
which is referred to as the {\it local monodromy} on the solution space.

\medskip

Following \cite[Section 3]{fernandez2010irregular}, we say that a parameter $\bc\in\C^n$ is {\it very generic} if for any $\sigma\in T_{\max}$ and $\bp=(p_i)_{i\not\in\sigma}$, the $j$-th element of $\bg^{\bc}_{\sigma,\bp}$ is not an integer for all $j\in\sigma$.
As a choice of vectors $\{\bp_1(\sigma),\dots,\bp_{r(\sigma)}(\sigma)\}$ so that it gives rise to a finite presentation $\Phi_T=\{\varphi(\bg^{\bc}_{\sigma,\bp};\bz);\sigma\in T_{\max},\bp=\bp_1(\sigma),\dots,\bp_{r(\sigma)}(\sigma)\}$ of \eqref{eq:series solutions},
it is convenient to choose the top-dimensional standard pairs in the sense of \cite[\S 3.2]{saito2013grobner} where $r(\sigma)$ is ${\rm vol}(\sigma)$ in view of \cite[Theorem 8.8]{sturmfels1996grobner}.
In view of the local monodromy around the coordinates, it is readily seen that the set of series \eqref{eq:series solutions} is linearly independent over $\C$ and its cardinality is $\vol(A)$.
Moreover, $\bc\in\C^n$ is semi-nonresonant in the sense of \cite[Section 5]{adolphson1994hypergeometric} if it is very generic. 
Thus, \cite[Theorem 5.15]{adolphson1994hypergeometric} proves the following assertion.

\begin{theorem}
\label{thm:eq:condition *}
Suppose that the parameter $\bc\in\C^n$ is very generic.
Then, $\dim\Sol^T(M_A(\bc))=\vol(A)$, and 
the set $\Phi_T$ in \eqref{eq:series solutions} forms a $\C$-basis of $\Sol^{T}(M_A(\bc))$.
\end{theorem}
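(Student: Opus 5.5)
The plan is to prove the theorem in three steps: first show that each element of $\Phi_T$ is a solution of $M_A(\bc)$ on $U_T$, then show that the elements of $\Phi_T$ are linearly independent, and finally show that $\Phi_T$ spans $\Sol^T(M_A(\bc))$ by a dimension count.

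\textbf{Step 1: each Gamma series is a solution.} Fix $\sigma\in T_{\max}$ and $\bp$, and write $\bg=\bg^{\bc}_{\sigma,\bp}$.
\begin{itemize}
\item The Euler operators \eqref{eq:GKZ.gens} annihilate every monomial $\bz^{\bu+\bg}$ with $\bu\in L_A$, because $A(\bu+\bg)=-\bc$.
\item For a toric generator \eqref{eq:toric.gens} attached to $\bv\in L_A$, apply $\partial_i^{v_i}$ to $\bz^{\bu+\bg}/\Gamma(1+\bu+\bg)$. Using $\Gamma(1+s)=s\Gamma(s)$, this gives $\bz^{\bu+\bg-v_i\be_i}/\Gamma(1+\bu+\bg-v_i\be_i)$. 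This holds formally, with the convention $1/\Gamma(\text{nonpositive integer})=0$.
\item The two products in the generator therefore send the series to the same series shifted by $\bv$. Since the sum runs over the lattice $L_A$, the shifted series coincides with the original one, so the generator annihilates $\varphi(\bg;\bz)$.
\end{itemize}

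\textbf{Convergence on $U_T$.} Because $\gamma_i=p_i\in\Z$ for $i\notin\sigma$, the factor $1/\Gamma(1+u_i+p_i)$ kills all terms except those with $u_i+p_i\geq 0$ for $i\notin\sigma$. The surviving sum is a series in monomials $\bz^{\bu}$, with $\bu$ ranging over a translate of a cone in $L_A$. Stirling's formula bounds the coefficients by $C^{|\bu|}$; homogeneity ($|\bu|=0$) keeps the factorial growth balanced. Convergence then reduces to the condition that $\bw\cdot\bu>0$ on that cone for $\bw$ deep in $C_\sigma$. This is exactly what \eqref{eq:U_T is non-empty} encodes.

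\textbf{Step 2: linear independence.}
\begin{itemize}
\item Series attached to different $\sigma$, or to inequivalent $\bp$, have distinct exponents modulo $\Z^N$. Very genericity makes $\gamma_j$ non-integral for $j\in\sigma$, so these exponent classes separate the series.
\item Consequently the series lie in distinct eigenspaces of the local monodromy \eqref{eq:local monodromy} around the coordinate tori. Where the eigenvalues coincide, the leading monomials $\bz^{\bg}$ differ.
\item This gives independence, and the count $|\Phi_T|=\sum_{\sigma}\vol(\sigma)=\vol(A)$ follows from the standard-pair description in \cite[Theorem 8.8]{sturmfels1996grobner}.
\end{itemize}

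\textbf{Step 3: dimension count.} It remains to show $\dim\Sol^T(M_A(\bc))\le\vol(A)$; combined with Step 2 this gives equality. This is the main obstacle, and here I would invoke \cite[Theorem 5.15]{adolphson1994hypergeometric}. That result states that for semi-nonresonant $\bc$ the holonomic rank of $M_A(\bc)$ equals $\vol(A)$. Very generic parameters are semi-nonresonant, as noted above.

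If I wanted to avoid the citation, I would instead bound the rank from above by a Gröbner degeneration. The initial ideal of $H_A(\bc)$ with respect to the weight $(-\bw,\bw)$ has the same holonomic rank as $H_A(\bc)$ for regular holonomic homogeneous $A$. For generic $\bc$, that rank is at most the degree of $\mathrm{in}_{\bw}(I_A)$, which is $\vol(A)$.

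Finally, since $M_A(\bc)$ is regular holonomic and is an integrable connection near ${\rm Log}^{-1}(r\bw_T)$, the solution space at such a point has dimension equal to the rank. Hence $\Phi_T$ spans $\Sol^T(M_A(\bc))$ and is a basis.
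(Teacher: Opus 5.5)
Your proposal is correct and follows essentially the same route as the paper. Both arguments use the same three ingredients:
\begin{itemize}
\item linear independence of $\Phi_T$, via distinct characters of the local monodromy of the real torus;
\item the count $\sharp\Phi_T=\vol(A)$, via top-dimensional standard pairs \cite[Theorem 8.8]{sturmfels1996grobner};
\item equality with $\dim\Sol^T(M_A(\bc))$, by passing from very generic to semi-nonresonant and applying \cite[Theorem 5.15]{adolphson1994hypergeometric}.
\end{itemize}
Your Step 1 (that the Gamma series are convergent solutions) and your alternative Gr\"obner-degeneration bound are extras that the paper either takes for granted or does not need.
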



Suppose that $\bc=(c_1,\ldots,c_n)$ is a real vector, so that any $\bg^{\bc}_{\sigma,\bp}$ is a real vector.
Fix a vector $\bw\in C_T\subset M_\R^*$.
For a vector $\ba\in M_\R=\R^N$ and a monomial $\bz^{\ba}=\prod_{i=1}^Nz_i^{a_i}$, 
we define its $\bw$-weight by $\bw\cdot\ba$ where 
the dot product denotes the linear map $M^*_\R\ot M_\R\to\R$.
For $k\in\R$, let $\mathbf{in}_{\leq k}(\varphi(\bg^{\bc}_{\sigma,\bp};\bz))$ denote
the sum of the terms corresponding to monomials with $\bw$-weights less than or equal to $k$.

\begin{lemma}\label{lem:finite sum}
For any $\sigma\in T_{\max}$ and $\bp=(p_i)_{i\notin\sigma}\in\Z^{N-n}$, 
$\mathbf{in}_{\leq k}(\varphi(\bg^{\bc}_{\sigma,\bp};\bz))$ is a finite sum. 
\end{lemma}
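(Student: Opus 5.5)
The plan is to describe the support of the Gamma series $\varphi(\bg^{\bc}_{\sigma,\bp};\bz)$ explicitly and then use the defining inequalities \eqref{eq:normal vector} of $C_\sigma$ to show that the $\bw$-weight is a positive combination of finitely many integers that are bounded below. Throughout, $\sigma\in T_{\max}$ is a simplex, and $\bw\in C_T\subset C_\sigma$ is fixed.

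\emph{Step 1 (support).} A term indexed by $\bu\in L_A$ carries the coefficient $1/\Gamma(1+\bu+\bg)$, where $\bg=\bg^{\bc}_{\sigma,\bp}$. This coefficient vanishes as soon as some $1+u_i+\gamma_i$ is a non-positive integer. For $i\notin\sigma$ we have $\gamma_i=p_i\in\Z$ and $u_i\in\Z$, so a non-zero term requires
\[
u_i\geq -p_i\qquad(i\notin\sigma).
\]
Moreover, since $\{\ba_i\}_{i\in\sigma}$ is a basis of $M'_\R$, an element $\bu\in L_A$ is uniquely determined by $(u_i)_{i\notin\sigma}$, via $\bu_\sigma=-A_\sigma^{-1}A_{\bar\sigma}\bu_{\bar\sigma}$. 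Hence the map $\bu\mapsto(u_i)_{i\notin\sigma}$ is injective on $L_A$.

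\emph{Step 2 (weight formula).} Since $\bw\in C_\sigma$, there is ${\boldsymbol n}\in(M'_\R)^*$ with ${\boldsymbol n}\cdot\ba_i=w_i$ for $i\in\sigma$ and $\lambda_i:=w_i-{\boldsymbol n}\cdot\ba_i>0$ for $i\notin\sigma$. For $\bu\in L_A$ we have $A\bu=0$, so ${\boldsymbol n}\cdot A\bu=0$ and therefore
\[
\bw\cdot\bu=\bw\cdot\bu-{\boldsymbol n}\cdot A\bu=\sum_{i=1}^N(w_i-{\boldsymbol n}\cdot\ba_i)u_i=\sum_{i\notin\sigma}\lambda_iu_i .
\]
The weight of the monomial $\bz^{\bu+\bg}$ is thus $\bw\cdot\bg+\sum_{i\notin\sigma}\lambda_iu_i$, where $\bw\cdot\bg$ is a constant.

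\emph{Step 3 (finiteness).} The condition $\bw\cdot(\bu+\bg)\leq k$ gives $\sum_{i\notin\sigma}\lambda_iu_i\leq k-\bw\cdot\bg$. Combined with $u_i\geq-p_i$ and $\lambda_i>0$, this bounds each $u_i$ ($i\notin\sigma$) from above:
\[
\lambda_iu_i\leq k-\bw\cdot\bg+\sum_{j\notin\sigma,\,j\neq i}\lambda_jp_j .
\]
So only finitely many integer vectors $(u_i)_{i\notin\sigma}$ occur, and by the injectivity in Step 1 only finitely many $\bu$ occur. Hence $\mathbf{in}_{\leq k}(\varphi(\bg^{\bc}_{\sigma,\bp};\bz))$ is a finite sum.

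The only delicate point is the strict positivity $\lambda_i>0$. It is exactly the strict inequality in \eqref{eq:normal vector}, and it is available because $\bw\in C_T=\bigcap_{\tau\in T}C_\tau\subset C_\sigma$.
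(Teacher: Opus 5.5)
Your proof is correct and is essentially the paper's argument: the same support restriction $u_j\geq -p_j$ for $j\notin\sigma$, combined with the fact that the weight of $\bz^{\bu+\bg}$ is a positive combination of the $u_j$ for $j\notin\sigma$. The paper gets this positivity from the Gale-dual statement $\pi_A(\bw)\in\bar C_\sigma=\sum_{j\notin\sigma}\R_{>0}\boldsymbol{g}_j$, via \eqref{eq:bar C sigma gale}, and then appeals to compactness; you extract the same coefficients $\lambda_j>0$ directly from the normal vector $\boldsymbol n$ and bound each $u_j$ explicitly, which makes your version more self-contained and a bit cleaner.
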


\begin{proof}
Write $\bg=\bg^{\bc}_{\sigma,\bp}$ for simplicity.
By the definition \eqref{eq:def of Gamma series}, the indices $\bu$ in $\varphi(\bg;\bz)$
run over the elements of $L_A$ such that $u_j+p_j\geq 0$ for all $j\not\in\sigma$.
Since the $\bw$-weight of $\bz^{\bg+\bu}$ is $\bw\cdot(\bg+\bu)$,  the indices $\bu$ 
in $\mathbf{in}_{\leq k}(\varphi(\bg;\bz))$ runs over the set
\begin{equation}\label{eq:finite set}
\{\bu\in L_A\mid \pi_A(\bw)\cdot\bu\leq k-\bw\cdot\bg\text{ and } \boldsymbol{g}_j\cdot\bu\geq -p_j\,(j\not\in\sigma)\}
\end{equation}
where $\{\boldsymbol{g}_j\}$ are the Gale dual.
By \eqref{eq:bar C sigma gale}, $\pi_A(\bw)$ belongs to
the $(N-n)$-dimensional open cone $\bar C_\sigma=\sum_{j\not\in \sigma} \R_{>0}\boldsymbol{g}_j$.
Therefore the finiteness of the set \eqref{eq:finite set} follows from the fact that
 for any $R_1,R_2\in\R$,
\[
\{\bu\in L_{A,\R}\mid \pi_A(\bw)\cdot\bu\leq R_1\text{ and } \boldsymbol{g}\cdot\bu\geq R_2
\text{ for all }\boldsymbol{g}\in\bar C_\sigma\}
\]
is a compact set.

\end{proof}


When the condition in Theorem \ref{thm:eq:condition *} is violated, the set of solutions \eqref{eq:series solutions} may only span a subspace of  $\Sol^{T}(M_A(\bc))$ .
However, the space of solutions can be recovered by taking a suitable limit, which we call a {\it SST-limit}.

\begin{definition}\label{defn:SST-limit defn}
Let $\bc \in\Q^n$ be an arbitrary vector.
Let $\bc'\in \Q^n$ be a generic vector so that $\bc(\varepsilon):=\bc+\varepsilon\bc'$ fulfills the condition in Theorem \ref{thm:eq:condition *} for any $0<|\varepsilon|\ll 1$.
We define the SST-limit 
\begin{equation}\label{eq:SST-def}
    \lim_{\varepsilon\to0}\Sol^T(M_A(\bc(\varepsilon)))_\C\subseteq\Sol^{T}(M_A(\bc))
\end{equation}
to be the subspace of all the limits
$\lim_{\varepsilon\to 0}\sum_{i} C_i(\varepsilon)\varphi_i(\bz)$,
where the sum is a finite sum and $\varphi_i(\bz)\in\Sol^T(M_A(\bc(\varepsilon)))$ and $C_i(\varepsilon)
\in\C(\!(\varepsilon)\!)=\C[\![\varepsilon]\!][\varepsilon^{-1}]$ are chosen so that the limit exists.
\end{definition}
\begin{remark}\label{remark:SST-limit defn}
The local monodromy \eqref{eq:local monodromy} on $\Sol^T(M_A(\bc(\varepsilon)))$ 
factors through a finite quotient by Theorem \ref{thm:eq:condition *}.
Let 
\[
\Sol^T(M_A(\bc(\varepsilon)))=\bigoplus_\chi\Sol^T(M_A(\bc(\varepsilon)))(\chi)
\]
be the simultaneous eigenspace decomposition of the local monodromy \eqref{eq:local monodromy} by the characters $\chi$.
Let $\{\varphi^\chi_i(\bz)\}_i$ be a $\C$-basis of
the $\chi$-component. Then a limit
$\lim_{\varepsilon\to 0}\sum_\chi\sum_{i} C_i(\varepsilon)\varphi^\chi_i(\bz)$ exists
if and only if so does 
$\lim_{\varepsilon\to 0}\sum_{i} C_i(\varepsilon)\varphi^\chi_i(\bz)$ 
for each $\chi$.
\end{remark}
The following theorem is a consequence of \cite[Theorem 5.15]{adolphson1994hypergeometric} 
and the proof of \cite[Theorem 3.5.1]{saito2013grobner}.
\begin{theorem}\label{thm:SST limits span the solution space}
    If $\bc$ is in the interior of ${\rm pos}(A):=\sum_i\R_{\geq0}\ba_i$, then 
the holonomic rank of $M_A(\bc)$ is $\vol(A)$, and 
    the equality holds in \eqref{eq:SST-def}.
    In particular, the SST-limit does not depend on the choice of $\bc'$.
\end{theorem}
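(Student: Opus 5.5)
The plan is to prove two inequalities for the SST-limit space and to get the rank statement from them. Write $V:=\lim_{\varepsilon\to0}\Sol^T(M_A(\bc(\varepsilon)))_\C\subseteq\Sol^T(M_A(\bc))$. I will show that $\dim V\geq\vol(A)$ by a lattice argument over $\C[\![\varepsilon]\!]$. I will show that $\dim\Sol^T(M_A(\bc))\leq\vol(A)$ by quoting \cite[Theorem 5.15]{adolphson1994hypergeometric}, which is where the hypothesis that $\bc$ lies in the interior of ${\rm pos}(A)$ is used. Together with $V\subseteq\Sol^T(M_A(\bc))$ this gives equality everywhere. In particular the holonomic rank is $\vol(A)$ and $V$ is the whole solution space. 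Since the whole solution space does not involve $\bc'$, the SST-limit is independent of $\bc'$.

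\textbf{Lower bound.} For $0<|\varepsilon|\ll1$ the parameter $\bc(\varepsilon)$ is very generic, so by Theorem \ref{thm:eq:condition *} the Gamma series $\varphi(\bg^{\bc(\varepsilon)}_{\sigma,\bp};\bz)$ in $\Phi_T$, with $\bp$ running over the standard-pair representatives, form a basis of $\Sol^T(M_A(\bc(\varepsilon)))$ with $\vol(A)$ elements.

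1. Each exponent $\bg^{\bc(\varepsilon)}_{\sigma,\bp}$ depends affinely on $\varepsilon$. After multiplying each Gamma series by a suitable power of $\varepsilon$ (to clear poles coming from $1/\Gamma$ at nonpositive integers), the terms are holomorphic in $\varepsilon$ near $0$.
2. I will check, using the estimates behind Lemma \ref{lem:finite sum} and \eqref{eq:U_T is non-empty}, that these series converge locally uniformly in $(\varepsilon,\bz)$ on a common domain near ${\rm Log}^{-1}(r\bw_T)$. Then every element of $\Sol^T(M_A(\bc(\varepsilon)))$ with $\C(\!(\varepsilon)\!)$-coefficients is a function $f(\varepsilon,\bz)$ meromorphic in $\varepsilon$.
3. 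Let $W$ be the $\C(\!(\varepsilon)\!)$-span of the basis; it has dimension $\vol(A)$. Let $\Lambda\subset W$ be the set of $f$ holomorphic at $\varepsilon=0$. This is a $\C[\![\varepsilon]\!]$-submodule, and $\varepsilon^{-k}\Lambda\supseteq$ the span of the basis for $k\gg0$. It is finitely generated, being contained in $\varepsilon^{-k'}\C[\![\varepsilon]\!]$-span of the basis by linear independence of the limits of the leading parts. Hence $\Lambda$ is free of rank $\vol(A)$.
4. The limit map $\Lambda\to V$, $f\mapsto f(0,\bz)$, has kernel exactly $\varepsilon\Lambda$. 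Indeed, if $f(0,\bz)\equiv0$ then $f/\varepsilon$ is again holomorphic in $\varepsilon$, so it lies in $\Lambda$. Hence $\Lambda/\varepsilon\Lambda\hookrightarrow V$, and $\dim V\geq\vol(A)$.
5. To keep the bookkeeping clean, by Remark \ref{remark:SST-limit defn} I will run steps 3 and 4 separately in each eigenspace of the local monodromy.

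\textbf{Upper bound and main obstacle.} The upper bound $\mathrm{rank}\,M_A(\bc)\leq\vol(A)$ for interior $\bc$ is the step I expect to be delicate. Rank jumps are a genuine phenomenon for non-Cohen--Macaulay toric rings, so the hypothesis on $\bc$ must be exactly what makes the result of Adolphson applicable. I would try first to verify that an interior $\bc$ satisfies the hypothesis of \cite[Theorem 5.15]{adolphson1994hypergeometric} needed for the rank formula. This parallels how semi-nonresonance was used for Theorem \ref{thm:eq:condition *}. Failing that, I would use the Gröbner-deformation argument in the proof of \cite[Theorem 3.5.1]{saito2013grobner}: the initial ideal with respect to the weight $(-\bw,\bw)$, with $\bw\in C_T$ generic, bounds the dimension of the space of series solutions converging on $U_T$ by $\vol(A)$, provided the fake indicial ideal has the expected degree at $\bc$. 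Once both bounds are in place, the chain
\[
\vol(A)\leq\dim V\leq\dim\Sol^T(M_A(\bc))\leq\vol(A)
\]
completes the proof.
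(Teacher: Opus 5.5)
Your proposal is correct and follows the paper's route: the paper cites Adolphson's Theorem 5.15 for the rank and the limiting argument in the proof of \cite[Theorem 3.5.1]{saito2013grobner} for $\dim V\geq\vol(A)$, and your $\C[\![\varepsilon]\!]$-lattice argument is a fleshed-out version of the latter. Two small fixes: $1/\Gamma$ is entire, so the Gamma series are already holomorphic in $\varepsilon$ and there are no poles to clear; and the inclusion $\Lambda\subseteq\varepsilon^{-k'}\C[\![\varepsilon]\!]$-span is best justified by the finite order at $\varepsilon=0$ of a non-identically-vanishing evaluation (or Wronskian) determinant, not by linear independence of limits, which can fail at resonant $\bc$.
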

Any element of the SST-limit is of the following form
\begin{equation}\label{eq:log series}
    \varphi(\bz)=\sum_{i_1,\ldots,i_N\geq0}g_{i_1\ldots i_N}(\bz)(\log z_1)^{i_1}\cdots(\log z_N)^{i_N}
\end{equation}
where the sum is a finite sum and $g_{i_1\ldots i_N}(\bz)$ are multi-lateral series of the form $\sum_{\bu\in L_A}c_{\bu}z^{\bu+\bg}$ for some $\bg\in\C^N$.
We set $\mathbf{in}_{\leq k}(\varphi(\bz)):=\sum\mathbf{in}_{\leq k}(g_{i_1\ldots i_N}(\bz))(\log z_1)^{i_1}\cdots(\log z_N)^{i_N}$.
The following statement is an immediate consequence of Lemma \ref{lem:finite sum}.
\begin{lemma}
    \label{prop:finite truncation}
    Suppose that $\bc$ is in the interior of ${\rm pos}(A)$.
    Then, $\mathbf{in}_{\leq k}(\varphi(\bz))$ is a finite sum
for any $\varphi(\bz)\in \lim_{\varepsilon\to0}\Sol^T(M_A(\bc(\varepsilon)))_\C$ and $k\in\R$.
\end{lemma}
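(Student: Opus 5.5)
By Theorem \ref{thm:SST limits span the solution space}, every $\varphi(\bz)$ in the SST-limit is a limit $\lim_{\varepsilon\to0}\sum_i C_i(\varepsilon)\varphi_i(\bz)$ with $\varphi_i\in\Sol^T(M_A(\bc(\varepsilon)))$. I will rewrite each $\varphi_i$ in the basis $\Phi_T$ of Theorem \ref{thm:eq:condition *}. Since $\Phi_T$ is finite, $\varphi$ is the limit of a finite $\C(\!(\varepsilon)\!)$-linear combination of Gamma series
\[
\varphi\bigl(\bg^{\bc(\varepsilon)}_{\sigma,\bp};\bz\bigr),\qquad \sigma\in T_{\max},\ \bp=\bp_1(\sigma),\dots,\bp_{r(\sigma)}(\sigma).
\]

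Next I will expand each Gamma series in $\varepsilon$. The exponent $\bg^{\bc(\varepsilon)}_{\sigma,\bp}=\bg^{\bc}_{\sigma,\bp}+\varepsilon\bg'$ depends affinely on $\varepsilon$, so
\[
\bz^{\bu+\bg(\varepsilon)}=\bz^{\bu+\bg}\exp\Bigl(\varepsilon\sum_j\gamma'_j\log z_j\Bigr),
\]
and $1/\Gamma(1+\bu+\bg(\varepsilon))$ is entire in $\varepsilon$. Extracting the $\varepsilon^{k}$ coefficients expresses $\varphi(\bz)$ as a finite sum of logarithm monomials $(\log z_1)^{i_1}\cdots(\log z_N)^{i_N}$ times series of the form
\[
\sum_{\bu}c_{\bu}\,\bz^{\bu+\bg^{\bc}_{\sigma,\bp}},
\]
where $c_{\bu}$ is a derivative in $\varepsilon$ at $0$ of $1/\Gamma(1+\bu+\bg(\varepsilon))$. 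The index set of each such series is contained in the support of the original Gamma series, namely those $\bu\in L_A$ with $u_j+p_j\ge0$ for $j\notin\sigma$. It is no larger, because $1/\Gamma$ vanishes to appropriate order at nonpositive integers, and its derivatives also vanish there or are controlled.

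Finally, the $\bw$-weight of $\bz^{\bu+\bg}$ does not depend on $\varepsilon$ once we look at the $\varepsilon$-coefficients, and the truncation $\mathbf{in}_{\leq k}$ commutes with taking coefficients. So $\mathbf{in}_{\le k}$ of each coefficient series $g_{i_1\ldots i_N}$ is supported on indices in the set \eqref{eq:finite set}, which Lemma \ref{lem:finite sum} shows is finite.

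The main subtlety is the support claim in the second step. Derivatives of $1/\Gamma$ at nonpositive integers need not vanish, so the support may grow. I would handle this by not relying on it: the set $\{\bu: \pi_A(\bw)\cdot\bu\le R\}\cap\{\bu:\boldsymbol g_j\cdot\bu\ge -p_j-M\}$ is still compact for any fixed shift $M$, so finiteness holds. The remaining task is to justify that coefficients vanish outside such a shifted cone. This follows because $1/\Gamma(1+\bu+\bg(\varepsilon))$ has a zero of order at least $\#\{j\notin\sigma: u_j+p_j<0\}$ in $\varepsilon$ (here I use that $\gamma'_j=0$ for $j\notin\sigma$, so the perturbed coordinates are exactly the $\sigma$-ones). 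Terms with $u_j+p_j<0$ for $j\notin\sigma$ therefore do not appear at all, and the support is exactly that of Lemma \ref{lem:finite sum}.
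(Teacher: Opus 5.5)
Your proposal is correct and is essentially the paper's argument; the paper only says the lemma is an immediate consequence of Lemma~\ref{lem:finite sum}. You supply the details: expand the finite $\C(\!(\varepsilon)\!)$-combination of Gamma series in $\varepsilon$, check that every coefficient series $g_{i_1\ldots i_N}$ is supported in an index set of the form \eqref{eq:finite set}, and apply the compactness argument there. One correction to your phrasing: since $\gamma_j=p_j$ is fixed for $j\notin\sigma$, the factor $1/\Gamma(1+u_j+p_j)$ vanishes identically in $\varepsilon$ (not merely to some finite order) when $u_j+p_j<0$, and that is why no pole of $C_i(\varepsilon)$ can enlarge the support.
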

\begin{definition}
Let $\bc(\varepsilon)=\bc+\varepsilon\bc'$ be as in Definition \ref{defn:SST-limit defn}.
Let $\{\bg^{\bc(\varepsilon)}_i\}$ be exponents so that
$\Phi_T=\{\varphi(\bg^{\bc(\varepsilon)}_i;\bz);1\leq i\leq \vol(A)\}$.
For a $\Q$-subalgebra $R\subset \C$, we define the SST-limit
\begin{equation}\label{eq:SST-limit over R}
\lim_{\varepsilon\to0}\Sol^T(M_A(\bc(\varepsilon)))_R\subset \Sol^T(M_A(\bc))
\end{equation} 
over $R$
to be the $R$-linear span of the limits
$\lim_{\varepsilon\to0}\sum_iC_i(\varepsilon)\varphi(\bg_i^{\bc(\varepsilon)};\bz)$
for
$C_i(\varepsilon)\in R(\!(\varepsilon)\!)$.
\end{definition}
\begin{remark}
The authors do not know whether the SST-limit over $R$ is independent of the choice of
$\bc'$.
\end{remark}

\begin{definition}\label{defn:psi-algebra}
    For a positive integer $M\geq1$ and a rational number $e\in \Q$, we define 
    $\PG(M,e)$ to be the $\Q$-subalgebra of $\C$
    generated by special values
\[
\prod_{j=1}^r\Gamma\left(q_j\right)^{m_j},\quad
\psi^{(0)}(p_{0})-\psi^{(0)}(1),\quad
\psi^{(k)}(p_{k})
\]   
where $q_j,p_{k}\in\frac{1}{M}\Z_{\geq1}$, $r\in\Z_{\geq0}$,
$k\in\Z_{\geq1}$, $m_j\in\Z$ and they are subject to constraints 
$\sum_{j=1}^rm_jq_j\equiv e\mod \Z$.
Here, $\psi^{(i)}(z):=\dfrac{d^i}{dz^i}\dfrac{\Gamma'(z)}{\Gamma(z)}$
are the polygamma functions, cf. \cite[Chap.5]{olver2010nist}.
\end{definition}

\begin{theorem}\label{thm:coefficients}
Suppose that $\bc\in\Q^n$ be in the interior of ${\rm pos}(A)$.
Let $\bc(\varepsilon)=\bc+\varepsilon\bc'$ be as in Definition \ref{defn:SST-limit defn}.
Let $\{\bg_i^{\bc(\varepsilon)}\}$ be exponents so that
$\Phi_T=\{\varphi(\bg^{\bc(\varepsilon)}_i;\bz)\}_i$.
Let $N_{A,\bc,T}$ be the smallest positive integer
such that $\bg_i^{\bc}\in(N_{A,\bc,T})^{-1} \Z^N$ 
for all $i$.
Let 
$R:=\PG(N_{A,\bc,T},-\bv\cdot\bc)$
be the $\Q$-algebra defined in Definition \ref{defn:psi-algebra}
where $\bv$ is as in \eqref{eq:dual vector}.
Then, the SST-limit 
\begin{equation}\label{eq:SST-limit}
\lim_{\varepsilon\to0}\Sol^T(M_A(\bc(\varepsilon)))_R
\end{equation}
over $R$ includes
 a $\C$-basis of $\Sol^T(M_A(\bc))$.
Furthermore, for any element
 \[
\sum_{i_1,\ldots ,i_N}g_{i_1\ldots i_N}(\bz)(\log z_1)^{i_1}\cdots(\log z_N)^{i_N} \in \lim_{\varepsilon\to0}\Sol^T(M_A(\bc(\varepsilon)))_R,
 \]
all coefficients 
of $g_{i_1\ldots i_N}(\bz)$ lie in the ring $R$.
The same statement is true if we replace $R$ with $R_{K}:=\Image(K\ot R\to\C)$
for a $\Q$-subalgebra $K\subset\C$.
\end{theorem}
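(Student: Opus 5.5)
Our plan is to construct the required basis by hand, working one local-monodromy eigenspace at a time as in Remark \ref{remark:SST-limit defn}. Then we read off the coefficients by expanding ratios of Gamma functions in $\varepsilon$.

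First, the exponents. Each $\bg^{\bc(\varepsilon)}_i=\bg^{\bc}_i+\varepsilon\bg'_i$ is affine in $\varepsilon$, and $\bg'_i$ is determined by $A\bg'_i=-\bc'$ together with $\gamma'_j=0$ for $j\notin\sigma$. The coefficient of $\bz^{\bu+\bg_i^{\bc(\varepsilon)}}$ in $\varphi(\bg_i^{\bc(\varepsilon)};\bz)$ is $\prod_j\Gamma(1+u_j+\gamma_{i,j}+\varepsilon\gamma'_{i,j})^{-1}$. Write $1+u_j+\gamma_{i,j}=q$ with $q\in N^{-1}\Z$, where $N=N_{A,\bc,T}$. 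If $q\notin\Z_{\le 0}$, expand $1/\Gamma(q+\varepsilon s)$ as $\Gamma(q)^{-1}\exp\bigl(-\sum_{k\ge0}\psi^{(k)}(q)(\varepsilon s)^{k+1}/(k+1)!\bigr)$. If $q\in\Z_{\le0}$, use the reflection formula $1/\Gamma(q+\varepsilon s)=(-1)^{q}\,\varepsilon s\,\Gamma(1-q-\varepsilon s)\cdots$, which vanishes to first order. In both cases, after reducing to positive arguments with the recursion $\Gamma(x+1)=x\Gamma(x)$, the Taylor coefficients in $\varepsilon$ are polynomials (with rational coefficients, and the rational numbers $\gamma'_{i,j}$) in $\Gamma(q)^{\pm1}$, in $\psi^{(0)}(p)-\psi^{(0)}(1)$, and in $\psi^{(k)}(p)$. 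Since $\psi^{(0)}(1)$ enters only through differences, it can be absorbed, either by pulling out a common factor such as $e^{\varepsilon\psi^{(0)}(1)\sum\gamma'}$, or because $\sum_j\gamma'_{i,j}=-\bv\cdot\bc'$ is independent of $\bu$ by homogeneity. This is the point to check carefully. The Gamma monomial attached to the term $\bu$ is $\prod_j\Gamma(q_j)^{-1}$ with $\sum_j q_j\equiv\sum_j\gamma_{i,j}=-\bv A\bg_i/1\cdot$ adjusted, i.e.\ $\sum\gamma_{i,j}=\bv\cdot(A\bg_i)=-\bv\cdot\bc$ modulo $\Z$. This gives the constraint $\sum m_jq_j\equiv e$ with $e=-\bv\cdot\bc$, up to sign conventions that we still need to align with Definition \ref{defn:psi-algebra}. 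Finally, $\bz^{\varepsilon\bg'_i}=\exp(\varepsilon\sum_j\gamma'_{i,j}\log z_j)$ produces the logarithms, with rational coefficients.

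Second, the basis. Fix an eigenspace and let $\varphi_1,\dots,\varphi_s$ be the Gamma series of $\bc(\varepsilon)$ whose exponents reduce to the same class modulo $\Z^N$ at $\varepsilon=0$. Normalize each $\varphi_l$ by dividing by the Gamma factor of a chosen leading term, times a suitable power $\varepsilon^{-k_l}$, so that it has a finite limit. We then take $\Q$-linear combinations with coefficients in $\Q(\!(\varepsilon)\!)$, i.e.\ Taylor expansions of $\prod\varepsilon^{a}$ times rational functions, chosen so that all negative powers of $\varepsilon$ cancel. The leading Gamma normalizations are monomials $\prod\Gamma(q)^{\pm1}$. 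We only divide by a term's own Gamma factor, so every coefficient of the normalized series becomes a ratio of Gamma values whose difference of arguments still satisfies the congruence. Hence the admissible $C_i(\varepsilon)$ lie in $R(\!(\varepsilon)\!)$.

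The main obstacle is showing that these $R$-rational combinations still produce $\vol(A)$ linearly independent limits. Theorem \ref{thm:SST limits span the solution space} guarantees this over $\C$. To get it over $R$, we follow the proof of \cite[Theorem 3.5.1]{saito2013grobner}: there the limits are obtained by taking $\varepsilon$-derivatives, i.e.\ Taylor coefficients, of a single normalized combination. That construction only uses rational linear algebra on the leading exponents, which are rational, so it works over $R$. The claim about the coefficients of $g_{i_1\ldots i_N}$ then follows from the first step, since every Taylor coefficient lies in $R$ and the logarithms carry rational coefficients. Replacing $R$ by $R_K$ changes nothing, because the coefficients $C_i$ may simply be taken in $K\otimes R$.
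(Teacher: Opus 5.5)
Your route is the paper's: expand $1/\Gamma(1+\bu+\bg+\varepsilon\boldsymbol{\delta})$ in $\varepsilon$ through polygamma values, strip $\psi^{(0)}(1)$ via the common unit $e(\varepsilon)=\exp(-\psi^{(0)}(1)\varepsilon|\boldsymbol{\delta}|)$ with $e(0)=1$, and work one monodromy class at a time. Your argument for the second assertion (coefficients lie in $R$) goes through along these lines, with two small corrections.

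\begin{itemize}
\item What makes $e(\varepsilon)$ a common factor is that $|\boldsymbol{\delta}_k|=-\bv\cdot\bc'$ is the same for every series in the combination. Independence of $\bu$ is automatic, so your ``either \dots\ or'' is really a single argument.
\item The sign you left open is harmless. The coefficient monomials $\prod_j\Gamma(q_j)^{-1}$ have $\sum m_jq_j\equiv\bv\cdot\bc$. Since $N_{A,\bc,T}\,\bv\cdot\bc\in\Z$ and $R$ is closed under products, each such monomial is a product of generators with $\sum m_jq_j\equiv-\bv\cdot\bc$.
\end{itemize}

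The genuine gap is at the step you yourself call the main obstacle. Citing the proof of \cite[Theorem 3.5.1]{saito2013grobner} with ``rational linear algebra on the leading exponents'' is not an argument, and it names the wrong reason. Whether pole-cancelling combinations with $\vol(A)$ independent limits exist with coefficients in a given ring depends on where \emph{all} coefficients of the series live, not on the exponents. Moreover, as you describe them, the normalized coefficients are ``ratios of Gamma values'' lying in $R$. Cancelling poles would then require dividing by elements of $R$, which is not a field. So nothing you wrote yields $C_k(\varepsilon)\in R(\!(\varepsilon)\!)$ together with a full basis of limits.

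The missing observation, which is what the paper's factorization \eqref{eq:lemma 2.5.eq2} encodes, is that suitably normalized series are defined over $\Q$.

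\begin{enumerate}
\item Multiply $\varphi_l$ by $\Gamma(1+\bu_0+\bg_l+\varepsilon\boldsymbol{\delta}_l)$ for some $\bu_0$ in its support, keeping the $\varepsilon$-dependence. Each coefficient of the resulting $\tilde\varphi_l$ is $\Gamma(1+\bu_0+\bg_l+\varepsilon\boldsymbol{\delta}_l)/\Gamma(1+\bu+\bg_l+\varepsilon\boldsymbol{\delta}_l)$. This is a product of Pochhammer factors, hence an element of $\Q(\varepsilon)$ with uniformly bounded pole order. Also $\bz^{\varepsilon\boldsymbol{\delta}_l}$ expands with rational coefficients in the $\log z_j$.
\item The normalizers are units of $\C(\!(\varepsilon)\!)$, so every $\C$-limit has the form $\lim\sum_lD_l\tilde\varphi_l$ with $D_l\in\C(\!(\varepsilon)\!)$.
\item For bounded pole order, the vanishing of all negative powers of $\varepsilon$ is a linear system with rational coefficients in finitely many unknowns, and the limit depends $\Q$-linearly on those unknowns. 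Hence limits with $D_l\in\Q(\!(\varepsilon)\!)$ already span $\Sol^T(M_A(\bc))$ by Theorem \ref{thm:SST limits span the solution space}; these limits even have rational coefficients.
\item Set $C_l:=e(\varepsilon)\,\Gamma(1+\bu_0+\bg_l+\varepsilon\boldsymbol{\delta}_l)\,D_l$. Then $\sum_lC_l\varphi_l=e(\varepsilon)\sum_lD_l\tilde\varphi_l$ has the same limit, since $e(0)=1$.
\item The factor $e(\varepsilon)\Gamma(1+\bu_0+\bg_l+\varepsilon\boldsymbol{\delta}_l)$ is the product of three pieces: a rational multiple of a Gamma monomial $\prod_j\Gamma(q_j)$ with $q_j\in N_{A,\bc,T}^{-1}\Z_{\geq1}$ and $\sum_jq_j\equiv-\bv\cdot\bc$; a rational function of $\varepsilon$; and the exponential of an $\varepsilon$-series whose coefficients are rational polynomials in the values $\widetilde\psi^{(k)}_{\Z}(q)$. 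Hence $C_l\in R(\!(\varepsilon)\!)$, which gives the first assertion, with the congruence matching Definition \ref{defn:psi-algebra} exactly.
\end{enumerate}
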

\begin{proof}
We begin with a formula
\[
\Gamma(a+\varepsilon)/\Gamma(a)=
\exp\left(\psi(a)\varepsilon+\frac12\psi^{(1)}(a)\varepsilon^2+\frac16\psi^{(2)}(a)\varepsilon^3+\cdots\right)
\]
for $a\not\in \Z_{\geq0}$ and $0<|\varepsilon|<1$.
Let $\bg^{\bc(\varepsilon)}=\bg+\varepsilon\boldsymbol{\delta}$ be an exponent
for the GKZ-system $M_A(\bc(\varepsilon))$.
We first describe the series
\[
\varphi({\bg^{\bc(\varepsilon)}};\bz)=
\sum_{\bu\in L_A}
\frac{\bz^{\bu+\bg+\varepsilon\boldsymbol{\delta}}}
{\Gamma(1+\bu+\bg+\varepsilon\boldsymbol{\delta})}
\]
by an expansion of $\varepsilon$.
If $\gamma_i\not\in \Z_{<0}$, then
\begin{align*}
\frac{1}
{\Gamma(1+u_i+\gamma_i+\varepsilon\delta_i)}
=&
\frac{1}{\Gamma(1+u_i+\gamma_i)}
\exp\left(-\sum_{n=1}^\infty\frac{\psi^{(n-1)}(1+u_i+\gamma_i)}{n!}(\varepsilon\delta_i)^n\right)
 \\
=
&\frac{1}{\Gamma(1+\gamma_i)}
\exp\left(-\sum_{n=1}^\infty\frac{\psi^{(n-1)}(1+\gamma_i)}{n!}(\varepsilon\delta_i)^n\right)\times
 \\
&
\frac{\Gamma(1+\gamma_i)}{\Gamma(1+u_i+\gamma_i)}
\exp\left(\sum_{n=1}^\infty\frac{\psi^{(n-1)}(1+\gamma_i)-\psi^{(n-1)}(1+u_i+\gamma_i)}{n!}(\varepsilon\delta_i)^n\right).
\end{align*}
We note that the last term belongs to the ring $\Q[\![\varepsilon]\!]$,
since $\Gamma(\gamma+k)/\Gamma(\gamma)\in\Q^\times$
and $\psi^{(i)}(\gamma+k)-\psi^{(i)}(\gamma)\in \Q$ 
for any $k\in\Z$ and $\gamma\in\Q\setminus\Z$.
If $u_i+\gamma_i\in \Z_{<0}$, then
\begin{equation}\label{eq:1/Gamma 1}
\begin{split}
\frac{1}
{\Gamma(1+u_i+\gamma_i+\varepsilon\delta_i)}
=&\frac{\varepsilon\delta_i(\varepsilon\delta_i-1)\cdots(\varepsilon\delta_i+1+u_i+\gamma_i)}{\Gamma(1+\varepsilon\delta_i)}\\
=&\varepsilon\delta_i(\varepsilon\delta_i-1)\cdots(\varepsilon\delta_i+1+u_i+\gamma_i)
\exp\left(-\sum_{n=1}^\infty\frac{\psi^{(n-1)}(1)}{n!}(\varepsilon\delta_i)^n\right)
\end{split}
\end{equation}
and, 
if $u_i+\gamma_i\in \Z_{\geq0}$, then
\begin{equation}\label{eq:1/Gamma 2}
\begin{split}
\frac{1}
{\Gamma(1+u_i+\gamma_i+\varepsilon\delta_i)}
=&\frac{1}{(\varepsilon\delta_i+1)\cdots(\varepsilon\delta_i+u_i+\gamma_i)\Gamma(1+\varepsilon\delta_i)}\\
=&\frac{1}{(\varepsilon\delta_i+1)\cdots(\varepsilon\delta_i+u_i+\gamma_i)}
\exp\left(-\sum_{n=1}^\infty\frac{\psi^{(n-1)}(1)}{n!}(\varepsilon\delta_i)^n\right).
\end{split}
\end{equation}
To sum up the above results, we introduce the following notation:
\[
\Gamma_\Z(z):=\begin{cases}
\Gamma(z)&(z\not\in\Z)\\
1&(z\in\Z),
\end{cases}
\quad
\psi^{(i)}_{\Z}(z):=\begin{cases}
\psi^{(i)}(z)&(z\not\in\Z)\\
\psi^{(i)}(1)&(z\in\Z)
\end{cases}
\]
and
\[
\widetilde\psi^{(i)}_\Z(z):=\begin{cases}
\psi^{(0)}_\Z(z)-\psi^{(0)}_\Z(1)&(i=0)\\
\psi^{(i)}_\Z(z)&(i\geq1).
\end{cases}
\]
The formulas \eqref{eq:1/Gamma 1} and \eqref{eq:1/Gamma 2} imply that there are formal series $\phi_{\bu,\bg_{\bp},\boldsymbol{\delta}_{\bp}}(\varepsilon)\in \Q[\![\varepsilon]\!]$ such that an equality
\begin{equation}
\varphi({\bg^{\bc(\varepsilon)}};\bz)
= \frac{\exp(-\psi^{(0)}(1)\varepsilon|\boldsymbol{\delta}|)}{\Gamma_\Z(1+\bg)}
\exp\left(-\sum_{n=1}^\infty \sum_{i=1}^N\frac{\widetilde\psi^{(n-1)}_\Z(1+\gamma_i)}{n!}(\varepsilon\delta_i)^n\right) 
\sum_{\bu\in L_A}\phi_{\bu,\bg,\boldsymbol{\delta}}(\varepsilon)
\bz^{\bu+\bg+\varepsilon\boldsymbol{\delta}}
\label{eq:lemma 2.5.eq1}
\end{equation}
holds true.

\medskip

We turn to the proof of the theorem.
Let $\bg_k^{\bc(\varepsilon)}=\bg_k+\varepsilon\boldsymbol{\delta}_k$,
and $\bg_k=(\gamma_{k,i})$ and $\boldsymbol{\delta}_k=(\delta_{k,i})$.
By Remark \ref{remark:SST-limit defn}, it is enough to consider the limits among the series $\varphi({\bg^{\bc(\varepsilon)}_1};\bz),\ldots,
\varphi({\bg^{\bc(\varepsilon)}_s};\bz)$ for $\bg_1\equiv\cdots\equiv\bg_{s}\mod \Z^N$.
We note that $|\boldsymbol{\delta_k}|$
does not depend on $k$ by the assumption \eqref{eq:dual vector} together with the fact $A\boldsymbol{\delta}_k=\bc'$.
We put
$e(\varepsilon):=\exp(-\psi^{(0)}(1)\varepsilon|\boldsymbol{\delta}_k|)$.
Noticing that $\widetilde\psi^{(n-1)}_\Z(\gamma+k)-\widetilde\psi^{(n-1)}_\Z(\gamma)\in\Q$ and
$\Gamma_\Z(\gamma+k)/\Gamma_\Z(\gamma)\in\Q^\times$
for any $\gamma\in\Q$ 
and $k\in\Z$,
one can rewrite \eqref{eq:lemma 2.5.eq1} for $\bg^{\bc(\varepsilon)}=\bg^{\bc(\varepsilon)}_k$ as 
\begin{equation}
\varphi({\bg^{\bc(\varepsilon)}_k};\bz)
= \frac{e(\varepsilon)}{\Gamma_\Z(1+\bg_1)}
\exp\left(-\sum_{n=1}^\infty \sum_{i=1}^N\frac{\widetilde\psi_\Z^{(n-1)}(1+\gamma_{1,i})}{n!}(\varepsilon\delta_{k,i})^n\right) 
\sum_{\bu\in L_A}\widetilde\phi_{\bu,\bg_k,\boldsymbol{\delta}_k}(\varepsilon)
\bz^{\bu+\bg_k+\varepsilon\boldsymbol{\delta}_k},
\label{eq:lemma 2.5.eq2}
\end{equation}
where $\widetilde\phi_{\bu,\bg_k,\boldsymbol{\delta}_k}(\varepsilon)\in\Q[\![\varepsilon]\!]$ .
Therefore, to obtain the $\vol(A)$-dimensional space of solutions, 
it is enough to take the SST-limits 
\begin{equation}
\label{eq:lemma 2.5.eq3}
\lim_{\varepsilon\to0}\sum_k C_k(\varepsilon)
\varphi({\bg^{\bc(\varepsilon)}_k};\bz)=\sum_{k,i}\sum_{\bu}c_{\bu,k,i}\bz^{\bu+\bg_k}
(\log \bz^{\boldsymbol{\delta}_k})^i
\end{equation}
where 
$C_k(\varepsilon)$ are  
Laurent polynomials whose coefficients lie in a ring
\begin{equation}
\label{eq:lemma 2.5.eq4}
R':=\Q\left[\widetilde\psi^{(n-1)}_\Z(1+\gamma_{1,i})
\right]_{1\leq n\leq m,\, 1\leq i\leq N}
\end{equation}
with some $m\gg1$.
This is a subring of
$R=\PG(N_{A,\bc,T},-\bv\cdot\bc)$.
This shows that the SST-limit $\lim_{\varepsilon\to0}\Sol^T(M_A(\bc(\varepsilon)))_R$ over $R$ contains a $\C$-basis of
$\lim_{\varepsilon\to0}\Sol^T(M_A(\bc(\varepsilon)))_\C$.
Moreover, we may delete $e(\varepsilon)$ from \eqref{eq:lemma 2.5.eq2}
to compute \eqref{eq:lemma 2.5.eq3}
as $e(0)=1$,
and then one sees that all the coefficients $c_{\bu,k,i}$ in 
\eqref{eq:lemma 2.5.eq3} lie in $\Gamma_\Z(1+\bg_1)^{-1}R'\subset R$.
This completes the proof for $R$.
The last statement for $R_K$ is obvious from the above discussion.
\end{proof}

The following theorem summarizes \cite[Theorem 3.4.1 and Equation 2.11]{matsubara2022global} and plays a key role in this paper.
\begin{theorem}[Connection formula]\label{thm:connection formula}
Suppose that $\bc\in\C^n$ is very generic.
Let $T_1, T_2$ be regular triangulations of $A$ which share a facet.
Then, there is a path $\gamma$ from a point $\bz_1\in U_{T_1}$ to
a point $\bz_2\in U_{T_2}$ with the following property: let
\[
\rho_\gamma:\Sol^{T_1}(M_A(\bc))\os{\cong}{\lra}\Sol^{T_2}(M_A(\bc))
\]
be an isomorphism induced by the analytic continuation along $\gamma$.
The matrix representation 
of $\rho_\gamma$ with respect to the basis $\Phi_{T_1}$ and $\Phi_{T_2}$ takes values in the ring 
$$
{\Q}\left[e^{\pi\ii/M},e^{\pm\pi\ii(\gamma_{\sigma,\bp}^{\bc})_i},\left(\frac{\pi}{\sin\pi(\gamma_{\sigma,\bp}^{\bc})_i}\right)^{\pm 1};\sigma\in T_{1,\max}\cup T_{2,\max},\bp=(p_i;i\notin\sigma),i\in\sigma\right],
$$
where $M$ is the maximum of volumes of simplices $\sigma\in T_{1,\max}\cup T_{2,\max}$.
Moreover, the entries of the matrix are explicitly computable. 
\end{theorem}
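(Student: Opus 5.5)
The plan is to reduce to a single flip (a circuit modification) and compute the analytic continuation there by a Mellin--Barnes integral. Two regular triangulations sharing a facet differ by a modification along one circuit $Z=Z_+\sqcup Z_-$. Write $\sigma_+ $, $\sigma_-$ for the maximal simplices involved. The simplices of $T_1$ and $T_2$ outside the circuit region coincide. For those simplices the corresponding Gamma series in $\Phi_{T_1}$ and $\Phi_{T_2}$ are literally the same series, and both converge on a neighbourhood of a path crossing the common wall. So the connection matrix is the identity on that block, and the work concentrates on the series attached to simplices $\sigma\supset Z\setminus\{j\}$.

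\textbf{Step 1 (choice of path and fibering).} Let $\bu_Z\in L_A$ be the primitive circuit vector. I choose the path $\gamma$ so that it moves only the variable $\bz^{\bu_Z}$, keeping all other Gale coordinates fixed deep inside both cones $C_{T_1}$ and $C_{T_2}$. Concretely, I take $\mathrm{Log}(\bz)$ along a segment crossing the common facet, lifted with a fixed argument. Along this path each Gamma series of a simplex in the flip region splits as a sum over the cosets of $\Z\bu_Z$ in $L_A$. Each inner sum is a one-variable series in $x=\bz^{\bu_Z}$ of generalized hypergeometric type ${}_pF_{q}$ with $p=q+1$, or of the confluent-free balanced type thanks to homogeneity $|\bu_Z|=0$. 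The summation index is shifted by the outer index.

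\textbf{Step 2 (one-variable connection via Mellin--Barnes).} For each inner sum I write the Barnes integral
\[
\int \frac{\Gamma(-s)\,\Gamma(1+s)\,x^{s+\alpha}}{\prod_{i}\Gamma(1+\gamma_i+u_{Z,i}s)}\,ds
\]
with the reflection formula $\Gamma(-s)\Gamma(1+s)=-\pi/\sin\pi s$. Closing the contour to the left instead of the right gives the expansion at $x=\infty$. That expansion is a sum over the simplices on the other side of the circuit, with residues at the poles coming from the factors with $u_{Z,i}<0$ after applying reflection. The resulting coefficients are ratios of products of $\pi/\sin\pi(\cdot)$ evaluated at the exponents $(\gamma^{\bc}_{\sigma,\bp})_i$, $i\in\sigma$. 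Roots of unity $e^{\pi\ii k/M}$ appear when one splits the poles $s\in\frac{1}{u_{Z,i}}(\Z-\gamma_i)$ into residue classes; the denominators $u_{Z,i}$ divide volumes of simplices, hence the bound $M$. The factors $e^{\pm\pi\ii\gamma}$ come from choosing the branch of $x^{s}$ along $\gamma$. Very genericity guarantees that all poles are simple and no sine vanishes.

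\textbf{Step 3 (reassembly).} I sum the inner identities over the outer index. Convergence of both sides on the respective domains $U_{T_1}$, $U_{T_2}$, from \eqref{eq:U_T is non-empty} and Lemma \ref{lem:finite sum}-type estimates, lets me identify each resulting sum with an element of $\Phi_{T_2}$. Theorem \ref{thm:eq:condition *} then shows that the matrix expressing $\rho_\gamma(\Phi_{T_1})$ in the basis $\Phi_{T_2}$ is exactly the collection of coefficients found in Step 2.

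The main obstacle is Step 2 in the non-unimodular case. There the circuit entries $|u_{Z,i}|>1$, so the Mellin--Barnes integrand has Gamma factors with argument $u_{Z,i}s$. The poles must be grouped into residue classes using the Gauss multiplication formula, which is where products over roots of unity arise. I would first handle this by substituting $s=t/\ell$ with $\ell=\mathrm{lcm}|u_{Z,i}|$, then apply multiplication formulas to reduce to ordinary Barnes integrals. Finally I would check that the resulting constants lie in the stated ring.
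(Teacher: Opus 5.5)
\textbf{Where the proposal stands.} Your overall route is the right one: reduce to one modification along a circuit $Z$, isolate the one-variable series in $x=\bz^{\bu_Z}$, continue it by a Mellin--Barnes integral, and read off the $T_2$-series from the residues on the other side. It is the same kind of argument as behind the cited result \cite{matsubara2022global}; compare how Theorem \ref{thm:MB} and \cite[Section 3.3]{matsubara2022global} are used in the proof of Theorem \ref{thm:analytic continuation of p.Dwork}. However, Step 2 fails as written, and not only in the non-unimodular case.

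\textbf{Step 2: the displayed integral cannot work.}
\begin{itemize}
\item In $\Gamma(-s)\Gamma(1+s)x^{s+\alpha}/\prod_{i\in Z}\Gamma(1+\gamma_i+u_{Z,i}s)$ every factor $1/\Gamma(\cdot)$ is entire. So the only poles are those of $-\pi/\sin\pi s$ at the integers. Those $k$ with $\gamma_j+u_{Z,j}k<0$ (where $j$ is the index of $Z$ missing from $\sigma$) are cancelled. The surviving poles are exactly the $k$ indexing the terms of your series, so there is nothing to pick up on the other side.
\item Applying the reflection formula to the factors with $u_{Z,i}<0$ does not change this. The identity $1/\Gamma(1+\gamma_i+u_{Z,i}s)=-\Gamma(-\gamma_i-u_{Z,i}s)\sin\pi(\gamma_i+u_{Z,i}s)/\pi$ just rewrites the same entire function.
\item By Stirling, the integrand has modulus of order $e^{\pi(\Lambda-1)|t|-t\arg x}$ on $\mathrm{Re}\,s=c$, where $\Lambda=\sum_{u_{Z,i}>0}u_{Z,i}\geq 2$ for every circuit with at least three elements. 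So the integral diverges for every value of $\arg x$, already for the Gauss circuit $\bu_Z=(1,-1,-1,1)$.
\end{itemize}

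\textbf{The missing step.} For each $i$ with $u_{Z,i}<0$, replace $\sin\pi(\gamma_i+u_{Z,i}s)$ by $e^{\pm\pi\ii u_{Z,i}s}\sin\pi\gamma_i$, which agrees with it at integer $s$. This gives a genuinely different meromorphic function with the following properties:
\begin{itemize}
\item it has the same residues at the integers;
\item it has genuine poles on the left, coming from $\Gamma(-\gamma_i-u_{Z,i}s)$, and these never meet the integers by very genericity;
\item it is integrable on vertical lines for $\arg x$ in an open sector of opening $2\pi$ fixed by the choice of signs.
\end{itemize}
This sign choice is where the factors $e^{\pm\pi\ii\gamma}$ really enter. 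The sector condition also constrains the admissible path $\gamma$, which you left unspecified. By contrast, non-unimodularity is harmless: $-\pi/\sin\pi s$ already reproduces $1/\Gamma(1+\gamma_j+u_{Z,j}k)$ for any $u_{Z,j}$, and Theorem \ref{thm:MB} achieves the same with $\Gamma(s)\prod\sin$.

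\textbf{Step 3 needs an estimate you have not supplied.} Lemma \ref{lem:finite sum} concerns formal truncations, and \eqref{eq:U_T is non-empty} concerns convergence of each Gamma series on its own domain. Neither shows that the outer sum over $L_A/\Z\bu_Z$ of the continued one-variable integrals converges locally uniformly along the part of the path near the wall. There $|x|$ crosses the radius of convergence, and neither $\Phi_{T_1}$ nor $\Phi_{T_2}$ converges. Since the parameters $\gamma_i$ shift with the outer index, you need two things:
\begin{itemize}
\item Stirling-type bounds on the Mellin--Barnes integrals, uniform in the outer index and strong enough to be summed against the small outer variables;
\item a contour separating the two families of poles simultaneously for all outer indices, deformed if necessary as in Remark \ref{rem:integration contour}.
\end{itemize}
This is the content of \cite[Section 3.3]{matsubara2022global}. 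Without it, you only know that two expansions converge on $U_{T_1}$ and $U_{T_2}$ separately, not that the second is the analytic continuation of the first along $\gamma$. So the identification of the connection matrix in Step 3 is not yet justified.
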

The following lemma is an immediate consequence of Theorem \ref{thm:connection formula},
which we shall use in the proof of the main theorem (Theorem \ref{thm:main}).
\begin{lemma}\label{prop:connection formula}
Suppose that $\bc\in\Q^n$ is the interior of ${\rm {pos}}(A)$.
Let $\bc(\varepsilon)=\bc+\varepsilon\bc'$ be as in Definition \ref{defn:SST-limit defn}.
Let $T_1, T_2$ be regular triangulations of $A$ that share a facet.
Let $N_{T_1,T_2}$ be the least common multiple of $N_{A,\bc,T_1}$ and $N_{A,\bc,T_2}$, and put 
$R:=\PG(N_{T_1,T_2},-\bv\cdot\bc)$ (see Definition \ref{defn:psi-algebra}). Let $\widetilde R\subset\C$ be the $\Q$-algebra generated by
$R$ and the ring in Theorem \ref{thm:connection formula}.
Then 
\[
\rho_\gamma\bigg(\lim_{\varepsilon\to0}\Sol^{T_1}(M_A(\bc(\varepsilon)))_{\widetilde R}\bigg)=
\lim_{\varepsilon\to0}\Sol^{T_2}(M_A(\bc(\varepsilon)))_{\widetilde R}.
\]
\end{lemma}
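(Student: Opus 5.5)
Since $\rho_\gamma$ is analytic continuation along $\gamma$ and depends only on the path, the plan is to do the continuation at the level of the deformed parameter $\bc(\varepsilon)$ and then pass to the limit $\varepsilon\to0$. I will fix a small punctured disc $0<|\varepsilon|\ll1$ on which $\bc(\varepsilon)$ is very generic. For such $\varepsilon$, Theorem \ref{thm:connection formula} gives a matrix $P(\varepsilon)$ with
\[
\rho_\gamma\,\varphi(\bg^{\bc(\varepsilon)}_{i};\bz)=\sum_j P_{ij}(\varepsilon)\,\varphi(\bg'^{\bc(\varepsilon)}_{j};\bz),
\]
where $\{\bg_i\}$ and $\{\bg'_j\}$ are the exponents for $T_1$ and $T_2$. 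Because the solutions depend holomorphically on the parameter along the fixed path $\gamma$, taking $\lim_{\varepsilon\to0}$ commutes with $\rho_\gamma$. Hence for any $C_i(\varepsilon)\in\widetilde R(\!(\varepsilon)\!)$ for which the limit exists,
\[
\rho_\gamma\Big(\lim_{\varepsilon\to0}\sum_iC_i\varphi_i\Big)=\lim_{\varepsilon\to0}\sum_j\Big(\sum_iC_iP_{ij}\Big)\varphi'_j .
\]
So it suffices to show that each entry $P_{ij}(\varepsilon)$ lies in $\widetilde R(\!(\varepsilon)\!)$. Then the left side lies in the $T_2$ SST-limit over $\widetilde R$, which gives the inclusion $\subseteq$.

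The key step is expanding the entries of $P(\varepsilon)$ in $\varepsilon$. By Theorem \ref{thm:connection formula}, the entries are polynomials over $\Q$ in three kinds of quantities.
\begin{itemize}
\item The constant $e^{\pi\ii/M}$ lies in $\widetilde R$.
\item The exponentials $e^{\pm\pi\ii(\gamma_i+\varepsilon\delta_i)}=e^{\pm\pi\ii\gamma_i}\exp(\pm\pi\ii\varepsilon\delta_i)$ expand with coefficients in $\Q[e^{\pm\pi\ii\gamma_i},\pi\ii]$.
\item The factors $(\pi/\sin\pi(\gamma_i+\varepsilon\delta_i))^{\pm1}$ are handled by the reflection formula $\pi/\sin\pi x=\Gamma(x)\Gamma(1-x)$. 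Expanding via $\Gamma(a+\varepsilon)/\Gamma(a)=\exp(\sum\psi^{(n-1)}(a)\varepsilon^n/n!)$ as in the proof of Theorem \ref{thm:coefficients}, the coefficients lie in the algebra generated by $\Gamma(\gamma_i)\Gamma(1-\gamma_i)$, its inverse, and the polygamma values $\psi^{(k)}$. Here the $\psi^{(0)}$ terms combine into differences of the form $\psi^{(0)}(p)-\psi^{(0)}(1)$.
\end{itemize}
When $\gamma_i\in\Z$, $\sin$ has a simple zero, which produces a pole $\varepsilon^{-1}$ with rational leading coefficient times powers of $\pi$. That is still a Laurent series over $\widetilde R$.

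The main obstacle is that the coefficients of this expansion must land in $\widetilde R$, which is built from $R=\PG(N_{T_1,T_2},-\bv\cdot\bc)$. Definition \ref{defn:psi-algebra} imposes the constraint $\sum m_jq_j\equiv-\bv\cdot\bc$ on the allowed Gamma products, and $\Gamma(\gamma)\Gamma(1-\gamma)$ has total weight $1\equiv0$, so it is not obviously in $R$. I would circumvent this by noting that $\widetilde R$ is defined to contain the ring of Theorem \ref{thm:connection formula}, hence already contains $(\pi/\sin\pi\gamma_i)^{\pm1}=(\Gamma(\gamma_i)\Gamma(1-\gamma_i))^{\pm1}$ at $\varepsilon=0$. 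The higher coefficients are then that value times $\Q$-polynomials in $\psi^{(k)}(\gamma_i)$, $\psi^{(k)}(1-\gamma_i)$ and $\pi$. The $\psi^{(k)}$ values with $k\ge1$ lie in $R$ by definition, with the case $\gamma\le0$ reduced to positive arguments by integer shifts, which change them only by rationals. The $\psi^{(0)}$ terms occur as $\psi^{(0)}(\gamma)-\psi^{(0)}(1-\gamma)=-\pi\cot\pi\gamma$, which lies in the ring of Theorem \ref{thm:connection formula}. For integral $\gamma_i$ one uses the Laurent expansion of $\pi/\sin\pi\varepsilon$, whose coefficients are rational multiples of powers of $\pi^2$, and these lie in $\widetilde R$ because $\pi\ii$ does.

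The reverse inclusion $\supseteq$ follows by the same argument applied to $\rho_{\gamma}^{-1}=\rho_{\gamma^{-1}}$. Its matrix is $P(\varepsilon)^{-1}$, which is the connection matrix for the reversed path and has entries in the same kind of ring. Alternatively, $\det P(\varepsilon)$ is a product of such factors, so $P(\varepsilon)^{-1}$ again has entries in $\widetilde R(\!(\varepsilon)\!)$.
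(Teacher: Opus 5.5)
Your argument for the inclusion $\subseteq$ is the one the paper has in mind when it calls this lemma an immediate consequence of Theorem~\ref{thm:connection formula}. You continue at $\bc(\varepsilon)$ along a fixed path, expand the connection coefficients in $\varepsilon$ (integral exponent coordinates give simple poles with rational leading coefficient), and pass to the limit. Like the paper, you tacitly assume that $\gamma$ is independent of $\varepsilon$ and that each entry is a single $\Q$-polynomial in the generators for all small $\varepsilon$.

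One step needs a line of proof: $\pi\ii\in\widetilde R$. It holds because $2\pi\ii=(\pi/\sin\pi\gamma)(e^{\pi\ii\gamma}-e^{-\pi\ii\gamma})$ for any non-integral coordinate $\gamma$ of an exponent at $\bc$, and $\pi^2=6\psi^{(1)}(1)\in R$ in any case. If all such coordinates were integral, $\widetilde R$ need not contain $\pi\ii$, and the odd powers of $\pi\ii$ coming from $e^{\pm\pi\ii\varepsilon\delta}$ would be a problem. Incidentally, the polygamma detour is unnecessary. The addition formula $\sin\pi(\gamma+\varepsilon\delta)=\sin\pi\gamma\cos\pi\varepsilon\delta+\cos\pi\gamma\sin\pi\varepsilon\delta$ directly puts the coefficients in $\Q[(\pi/\sin\pi\gamma)^{\pm1},\cos\pi\gamma,\pi^2]$.

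The genuine gap is the reverse inclusion $\supseteq$. Theorem~\ref{thm:connection formula} only says that \emph{some} path from $U_{T_1}$ to $U_{T_2}$ has a connection matrix with entries in the stated ring. Applied with $T_1,T_2$ interchanged, it gives some path $\gamma'$ from $U_{T_2}$ to $U_{T_1}$. Nothing identifies $\gamma'$ with $\gamma^{-1}$, and nothing you cite controls the monodromy of the loop $\gamma'\gamma$. So the claim that $P(\varepsilon)^{-1}$ ``is the connection matrix for the reversed path and has entries in the same kind of ring'' is not a consequence of the theorem.

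Your fallback, that $\det P(\varepsilon)$ is a product of generators, is also asserted without argument. Moreover, $\widetilde R$ is not a field, so a Laurent series is invertible in $\widetilde R(\!(\varepsilon)\!)$ only if its leading coefficient is a unit of $\widetilde R$, which you would have to verify. Closing the gap requires more input. One option is the explicit Mellin--Barnes construction in \cite{matsubara2022global}, showing that continuing $\Phi_{T_2}$ back along $\gamma^{-1}$ also yields coefficients in the same ring. Another is an explicit computation of $\det P(\varepsilon)$. Alternatively, state and prove only $\subseteq$, which is all that Step~2 of the proof of Theorem~\ref{thm:main} actually uses.
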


\subsection{GKZ-systems and Periods of Projective Hypersurfaces}
\label{sect:GKZ-Hypersurface}
Let
$\bx=(x_1,\ldots,x_n)$ be the homogeneous coordinates of $\P^{n-1}_K={\rm Proj}(K[x_1,\dots,x_n])$ over a field $K$ of
characteristic zero.
Let $m\geq n$ and $d>0$ be integers, let $z_i$ be an indeterminate for $1\leq i\leq m$ and let
\[
F=F(\bx;\bz)=\sum_{i=1}^mz_i \bx^{\ba_i}
\]
be a homogeneous polynomial of degree $d$ such that the hypersurface $F=0$ is 
smooth over a non-empty open subscheme $S\subset \Spec K[z_1,\ldots,z_m]$.
We put $A=\begin{bmatrix}
\ba_1&\cdots&\ba_m    
\end{bmatrix}$ and assume that the rank of $A$ is $n$.
Note that this assumption is automatically satisfied by the assumption that $F$ is generically smooth if $d\geq3$.
Let $\P^{n-1}_S={\rm Proj}(S[x_1,\dots,x_n])$ and let $X\subset \P^{n-1}_S$ be the hypersurface defined by $F$. Put $U=\P^{n-1}_S\setminus X$,
\begin{equation}\label{eq:Omega}
\Omega:=\sum_{i=1}^n(-1)^ix_idx_1\wedge\cdots\wh{dx_i}\cdots\wedge dx_n,
\quad\text{  and  }\quad \frac{d\bx}{\bx}:=(x_1\cdots x_n)^{-1}\Omega.    
\end{equation}
Let $H^i_\dR(U/S)$ be the $i$-th relative de Rham cohomology group.
For any $\bc=(c_1,\dots,c_n)\in\Z_{>0}^n$ that satisfies $|\bc|:=c_1+\cdots+c_n=dr$ with $r\in \Z_{\geq1}$, 
put
\begin{equation}\label{eq:omega.bc}
\omega_{\bc}:=\frac{x^{\bc}}{F^r}\frac{d\bx}{\bx}\in\varGamma(U,\Omega^{n-1}_{U/S})
    \end{equation}
a rational form. The form $\omega_{\bc}$ defines a cohomology class in $H^{n-1}_\dR(U/S)$, which
we write by the same symbol for simplicity.
Let $\mathcal{D}_S$ denote the sheaf of rings of differential operators on $S$.
We endow $H^{n-1}_{\dR}(U/S)$ with a structure of a left $\mathcal{D}_S$-module via Gauss-Manin derivative \cite[Section 9.2]{Voisin}. 
Let $D_K=K\langle z_i,\partial_i;i=1,\dots,N\rangle$ denote the Weyl algebra on 
$\Spec K[z_1,\dots,z_m]$.
We define the GKZ ideal $H_A(\bc)$ of $D_K$ in the same way as in \S \ref{sect:GKZ system}. 
For a differential operator $P\in D_K$ and a local section $\omega\in H^{n-1}_{\dR}(U/S)$, we write $P\bullet \omega$ for the action via Gauss-Manin derivative.
\begin{theorem}[{\cite[Theorem 2.7]{GKZ}}]
$H_A(\bc)\bullet\omega_{\bc}=0\in H^{n-1}_\dR(U/S)$.
\end{theorem}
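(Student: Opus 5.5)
We have to show that every generator of $H_A(\bc)$ annihilates $\omega_{\bc}$ in $H^{n-1}_\dR(U/S)$, where the $\mathcal D_S$-action is the Gauss--Manin connection. On the rational forms $\omega=g(\bx,\bz)\,d\bx/\bx$ with $g$ homogeneous of degree $0$ in $\bx$, the Gauss--Manin derivative $\partial_i$ acts simply by differentiating the coefficient $g$ in $z_i$, because $\Omega$ does not involve $\bz$. We therefore check the two families of generators separately: first at the level of forms, and then up to exact forms.

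\emph{Toric operators.} Here we compute directly. Repeated differentiation gives
\[
\prod_i\partial_i^{k_i}\,\frac{\bx^{\bc}}{F^r}=(-1)^{|k|}(r)_{|k|}\,\frac{\bx^{\bc+\sum_i k_i\ba_i}}{F^{r+|k|}},
\]
where $(r)_j=r(r+1)\cdots(r+j-1)$ is the Pochhammer symbol. Take $\bu\in L_A$ and write $\bu=\bu^+-\bu^-$.

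\begin{itemize}
\item Homogeneity gives $|\bu^+|=|\bu^-|$.
\item $A\bu=0$ gives $\sum_i u^+_i\ba_i=\sum_j u^-_j\ba_j$.
\end{itemize}

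Hence $\partial^{\bu^+}$ and $\partial^{\bu^-}$ produce identical forms, so the toric generators kill $\omega_{\bc}$ already at the level of forms, with no exact correction needed.

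\emph{Euler operators.} This is the part that requires exactness. Combining the computation above with the identity $\sum_j a_{ij}z_j\bx^{\ba_j}=x_i\partial_{x_i}F$, we get
\[
\Bigl(\sum_j a_{ij}z_j\partial_j\Bigr)\frac{\bx^{\bc}}{F^r}=-r\,\frac{\bx^{\bc}\,x_i\partial_{x_i}F}{F^{r+1}}.
\]
We must show that $\bigl(-r\,\bx^{\bc}x_iF_{x_i}/F^{r+1}+c_i\,\bx^{\bc}/F^r\bigr)\,d\bx/\bx$ is exact on $U$. The candidate is
\[
x_i\partial_{x_i}\!\left(\frac{\bx^{\bc}}{F^r}\right)=\frac{c_i\bx^{\bc}}{F^r}-\frac{r\,\bx^{\bc}x_iF_{x_i}}{F^{r+1}},
\]
which is exactly the expression in question. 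So it suffices to show that for any function $h$ on the torus that is homogeneous of degree $0$, the form $\bigl(x_i\partial_{x_i}h\bigr)\,d\bx/\bx$ is exact on $U$.

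To prove this, contract $\Omega$-type forms with the vector fields $x_i\partial_{x_i}$. I would use the standard identity that $x_i\partial_{x_i}h\;d\bx/\bx$ equals $d$ of the contraction $\iota_{x_i\partial_{x_i}}(h\,d\bx/\bx)$, up to a multiple of the Euler contraction. Working in affine coordinates $y_k=x_k/x_n$, the form $d\bx/\bx$ becomes $\pm\, dy_1/y_1\wedge\cdots\wedge dy_{n-1}/y_{n-1}$ and $x_i\partial_{x_i}$ becomes $y_i\partial_{y_i}$ (with the Euler vector field acting trivially on degree-$0$ functions). The claim is then just
\[
d\Bigl(h\,\textstyle\bigwedge_{k\ne i}dy_k/y_k\Bigr)=\pm\,y_i\partial_{y_i}h\;\textstyle\bigwedge_k dy_k/y_k.
\]

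The main obstacle is the pole along the coordinate hyperplanes. The primitive $h\bigwedge_{k\ne i}dy_k/y_k$ is only regular on the torus, not on all of $U$, so the exactness argument above does not directly apply on $U$. The way around this uses the positivity $\bc\in\Z_{>0}^n$. Write the primitive globally as $\pm\,\iota_E\iota_{x_i\partial_{x_i}}\bigl(\bx^{\bc}F^{-r}\,dx_1\wedge\cdots\wedge dx_n/(x_1\cdots x_n)\bigr)$, where $E$ is the Euler vector field. Then $\bx^{\bc}/(x_1\cdots x_n)=\bx^{\bc-\boldsymbol 1}$ is a polynomial because $c_k\ge 1$, and contraction with $x_i\partial_{x_i}$ cannot introduce a denominator. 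So the primitive is a regular form on $U$, and the Euler generator annihilates $\omega_{\bc}$ in $H^{n-1}_\dR(U/S)$.

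With both families of generators handled, $H_A(\bc)\bullet\omega_{\bc}=0$. The bookkeeping of signs in the contraction identity is the only remaining routine check.
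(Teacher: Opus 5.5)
Your proposal is correct. The toric generators kill $\omega_{\bc}$ already at the level of forms, since $A\bu^+=A\bu^-$ and $|\bu^+|=|\bu^-|$. Each Euler generator sends $\omega_{\bc}$ to $(x_i\partial_{x_i}h)\,d\bx/\bx$ with $h=\bx^{\bc}/F^r$, and this equals exactly $d\bigl(\iota_E\iota_{x_i\partial_{x_i}}(\bx^{\bc-\boldsymbol{1}}F^{-r}\,dx_1\wedge\cdots\wedge dx_n)\bigr)$: Cartan's formula, together with $Eh=0$ and $[E,x_i\partial_{x_i}]=0$, settles the signs with no correction term. That primitive is regular on $U$ precisely because $\bc\in\Z^n_{>0}$.

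The paper gives no proof of its own and only cites GKZ. Your argument is the standard one behind that result (differentiation of the integrand for the toric part, torus homogeneity / Stokes for the Euler part), carried out directly in relative de Rham cohomology.
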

Suppose that $K=\C$. Thanks to the above theorem, one has a $\C$-linear map
\[
\Sol^T(H^{n-1}_\dR(U/S))\lra \Sol^T(M_A(\bc))
\]
for a regular triangulation $T$ of $A$, where
$\Sol^T(-)=\Hom_D(-,\O^{\text{an}}_z)$ denotes the space of local solutions (see \S \ref{sect:GKZ system sol}).
In particular, the period integrals of $U/S$ are solutions of the GKZ system $(A,\bc)$.
By Theorem \ref{thm:SST limits span the solution space} together with the fact that
$\bc$ is in the interior of ${\rm pos}(A)$,
we have the following theorem.
For a fixed $\bz\in S$, we write $U_{\bz}$ for the fiber at $\bz$.
Given a cycle $\delta\in H_{n-1}^B(U_{\bz};K)$, we all $\int_{\delta}\omega_{\bc}$.
By Ehresmann's fibration theorem, one can continuously deform the cycle $\delta$ as $\bz$ varies to define a locally defined holomorphic function.
We denote this function by $\int_{\delta}\omega_{\bc}$ by abuse of notation.
\begin{theorem}
The following assertions hold true:
\begin{enumerate}
    \item The dimension of $\Sol^T(M_A(\bc))$ equals $\vol(A)$;
    \item A period integral $\int_\delta\omega_{\bc}$
belongs to $\Sol^T(M_A(\bc))$.
\end{enumerate}
\end{theorem}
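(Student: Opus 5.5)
The theorem is essentially a juxtaposition of Theorem \ref{thm:SST limits span the solution space} with the GKZ annihilation $H_A(\bc)\bullet\omega_{\bc}=0$. I will prove the two assertions separately.

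For assertion (1), I first check that $\bc$ lies in the interior of ${\rm pos}(A)$. Since $\bc\in\Z_{>0}^n$, it is enough to show that $\R_{>0}^n$ is contained in the interior of ${\rm pos}(A)$. Generic smoothness of $F$ implies that for each $j$ the set $\{\ba_i\}$ contains a vector of the form $(d-1)\be'_j+\be'_k$, or else $d\be'_j$; otherwise $X$ would be singular at the coordinate point $[\be'_j]$. From this:
\begin{itemize}
\item the cone ${\rm pos}(A)$ contains vectors arbitrarily close to each ray $\R_{\geq0}\be'_j$;
\item together with the homogeneity $|\ba_i|=d$ and ${\rm rank}A=n$, this places every strictly positive vector in the interior.
\end{itemize}
Once this holds, Theorem \ref{thm:SST limits span the solution space} gives that the holonomic rank of $M_A(\bc)$ is $\vol(A)$. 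Since $M_A(\bc)$ is regular holonomic and its restriction to the open set $U_T$ (away from its singular locus) is an integrable connection, $\dim\Sol^T(M_A(\bc))=\vol(A)$ at a generic point of $U_T$.

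For assertion (2), the map $M_A(\bc)\to H^{n-1}_\dR(U/S)$, $P\mapsto P\bullet\omega_{\bc}$, is well defined by the theorem of \cite{GKZ}. A locally constant family of cycles $\delta$ defines a horizontal functional $H^{n-1}_\dR(U/S)\to\O^{\rm an}$, $\eta\mapsto\int_\delta\eta$, because integration over a flat family of cycles commutes with the Gauss--Manin derivative. Composing the two maps gives an element of $\Hom_D(M_A(\bc),\O^{\rm an}_{\bz})$ that sends $1$ to $\int_\delta\omega_{\bc}$. Hence this function is annihilated by $H_A(\bc)$, so it lies in $\Sol^T(M_A(\bc))$ for $\bz\in U_T\cap S$.

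The main obstacle is the interior condition in assertion (1), since the smoothness-to-Newton-polytope argument needs care when $d\le2$. In that case I would instead invoke the rank assumption directly. Failing that, I would note that the paper only needs the statement for the Fermat deformation, where $d\be'_1,\dots,d\be'_n$ are columns of $A$, so that ${\rm pos}(A)=\R^n_{\ge0}$ and the condition is immediate.
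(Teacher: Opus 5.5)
Your proof of (2) is the paper's argument and is fine. The GKZ annihilation $H_A(\bc)\bullet\omega_{\bc}=0$ makes $P\mapsto P\bullet\omega_{\bc}$ a $D$-linear map into $H^{n-1}_\dR(U/S)$. Integration over a horizontal family of cycles is $D$-linear. The composite sends $1$ to $\int_\delta\omega_{\bc}$.

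For (1) you also follow the paper: apply Theorem~\ref{thm:SST limits span the solution space} once $\bc$ is in the interior of ${\rm pos}(A)$. However, your justification of that interior condition is wrong. The monomial criterion from smoothness at $[\be'_j]$ is correct. But a fixed column $(d-1)\be'_j+\be'_k$ does not bring ${\rm pos}(A)$ ``arbitrarily close'' to the ray $\R_{\geq0}\be'_j$, and the inclusion $\R^n_{>0}\subset{\rm int}\,{\rm pos}(A)$ is false.

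Here is a counterexample. Take $n=d=3$ and $F=z_1x_1^2x_2+z_2x_2^2x_3+z_3x_3^2x_1+z_4x_1x_2x_3$.
\begin{enumerate}
\item[(a)] It is generically smooth, since at $\bz=(1,1,1,0)$ it is the smooth Klein cubic, and ${\rm rank}\,A=3$.
\item[(b)] ${\rm pos}(A)$ is the simplicial cone on $(2,1,0),(0,2,1),(1,0,2)$, because $(1,1,1)$ is their average.
\item[(c)] $\bc=(1,1,4)$, with $|\bc|=2d$, equals $-\tfrac13(2,1,0)+\tfrac23(0,2,1)+\tfrac53(1,0,2)$, so it is not in ${\rm pos}(A)$ at all.
\item[(d)] $\bc=(1,2,3)=(0,2,1)+(1,0,2)$ lies on the boundary of ${\rm pos}(A)$.
\end{enumerate}
Since $d=3$ here, this is not a small-$d$ phenomenon, and the rank assumption does not help.

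So in the generality of \S\ref{sect:GKZ-Hypersurface}, the hypothesis of Theorem~\ref{thm:SST limits span the solution space} is not available. The paper's one-line proof likewise takes the interior condition for granted.

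In the example, (1) is still true, but for a different reason. Here $L_A=\Z(1,1,1,-3)$, so the toric ideal is the principal ideal $(\partial_1\partial_2\partial_3-\partial_4^3)$. The toric ring is therefore Cohen--Macaulay, and there are no rank-jumping parameters. In general one needs an input of this kind (that $\bc$ is not rank-jumping), not the interior condition.

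Your fallback is the correct repair for what the paper actually uses. If $d\be'_1,\dots,d\be'_n$ are columns of $A$, then ${\rm pos}(A)=\R^n_{\geq0}$ and every $\bc\in\Z^n_{>0}$ is interior (Lemma~\ref{lem:FermatDeform.lemma}(1)). Step~1 of the proof of Theorem~\ref{thm:main} reduces everything to this case. State that restricted version (or add the interior hypothesis) as what you prove, rather than presenting it as a fallback to a false general claim.
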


The residue map induces an isomorphism
\begin{equation}\label{eq:residule map}
    \Res: H_\dR^{n-1}(U/S)\overset{\cong}{\lra} 
    H_\dR^{n-2}(X/S)_{\mathrm{prim}}:=\mathrm{Ker}
    [H_\dR^{n-2}(X/S)\to H^n_\dR(\P^{n-1}/S)]
\end{equation}
onto the primitive part, and this is compatible with the homomorphism 
$H_B^{n-1}(U_{\bz},\Q)\to H^{n-2}_B(X_{\bz},\Q(-1))_{\mathrm{prim}}$ 
between Betti cohomology.
In what follows, we often identify $H_\dR^{n-1}(U/S)$
with $H_\dR^{n-2}(X/S)_{\mathrm{prim}}$, and also
the Betti homology
$H^B_{n-1}(U_{\bz},\Q)$ with
$H_{n-2}^B(X_{\bz},\Q(1))_{\mathrm{prim}}$, the dual space of 
$H^{n-2}_B(X_{\bz},\Q(-1))_{\mathrm{prim}}$.

\section{Periods of Fermat deformations}
\label{sect:Fermat}
\subsection{Periods of Fermat varieties}\label{sect:periods of Fermat varieties}
Let $X$ be a smooth projective hypersurface $X\subset \P^{n-1}$
of degree $d$
defined over $\C$.
Let $F\in \C[x_1,\dots,x_n]$ be the defining polynomial of $X$.
Let $J_F\subset \C[x_1,\dots,x_n]$ be the ideal generated by $\frac{\partial F}{\partial x_1},\dots,\frac{\partial F}{\partial x_n}$ and let 
$R_F=K[x_1,\dots,x_n]/J_F$ be the quotient ring equipped with the 
natural grading $\{ R^p_F\}_{p\in\Z}$.
Let $F^\bullet H_\dR^{n-1}(\P^{n-1}\setminus X)$ denote the Hodge filtration on $H_\dR^{n-1}(\P^{n-1}\setminus X)$.
By \cite[\S 4]{Griffiths}, it follows that the arrow
\begin{equation}\label{eq:Griffiths reduction}
    R^{d(p-n+2)-n}_F\lra {\rm Gr}^p_FH_\dR^{n-1}(\P^{n-1}\setminus X)
    \cong {\rm Gr}^{p-1}_FH_\dR^{n-2}(X)_{\mathrm{prim}},
    \quad P\longmapsto \frac{P}{F^{p-n+2}}\Omega
\end{equation}
is bijective for $1\leq p\leq n-1$.
The Fermat variety $X_\Fer$ is the hypersurface defined by
the equation  
$$F_d:=x_1^d+\cdots+x_n^d.$$
Put $U_{\rm Fer}:=\mathbb{P}^{n-1}\setminus X_{\rm Fer}$.
Let $\mu_d:=\{ e^{2\pi\ii \frac{i}{d}}\mid i=0,1,\dots,d-1\}$ denote the cyclic group of order $d$ and let $D_d:=\{ (\zeta,\dots,\zeta)\in\mu_d^n\mid \zeta\in\mu_d\}$ denote the diagonal subgroup of $\mu_d^n$.
The quotient group $G_{\Fer}=\mu_d^n/D_d$ acts on $X_{\Fer}$ in a natural way as follows: for any $(\zeta_1,\dots,\zeta_n)\in G_{\Fer}$ and $[x_1:\cdots:x_n]\in X_{\Fer}$, the action is given by
\begin{equation}\label{eq:action}
    (\zeta_1,\ldots,\zeta_n)\bullet[x_1:\dots:x_n]=[\zeta_1x_1:\dots:\zeta_nx_n].
\end{equation}

For $\bc\in \Z^n$, let $H_\dR(\bc)$ denote the eigenspace 
of 
$H^{n-2}_\dR(X_{\Fer})_{\mathrm{prim}}$ on which $(\zeta_1,\ldots,\zeta_n)\in G_{\Fer}$ acts by multiplication by $\zeta_1^{c_1}\cdots\zeta_n^{c_n}$.
We set
\begin{equation}\label{eq:I}
    I:=\{\bc=(c_1,\dots,c_n)\in \Z^n\mid 0<\forall\,c_i<d,\,|\bc|\equiv 0\bmod d\}.
\end{equation}
Note that the cardinality of $I$ is given by $\frac{(d-1)((d-1)^{n-1}-(-1)^{n-1})}{d}$.
In view of the isomorphism \eqref{eq:Griffiths reduction}, we obtain the following simultaneous eigenspace decompositions:
\begin{equation}\label{Fermat-eq1}
H^{n-2}_\dR(X_{\Fer})_{\mathrm{prim}}=
\bigoplus_{\bc\in I}H_\dR(\bc),\quad \dim_{\ol\Q} H_\dR(\bc)=1
\end{equation}
and
\begin{equation}\label{Fermat-eq2}
H^{p,n-2-p}_\dR(X_\Fer)_{\mathrm{prim}}=
\bigoplus_{\bc\in I,\,|\bc|=(p+1)d}H_\dR(\bc).
\end{equation}
For $\bc\in \Z_{\geq 1}^n$ that satisfies $|\bc|=rd$, let
\begin{equation}
\omega_{\bc,\Fer}:=\frac{\bx^{\bc}}{\Fereq^r}\frac{d\bx}{\bx}=\frac{x_1^{c_1-1}\cdots x_n^{c_n-1}}{\Fereq^r}\Omega\in H^{n-1}_\dR(U_{\Fer})
\end{equation}
be the element as in \eqref{eq:omega.bc}.
The following lemma is used in the next section.

\begin{lemma}\label{lem:Fermat-cycle}
    There exists a cycle $[\delta_0]\in H^B_{n-1}(U_\Fer,\Q)$ such that for any positive vector $\bc\in\mathbb{Z}^n_{>0}$ with $|\bc|=dr$, the identity
    \begin{equation}\label{eq:3.7}
        \int_{\delta_0}\omega_{\bc,\Fer}=(2\pi\ii)^n\frac{(-1)^r}{\Gamma(r)\prod_{i=1}^n\Gamma(1-\frac{c_i}{d})}
    \end{equation}
    holds true.
\end{lemma}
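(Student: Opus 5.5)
We construct $\delta_0$ explicitly as a real torus in the affine chart $x_n=1$, and compute the integral by reducing it to a product of one-dimensional Gamma integrals. Work on $U_\Fer$ with affine coordinates $y_i=x_i/x_n$ ($1\le i\le n-1$). There the form becomes, up to a sign fixed by the convention \eqref{eq:Omega},
\[
\omega_{\bc,\Fer}=\pm\frac{y_1^{c_1}\cdots y_{n-1}^{c_{n-1}}}{(1+y_1^d+\cdots+y_{n-1}^d)^r}\frac{dy_1}{y_1}\wedge\cdots\wedge\frac{dy_{n-1}}{y_{n-1}}.
\]
Rather than integrate this directly, we lift to $\C^n\setminus\{F_d=0\}$ and use the Laplace/Gamma representation
\[
\frac{1}{F_d^r}=\frac{1}{\Gamma(r)}\int_0^\infty e^{-sF_d}s^{r-1}ds,
\]
which gives a factorized expression once the homogeneity $|\bc|=dr$ is taken into account.

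Concretely, we consider the $n$-form $\bx^{\bc}e^{-F_d}\,\frac{dx_1}{x_1}\wedge\cdots\wedge\frac{dx_n}{x_n}$ on $\C^n$ and integrate it over a product of Pochhammer-type (or Hankel) cycles $\delta^{(1)}\times\cdots\times\delta^{(n)}$, one in each $x_i$-plane. In the variable $t=x_i^d$, the one-dimensional factor becomes
\[
\frac1d\int t^{c_i/d-1}e^{-t}\,dt,
\]
and over a Hankel loop around the positive real axis this equals $\frac{1}{d}(e^{2\pi\ii c_i/d}-1)\Gamma(c_i/d)$. Applying the reflection formula $\Gamma(c_i/d)\Gamma(1-c_i/d)=\pi/\sin(\pi c_i/d)$ turns this into $\frac{2\pi\ii\, e^{\pi\ii c_i/d}}{d\,\Gamma(1-c_i/d)}$. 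The character factor $e^{\pi\ii c_i/d}$ is undesirable, so we replace the Hankel loop by a suitable $\Z[\mu_d]$-combination of its $\mu_d$-translates, or equivalently by a cycle whose $t$-image is a loop around $0$ traversed appropriately. Because the $G_\Fer$-eigencharacter of $\omega_{\bc,\Fer}$ is $\zeta^{\bc}$, such a combination cancels the root-of-unity factor for all $\bc$ simultaneously, provided we choose the combination uniformly in $\bc$. The cleanest way to achieve this is to use the cycle $\{|x_i|=1\}$ in homogeneous coordinates. For this torus cycle, however, the residue computation needs $F$ to be dominated by a single monomial, so some care is needed here.

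Given this difficulty, a more robust route is to pick a cycle that is rational and independent of $\bc$ and to evaluate it via the multinomial expansion. Take $\delta_0$ to be a torus $\{|y_i|=\epsilon_i\}$ on which $|1|>\sum|y_i|^d$, so that $(1+\sum y_i^d)^{-r}$ expands as a convergent power series. The integral over this torus extracts the coefficient of $\prod y_i^{-c_i}$, which vanishes unless the $c_i$ are $\le 0$; so this cycle is too trivial. To get a nontrivial integral we instead take the torus in the region where $y_1^d$ dominates, treating the monomial $x_1^d$ as the dominant one. Expanding $(y_1^d(1+\cdots))^{-r}$ in that region, the coefficient is a multinomial coefficient with negative exponents, namely $\binom{-r}{k_2,\dots,k_n}$ with $dk_i=-c_i$ for $i\ge2$. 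This again vanishes for positive $c_i$.

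So torus cycles do not work, and one needs genuine twisted (Pochhammer) cycles. The plan is therefore the following. First, take $\delta_0$ to be the image in $U_\Fer$, under the map $\C^n\setminus\{F_d=0\}\to U_\Fer$, of the product of Pochhammer loops $P_i$ in the $t_i=x_i^d$ planes. Second, compute the integral via the Gamma representation, the fibration by $s$, and homogeneity; this produces the factor $(2\pi\ii)\cdot(-1)^r/\Gamma(r)$ together with the product of one-dimensional integrals. Third, use the Pochhammer evaluation
\[
\int_{P}t^{a-1}e^{-t}\,dt=(1-e^{2\pi\ii a})\cdots\Gamma(a),
\]
with the loops chosen in $\bc$-independent combinations, so that the product becomes $(2\pi\ii)^{n-1}/\prod\Gamma(1-c_i/d)$.

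The main obstacle is arranging that the phase factors $e^{\pi\ii c_i/d}$ cancel uniformly in $\bc$ while keeping the cycle rational. I would try to achieve this using the constraint $\sum c_i/d=r\in\Z$, which makes the product $\prod e^{\pi\ii c_i/d}=(-1)^r$. This is exactly the source of the $(-1)^r$ in \eqref{eq:3.7}, and it suggests using the same Hankel loop in every coordinate.
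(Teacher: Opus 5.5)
Your final plan can be made to work, but as written it has two concrete gaps.

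\emph{First, you never actually produce a class in $H^B_{n-1}(U_\Fer,\Q)$.} The product of $n$ Hankel loops is a non-compact $n$-chain in $\C^n$. Pushing it forward to $\P^{n-1}$ gives a chain of the wrong dimension, and the small circles of the loops even meet $\{F_d=0\}$.

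What your Laplace-transform step needs is an $(n-1)$-cycle $\gamma\subset\C^n$ with $\mathrm{Re}\,F_d>0$ on it, whose real cone $\R_{>0}\cdot\gamma$ is the rapid-decay cycle. Only then does Fubini in the radial variable give $\int_{\R_{>0}\gamma}\bx^{\bc-\boldsymbol{1}}e^{-F_d}\,dx_1\wedge\cdots\wedge dx_n=\pm\frac{\Gamma(r)}{d}\int_{\pi(\gamma)}\omega_{\bc,\Fer}$, with $\pi:\C^n\setminus\{0\}\to\P^{n-1}$.

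You can get there by shrinking each loop. Lifted to the $x_i$-plane, the loop degenerates to $\zeta_d\R_{\geq0}-\R_{\geq0}=(\sigma_i-1)\R_{\geq0}$, which is legitimate since $c_i\geq1$. The product then becomes $\Theta\cdot\R^n_{\geq0}$ with $\Theta=\prod_i(\sigma_i-1)$. This is the real cone over the lift to $\{F_d=1\}$ of $\Theta[\Delta_\bt]$, where $[\Delta_\bt]$ is the real simplex $x_i=t_i^{1/d}$, $t_i\geq0$, $\sum t_i=1$.

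You still have to show that $\Theta[\Delta_\bt]$ is closed in $U_\Fer$. Its boundary faces lie in $\{x_i=0\}$, which $\sigma_i$ fixes pointwise, so the factor $\sigma_i-1$ kills them. The paper phrases this as $\Theta$ acting by zero on $H_*(D,\Q)$, $D=\{x_1\cdots x_n=0\}\cap U_\Fer$. This closedness is the real content of the lemma, and the resulting cycle is exactly the paper's $\delta_0=d^{n-1}\Theta[\Delta_\bt]$.

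\emph{Second, your final bookkeeping is wrong, and the ``phase obstacle'' does not exist.} Your own one-variable evaluation, taken over all $n$ coordinates, gives $\prod_{i=1}^n\frac{2\pi\ii\,e^{\pi\ii c_i/d}}{d\,\Gamma(1-c_i/d)}=\frac{(2\pi\ii)^n(-1)^r}{d^n\prod_i\Gamma(1-c_i/d)}$, since $\sum_i c_i/d=r$. The radial integral contributes only $\Gamma(r)/d$, with no $2\pi\ii$ and no sign.

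So your split into $(2\pi\ii)(-1)^r/\Gamma(r)$ from the Gamma representation and $(2\pi\ii)^{n-1}$ from the loops is incorrect: there are $n$ loops, not $n-1$. No $\bc$-independent recombination is needed, because the phases are precisely the $(-1)^r$. The net value is $\frac{(2\pi\ii)^n(-1)^r}{d^{n-1}\Gamma(r)\prod_i\Gamma(1-c_i/d)}$. This is why the cycle must be rescaled by $d^{n-1}$, and possibly by a $\bc$-independent orientation sign.

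Once repaired, your argument differs from the paper's only in how the integral is evaluated. The paper pulls $\omega_{\bc,\Fer}$ back to the simplex directly and gets $\prod_i(\zeta_d^{c_i}-1)\prod_i\Gamma(c_i/d)/\Gamma(r)$ from the Dirichlet integral, then applies the reflection formula. Your Laplace factorization is just the classical proof of that Dirichlet integral, at the cost of a detour through rapid-decay chains on $\C^n$.
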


\begin{proof}
Let $\Delta_{\bt}=\{(t_1,\cdots,t_n)\in\R^n\mid
t_i\geq0, \,t_1+\cdots+t_n=1\}$ be the standard $(n-1)$-simplex.
We consider an embedding $\Delta_{\bt}\ni
(t_1,\ldots,t_n)\mapsto
[t^{1/d}_1:\cdots:t^{1/d}_n]\in U_{\Fer}$, which defines a homology cycle
$[\Delta_{\bt}]\in H_{n-1}^B(U_{\Fer},D,\Z)$ where $D:=\{x_1\cdots x_n=0\}\cap U_{\Fer}$.
Recall that the group $G_{\Fer}=\mu_d^n/D_d$ acts on $U_{\Fer}$ and $D$.
Let $\sigma_i:=(1,\ldots,\zeta_d,\ldots,1)\in G_{\Fer}$ where $\zeta_d=e^{\frac{2\pi\ii}d}$ is
placed in the $i$-th component.
We put $\Theta:=\prod_{i=1}^n(\sigma_i-1)\in \Z[G_{\Fer}]$.
For each component $D_i=\{x_i=0\}\cap U_{\Fer}$, the action of $\sigma_i$ on $H^B_*(D_i,\Z)$ is the identity, and hence $\sigma_i-1$ acts on it by zero.
Thanks to the spectral sequence $E_1^{pq}=\bigoplus H^q_B(D_{i_0}\cap\cdots\cap D_{i_p},\Q)\Rightarrow H^{p+q}_B(D,\Q)$, it follows that $\Theta$ is nilpotent on $H^B_*(D,\Q)$, and hence is zero
since $\Q[G_{\Fer}]$ is a semisimple algebra.
Thus, the operator
$\Theta$ defines a homology class 
$\delta_0:=d^{n-1}\Theta[\Delta_{\bt}]\in H_{n-1}^B(U_{\Fer},\Q).
$
We have
\begin{align*}
\int_{\delta_0}\omega_{\bc,\Fer}
=&
\prod_{i=1}^n(\zeta_d^{c_i}-1)\cdot\int_{\Delta_{\bt}}t_1^{\frac{c_1}d}\cdots t_{n-1}^{\frac{c_{n-1}}d-1}
t_n^{\frac{c_n}d-1}\frac{dt_1}{t_1}\cdots \frac{dt_{n-1}}{t_{n-1}}\\
=&
\prod_{i=1}^n(\zeta_d^{c_i}-1)\cdot\int_{t_i\geq0,\sum_{i=1}^{n-1}t_i\leq 1}t_1^{\frac{c_1}d-1}\cdots t_{n-1}^{\frac{c_{n-1}}d-1}
(1-t_1-\cdots-t_{n-1})^{\frac{c_n}d-1}dt_1\cdots dt_{n-1}\\
=&
\prod_{i=1}^n(\zeta_d^{c_i}-1)
\frac{\Gamma({c_1}/d)\cdots \Gamma({c_n}/d)}{\Gamma(|\bc|/d)}.
\end{align*}
The last equality is the Dirichlet integral
(e.g. \cite[5.14.2]{olver2010nist}).
One can now derive \eqref{eq:3.7} from the formula $\Gamma(s)\Gamma(1-s)=\pi/\sin(\pi s)$.
\end{proof}

\subsection{Preliminaries on Fermat deformations}\label{sect:Fermat deformation}
We consider a homogeneous polynomial 
\begin{equation}\label{eq:Fermat deformation}
    F=F(\bx;\bz)
    :=\sum_{i=1}^mz_i\bx^{\ba_i}+z_{m+1}x_1^d+\cdots+z_{m+n}x_n^d,
\end{equation}
of degree $d$ 
where $z_1,\dots,z_{m+n}$ are deformation parameters.
Let $S\subset\Spec\C[z_1,\ldots,z_{m+n}]$ be an affine open set such that
the hypersurface $X\subset \P^{n-1}_S$ 
defined by $F(\bz;\bx)$ is smooth over $S$.
We call the family $X/S$ a {\it Fermat deformation}.
Put $U=\P^{n-1}_S\setminus X$.
As in \S \ref{sect:GKZ-Hypersurface}, 
let $A=\begin{bmatrix}
    \ba_1&\cdots&\ba_m&d\be_1&\cdots&d \be_n
\end{bmatrix}$ be the corresponding matrix to $F$ where $\be_1,\ldots,\be_n$ are the standard vectors.
We assume that $S$ contains the locus 
\begin{equation}\label{eq:the torus}
    \Sigma=\{z_1=\cdots =z_m=0
,\,z_{m+1}\cdots z_{m+n}\ne0\}.
\end{equation}
Let $G_{\Fer}=\mu^n_d/D_d$ be the group in \S \ref{sect:periods of Fermat varieties}.
Let $G\subset G_{\Fer}$ be the subgroup such that 
$\boldsymbol{\zeta}=(\zeta_1,\ldots,\zeta_n)$ belongs to $G$
if and only if $\boldsymbol{\zeta}^{\ba_i}=1$ for all $i$.
Then $G$ acts on $X$ in the way of \eqref{eq:action}.

\begin{lemma}\label{lem:FermatDeform.lemma}
Let $\ol\ba\in(\Z/d\Z)^n$ denote the image of $\ba\in\Z^n$. 
\begin{itemize}
\item[\rm(1)]
For any $\bc\in (\R_{>0})^n$, the holonomic rank of
$M_A(\bc)$ is the normalized volume $\vol(A)$.
\item[\rm(2)]
Let $\langle\ol\ba_1,\ldots,\bar\ba_m\rangle\subset(\Z/d\Z)^n$ denote the subgroup generated by
$\ol\ba_1,\ldots,\ol\ba_m$. 
Then one has
$
\vol(A)=\sharp\langle\ol\ba_1,\ldots,\bar\ba_m\rangle.
$
\end{itemize}
\end{lemma}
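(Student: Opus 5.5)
For (1) I would invoke Theorem \ref{thm:SST limits span the solution space}. It says that if $\bc$ lies in the interior of ${\rm pos}(A)$, then the holonomic rank of $M_A(\bc)$ equals $\vol(A)$. The columns of $A$ include $d\be_1,\dots,d\be_n$, so ${\rm pos}(A)\supseteq\R_{\geq0}^n$. Every $\bc\in\R_{>0}^n$ is therefore an interior point of ${\rm pos}(A)$, and (1) follows at once. Here $A$ is homogeneous, with $\bv=(1/d,\dots,1/d)$, because every column has coordinate sum $d$; so the standing assumption of \S\ref{sect:GKZ system} is satisfied.

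For (2) I would compute $\vol(A)$ through one convenient triangulation, namely the Fermat-type triangulation. The polytope $\Delta_A$ is the convex hull of $\boldsymbol 0$ and the $\ba_i$, $d\be_j$. Since every column lies on the hyperplane $\{|\bx|=d\}$ and $d\be_1,\dots,d\be_n$ span the whole simplex $\{\bx\geq0,\ |\bx|=d\}$ containing all $\ba_i$, we get $\Delta_A={\rm conv}(\boldsymbol 0,d\be_1,\dots,d\be_n)$. This set is a single simplex. Its Lebesgue volume in the normalization of \eqref{eq:normalized volume} is $d^n$, since it is $d$ times the standard simplex.

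By \eqref{eq:normalized volume}, $\vol(A)=d^n/[\Z^n:\Z A]$. It remains to identify the index. The lattice $\Z A$ contains $d\Z^n$, so
\[
[\Z^n:\Z A]=\frac{[\Z^n:d\Z^n]}{[\Z A:d\Z^n]}=\frac{d^n}{\sharp(\Z A/d\Z^n)}.
\]
Moreover $\Z A/d\Z^n$ is exactly the subgroup $\langle\ol\ba_1,\ldots,\ol\ba_m\rangle$ of $(\Z/d\Z)^n$, because the columns $d\be_j$ map to zero. Combining the two displays gives $\vol(A)=\sharp\langle\ol\ba_1,\ldots,\ol\ba_m\rangle$.

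The only delicate point is normalization. One must check that the paper's definition of $\vol(A)$, Lebesgue volume of $\Delta_A$ divided by the lattice index, is the one that matches the degree of $I_A$ and the holonomic rank, and is not measured relative to $\Z A$ in some other way. I would confirm this on the pure Fermat case $m=0$, or with $\ba_i$ multiples of $d\be_j$. There $\Z A=d\Z^n$ and the formula gives $\vol(A)=1$, which agrees with the one-dimensional solution space spanned by the single monomial solution $\prod_j z_{m+j}^{-c_j/d}$.
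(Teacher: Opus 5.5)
Your proof is correct and is essentially the paper's argument: (1) follows from Theorem~\ref{thm:SST limits span the solution space} because ${\rm pos}(A)=\R_{\geq0}^n$, and (2) follows from $[\Z^n:\Z A]=[(\Z/d\Z)^n:\langle\ol\ba_1,\ldots,\ol\ba_m\rangle]$ together with the definition \eqref{eq:normalized volume}. The only difference is that you spell out the step the paper leaves implicit, namely that $\Delta_A=\mathrm{conv}(\boldsymbol{0},d\be_1,\ldots,d\be_n)$ has normalized volume $d^n$ (taking the simplex $\mathrm{conv}(\boldsymbol{0},\be_1,\ldots,\be_n)$, not the unit cube, to have volume one).
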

\begin{proof}
Notice that ${\rm pos}(A)=(\R_{\geq0})^n$.
Then, (1) follows from Theorem \ref{thm:SST limits span the solution space}.
The second assertion (2) follows from
the fact that
$[\Z^n:\Z A]=[(\Z/d\Z)^n:\langle\ol\ba_1,\ldots,\bar\ba_m\rangle]$
and the definition of $\vol(A)$ in
\eqref{eq:normalized volume}.
\end{proof}

Let $I$ be the set of
tuples in \eqref{eq:I}.
For any $\bc,\bc'\in I$, we define an equivalence relation $\bc\sim\bc'$ by the condition $\bar\bc'-\bar\bc\in\Z\bar\ba_1+\cdots+\Z\bar\ba_m$, where $\bar\ba\in(\Z/d\Z)^n$ denotes the reduction modulo $d$.
We set 
\begin{equation}\label{eq:definition:Ic}
I_{\bc}:=\{\bc'\in I\mid \bc\sim\bc'\}.
\end{equation}
Recall from \eqref{eq:omega.bc}
the element
\[
\omega_{\bc}:=
\frac{\bx^{\bc}}{F^r}\frac{d\bx}{\bx}\in H^{n-1}_\dR(U/S)
\]
of the de Rham cohomology for
$\bc\in \Z^n_{\geq1}$ that satisfies 
$|\bc|=rd$ with $r\in\Z_{\geq1}$.
We put $R:=\O(S)$ and $R_\Sigma:=\varinjlim\O_S(V)$ where 
$V$ runs over Zariski open sets which contain $\Sigma$ (see \eqref{eq:the torus} for the definition of $\Sigma$).

\begin{lemma}\label{lem:Fermat-dR-lemma}
The following assertions hold true:
\begin{itemize}
\item[\rm(1)]
$H^{n-1}_\dR(U/S)\otimes_RR_\Sigma$ has an $R_\Sigma$-basis $\{\omega_{\bc}\}_{\bc\in I}$.
In particular, 
$\operatorname{rank}H^{n-1}_\dR(U/S)\ot_RR_\Sigma=\sharp I$.
\item[\rm(2)]
For $\bc\in I$, let $H_\dR(\bc)$ denote the submodule of $H^k_{\dR}(U/S)$ on which 
$\boldsymbol{\zeta}\in G$ acts by
multiplication by $\zeta^{\bc}$.
Then $H_\dR(\bc)\ot_RR_\Sigma$ has an $R_\Sigma$-basis $\{\omega_{\bc'}\}_{\bc'\in I_{\bc}}$. 
In particular, 
$\operatorname{rank}
H_\dR(\bc)\ot_RR_\Sigma=\sharp I_{\bc}$.
\end{itemize}
\end{lemma}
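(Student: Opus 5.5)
The plan is to reduce everything to the Fermat fiber over $\Sigma$ and then spread out by Nakayama's lemma over the local ring $R_\Sigma$. Since $H^{n-1}_\dR(U/S)$ is a locally free $\O_S$-module (the family is smooth over $S$) and is compatible with base change, it suffices to show two things. First, the classes $\{\omega_{\bc}\}_{\bc\in I}$ restrict to a basis of the fiber $H^{n-1}_\dR(U_{\bz})$ at every point $\bz\in\Sigma$. Second, the rank equals $\sharp I$.

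\emph{The fibers over $\Sigma$.} At a point $\bz\in\Sigma$ the polynomial is $F=z_{m+1}x_1^d+\cdots+z_{m+n}x_n^d$ with all $z_{m+i}\neq0$. Rescale $x_i\mapsto \lambda_i x_i$ with $\lambda_i^d=z_{m+i}^{-1}$; this identifies $U_{\bz}$ with $U_{\Fer}$. Under this identification $\omega_{\bc}$ goes to a nonzero constant multiple of $\omega_{\bc,\Fer}$. By Griffiths' isomorphism \eqref{eq:Griffiths reduction}, the Jacobian ring $R_{\Fereq}=\C[\bx]/(x_1^{d-1},\dots,x_n^{d-1})$ has monomial basis $\bx^{\bc-\boldsymbol 1}$ with $0\le c_i-1\le d-2$. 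Restricting to the degrees occurring in \eqref{eq:Griffiths reduction} singles out exactly the condition $|\bc|\equiv0\bmod d$, i.e.\ $\bc\in I$. Hence $\{\omega_{\bc,\Fer}\}_{\bc\in I}$ is a basis, compatible with the decomposition \eqref{Fermat-eq1}.

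One point needs care. Griffiths' theorem gives a basis of the graded pieces, whereas the forms $P\Omega/F^{p-n+2}$ are lifts to $F^p$. A basis of $\bigoplus_p\mathrm{Gr}^p_F$ lifted to $F^p$ is still a basis of the whole space, because filtered lifts of a graded basis always span. This gives the fiberwise statement; in particular the rank is $\sharp I$. The rank could also be read off from the Fermat computation together with the constancy of rank in a smooth family.

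\emph{Spreading out.} Consider the $R_\Sigma$-linear map $R_\Sigma^{\sharp I}\to H^{n-1}_\dR(U/S)\ot_RR_\Sigma$ sending the basis vectors to the $\omega_{\bc}$. This is a map between free modules of the same rank. Its determinant, computed in a local frame, is a regular function whose value is nonzero at every point of $\Sigma$, by the fiberwise step. So the determinant is nonzero on a Zariski open set $V\supset\Sigma$ and becomes a unit in $R_\Sigma$, which proves (1).

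\emph{The equivariant refinement (2).} The group $G$ fixes each $\bx^{\ba_i}$, so it acts on $F$ trivially. It acts on $\omega_{\bc}$ by $\boldsymbol\zeta^{\bc}$, since $d\bx/\bx$ is invariant. Two elements $\bc,\bc'\in I$ give the same character of $G$ exactly when $\bar\bc-\bar\bc'$ is annihilated by all $\boldsymbol\zeta\in G$. By duality of finite abelian groups, $G$ is the annihilator of $\langle\bar\ba_1,\dots,\bar\ba_m\rangle$ inside the annihilator of the diagonal; the diagonal annihilator is automatic for $\bc,\bc'\in I$, since $|\bc|,|\bc'|\equiv 0\bmod d$. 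Hence equal characters is equivalent to $\bar\bc-\bar\bc'\in\langle\bar\ba_i\rangle$, i.e.\ $\bc\sim\bc'$. Because $G$ is finite, the $G$-action decomposes $H^{n-1}_\dR(U/S)\ot R_\Sigma$ into isotypic summands. The basis from (1) consists of eigenvectors, so it splits accordingly, and $H_\dR(\bc)\ot R_\Sigma$ has basis $\{\omega_{\bc'}\}_{\bc'\in I_{\bc}}$.

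The main obstacle I expect is this duality identification of the character classes with $I_{\bc}$. The delicate part is the role of the quotient by $D_d$ and the precise definition of $G$ inside $\mu_d^n/D_d$. I would verify it by writing $\mathrm{Hom}((\Z/d)^n,\mu_d)=\mu_d^n$ and computing annihilators directly.
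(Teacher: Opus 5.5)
Your proposal is correct and follows essentially the same route as the paper. Both arguments use local freeness to reduce to the fibers over $\Sigma$, which are isomorphic to the Fermat hypersurface, compute the basis there, spread it out, and obtain (2) from the $G$-isotypic decomposition. The paper simply cites Katz for the Fermat computation, whereas you carry it out via the Jacobian ring $\C[\bx]/(x_1^{d-1},\dots,x_n^{d-1})$ and Griffiths' reduction; you also spell out the open-neighbourhood (determinant) argument and the duality identifying characters of $G$ with $I/\sim$, and both are fine.
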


\begin{proof}
We know that all $H_\dR(\bc)$ are locally free $R$-modules, so that
it suffices to show the lemma at each geometric points
in $\Sigma$.
However, at any point of $\Sigma$, 
the hypersurface $X$ is isomorphic to the Fermat variety.
It is simple to compute the basis of
the de Rham cohomology of Fermat varieties, and then one easily obtains the lemma 
(cf. the proof of \cite[Lemma 3.1]{katz2010another}).
\end{proof}

To an arbitrary $\bc\in I$, we associate a character of
$G$ given by $\boldsymbol{\zeta}\mapsto\boldsymbol{\zeta^{\bc}}$.
Then the set of characters of $G$ 
has one-to-one correspondence to the set $I/\sim$.
Let $\{\bc_i\}_i$ be a complete set of representatives of
$I/\sim$. 
We have a decomposition $I=\coprod_{i} I_{\bc_i}$, 
and a direct sum decomposition 
\begin{equation}\label{eq:decomposition of H_dR}
H^{n-1}_\dR(U/S)=\bigoplus_{i}H_\dR(\bc_i)
\end{equation}
as $R$-modules.
The decomposition \eqref{eq:decomposition of H_dR} is compatible with the Gauss-Manin derivative, hence, is a direct sum as $\mathcal{D}_S$-modules.

\medskip

The following lemma is due to Ryo Negishi.

\begin{lemma}[Negishi]\label{lem:Negishi-thm}
Let $\bc\in I$ be arbitrary.
Let $M_A(\bc)_S=\mathcal{D}_S/\mathcal{D}_SH_A(\bc)$ denote the left $\mathcal{D}_S$-module defined by
the GKZ-ideal.
Then the homomorphism
\[
R_\Sigma\ot_R M_A(\bc)_S\lra R_\Sigma\ot_R H_\dR(\bc),\quad P\longmapsto P\bullet\omega_{\bc}
\]
of left $\mathcal D_S$-modules is surjective.
\end{lemma}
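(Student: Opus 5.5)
By Lemma \ref{lem:Fermat-dR-lemma}(2), $R_\Sigma\ot_R H_\dR(\bc)$ is free over $R_\Sigma$ with basis $\{\omega_{\bc'}\}_{\bc'\in I_{\bc}}$. The image of the map is a $\mathcal D_S$-submodule containing $\omega_{\bc}$, so the plan is to show that every $\omega_{\bc'}$ with $\bc'\in I_{\bc}$ is obtained from $\omega_{\bc}$ by an operator in $R_\Sigma\ot_R\mathcal D_S$. Surjectivity then follows because the image is an $R_\Sigma$-submodule containing a basis.

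The basic tool is the Gauss--Manin action of the derivations. Differentiating under the residue gives
\[
\partial_{i}\bullet\omega_{\bc''}=-r\,\frac{\bx^{\bc''+\ba_i}}{F^{r+1}}\frac{d\bx}{\bx}=-r\,\omega_{\bc''+\ba_i}\qquad(1\le i\le m),
\]
and similarly $\partial_{m+k}\bullet\omega_{\bc''}=-r\,\omega_{\bc''+d\be_k}$. Starting from $\omega_{\bc}$, we can therefore reach $\omega_{\bc''}$ for every $\bc''$ of the form $\bc+\sum_i n_i\ba_i+d\sum_k l_k\be_k$ with $n_i,l_k\ge0$.

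Now take $\bc'\in I_{\bc}$. By definition $\bar\bc'-\bar\bc\in\sum_i\Z\bar\ba_i$. Since $\bar\ba_i$ has finite order in $(\Z/d\Z)^n$, we may take all coefficients nonnegative: $\bc'\equiv\bc+\sum n_i\ba_i \bmod d\Z^n$ with $n_i\ge0$. So some monomial $\omega_{\bc''}$ with $\bc''\equiv\bc'\bmod d$ is in the image. The remaining task is to reduce $\bc''$ to the representative $\bc'\in I$, whose entries lie in $(0,d)$, inside $H^{n-1}_\dR(U/S)\ot R_\Sigma$.

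This is a Griffiths-type reduction. Over $\Sigma$ the relation $x_k\partial_k F= d z_{m+k}x_k^d+\sum_i a_{ki}z_i\bx^{\ba_i}$ holds, and $z_{m+k}$ is a unit in $R_\Sigma$. Using the exact forms $d\bigl(\bx^{\bb}F^{-r}\,\iota\bigr)$, this lets us rewrite $\bx^{\bb+d\be_k}/F^{r+1}$ as a $\frac{1}{z_{m+k}}$-combination of
\begin{itemize}
\item $\bx^{\bb}/F^{r}$, namely the pole-order reduction, with coefficient involving $b_k/d$, and
\item terms $\bx^{\bb+\ba_i}/F^{r+1}$ that carry a factor $z_i$.
\end{itemize}
These last terms are not exponent-decreasing, so naive induction on $|\bc''|$ fails.

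The main obstacle is controlling these correction terms. I would handle it by filtering by total $z_1,\dots,z_m$-degree, or equivalently by working in the completion along $\Sigma$, and arguing that the reduction matrix is unipotent-plus-nilpotent modulo the ideal $(z_1,\dots,z_m)$. Modulo that ideal we are on the Fermat fiber, where the reduction is exact. There the Fermat relation shows that $\omega_{\bc''}$ is a nonzero rational multiple of $\omega_{\bc'}$: the coefficients are Pochhammer-type factors $\prod (c'_k/d)_{l_k}$, which are nonzero because $0<c'_k<d$.

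Hence the $\mathcal D_S$-span of $\omega_{\bc}$ maps onto the fiber $H_\dR(\bc)\ot R_\Sigma/\mathfrak m$ at every point of $\Sigma$. Since $H_\dR(\bc)$ is locally free, Nakayama's lemma then gives surjectivity over the local rings at points of $\Sigma$, and hence over $R_\Sigma$ after shrinking the Zariski neighbourhood. Care is needed only to ensure the nonvanishing of those rational factors, which is where the condition $\bc'\in I$ is used.
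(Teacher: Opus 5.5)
Your proposal is correct and follows essentially the same route as the paper: apply $\prod_i\partial_i^{p_i}$ to $\omega_{\bc}$ to reach a nonzero multiple of $\omega_{\bc''}$ with $\bc''=\bc+\sum_i p_i\ba_i\equiv\bc'$ modulo $d$. Then, on the Fermat fiber over a point of $\Sigma$, the correction terms vanish, Griffiths' pole-order reduction identifies $\omega_{\bc''}$ with a nonzero rational multiple of $\omega_{\bc'}$ (nonzero because $c''_k\ge c'_k>0$), and local freeness finishes the argument. Your explicit Nakayama-and-shrinking step only spells out what the paper leaves implicit in its ``it is enough to show the relation at $\boldsymbol 0$'', and the detour through filtering by $z$-degree is not needed.
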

\begin{proof}
Fix a point $\boldsymbol{0}\in T$.
For any $\bc'\in I_{\bc}$, we choose $p_i\geq 0$ so that a relation
\[
\bc'\equiv \bc+p_1\ba_1+\cdots+p_m\ba_m\mod d
\]
holds true. 
We put $\bc^{\prime\prime}:=
\bc+p_1\ba_1+\cdots+p_m\ba_m$ and $\bc^{\prime\prime}-\bc'
=(ds_1,\ldots,ds_n)$. Since $0<c'_i<d$, one has $ds_i>-d$ and hence $s_i\in \Z_{\geq0}$ for all $i$.
Let $\partial_i:=\partial/\partial t_i$.
Let $(-)|_{\boldsymbol 0}$ denote the specialzation at the point $\boldsymbol0$.
To prove the lemma, it is enough to show a relation
\begin{equation}\label{eq:a relation in de Rham}
\left(\prod_{i=1}^{m}\partial_i^{p_i}\bullet\omega_{\bc}-q\,\omega_{\bc'}\right)\bigg|_{\boldsymbol 0}
\equiv0
\end{equation}
with some $q\in\Q^\times$ 
in $H^{n-1}_\dR(U_{\boldsymbol0})$.
First, the following equalities hold:
\[
\prod_{i=1}^m\partial_i^{p_i}\bullet\omega_{\bc}=
\prod_{i=1}^m\partial_i^{p_i}\bullet\frac{\bx^{\bc}}{F^r}\frac{d\bx}{\bx}=
e\frac{\bx^{\bc+p_1\ba_1+\cdots+p_m\ba_m}}{F^{r+p_1+\cdots+p_m}}\frac{d\bx}{\bx}
=e\frac{\bx^{\bc^{\prime\prime}}}{F^{r+p_1+\cdots+p_m}}\frac{d\bx}{\bx},
\]
where $e:=(-1)^{p_1+\cdots+p_m}{(r+p_1+\cdots+p_m-1)!}/{(r-1)!}$.
Recall an equality
\[
k\frac{\bx^{\ba}\frac{\partial F}{\partial x_i}}{F^{k+1}}\Omega
= 
a_i\frac{\bx^{\ba}x^{-1}_i}{F^k}\Omega
\]
in $H^{n-1}_\dR(U/S)$, and hence
\[
kd\frac{\bx^{\ba}x_i^{d-1}}{F^{k+1}}\Omega
\bigg|_{\boldsymbol 0}
\equiv 
a_i\frac{\bx^{\ba}x^{-1}_i}{F^k}\Omega\bigg|_{\boldsymbol 0}.
\]
Using this repeatedly, we obtain
\begin{align*}
\frac{\bx^{\bc^{\prime\prime}}}{F^{r+p_1+\cdots+p_m}}\frac{d\bx}{\bx}\bigg|_{\boldsymbol 0}
&=
\frac{\bx^{\bc^{\prime}}x_1^{ds_1-1}\cdots x_n^{ds_n-1}}{F^{r+p_1+\cdots+p_m}}\Omega\bigg|_{\boldsymbol 0}
\equiv
\frac{\bx^{\bc^{\prime}}x_1^{d(s_1-1)-1}\cdots x_n^{ds_n-1}}{F^{r+p_1+\cdots+p_m-1}}\Omega\bigg|_{\boldsymbol 0}\\
&\equiv\cdots\equiv
\frac{\bx^{\bc^{\prime}}}{F^{r+p_1+\cdots+p_m-(s_1+\cdots+s_m)}}\frac{d\bx}{\bx}\bigg|_{\boldsymbol 0}
=\omega_{\bc'}|_{\boldsymbol 0}
\end{align*}
up to a scalar multiplication by $\Q^\times$. This completes the proof of \eqref{eq:a relation in de Rham}.
\end{proof}

\subsection{Periods of Fermat deformations}
\label{sect:periods of Fermat deformations}
Let $T$ be a regular triangulation of $A$.
By the decomposition \eqref{eq:decomposition of H_dR}, there is a $\C$-linear map
\begin{equation}\label{eq:dR to GKZ}
\Sol^{T}(H^{n-1}_\dR(U/S))\os{\cong}{\lra}
\bigoplus_i\Sol^{T}(H_\dR(\bc_i))
\os{\subset}{\lra}\bigoplus_i\Sol^{T}(M_A(\bc_i))
\end{equation}
where the second injective arrow is induced from the homomorphism in Lemma \ref{lem:Negishi-thm}.
Composing it with the natural map $H_{n-1}^B(U_{\bz},\Q)\to\Sol^{T}(H^{n-1}_\dR(U/S))$, one has
\begin{equation}\label{eq:betti}
H^B_{n-1}(U_{\bz},\Q)\os{\subset}{\lra} \bigoplus_i\Sol^T
(M_A(\bc_i)),\quad \delta\longmapsto\left(\int_\delta
\omega_{\bc_i}\right)_i,
\end{equation}
where $U_{\bz}$ denotes the fiber at a point $\bz\in S$ (possibly depends on $T$).
In this section, we compute a $\Q$-basis of the image of the map \eqref{eq:betti}.

\medskip

Let us fix a generic weight vector $\bw=(w_1,\ldots,w_{n+m})\in\R^{m+n}$ such that $w_1,\ldots,w_m\gg\max(w_{m+1},\ldots,w_{m+n})$.
Set $T({\rm Fer}):=S(\bw)$, 
the regular triangulation of $A$ which has a single simplex $\sigma=\{ m+1,\dots,m+n\}$.

For $\bc\in\Q^n_{>0}$ and $\bp=(p_1,\dots,p_m)\in\Z^m$, set
\begin{equation}\label{eq: gamma.bc.bp}
\bg_{\bp}^{\bc}=\begin{bmatrix}
p_1\\ \vdots \\ p_m\\ -\frac1d\bc-\frac1d(p_1\ba_1+\cdots+p_m\ba_m)
\end{bmatrix}.
\end{equation}
This is the unique solution to the equation \eqref{eq:char. exponent} with $\sigma=\{m+1,\dots,m+n\}$.
Let $h:\Z^m\to (\Z/d\Z)^n$ be the homomorphism given by
\begin{equation}\label{eq:morphism h}
    h:\Z^m\ni\bq\mapsto \sum_{i=1}^mq_i\bar\ba_i\in(\Z/d\Z)^n,
\end{equation}
where $\ol\bu\in(\Z/d\Z)^n$ denotes the reduction modulo $d$ for $\bu\in\Z^n$ and let
$\Phi_{T(\Fer)}$ be the set \eqref{eq:series solutions} corresponding to $T=T(\Fer)$.

\begin{lemma}\label{lem:lem-bar a}
If $\bc$ is very generic,
then the map
$\Z^m/\mathrm{Ker}(h)\to \Phi_{T(\Fer)}$ given by $\bq\mapsto
\varphi(\bg_{\bq}^{\bc};\bz)$ is well-defined and bijective.
In particular, $\{\varphi(\bg_{\bq_i}^{\bc};\bz);1\leq i\leq \ell\}$
forms a $\C$-basis of $\Sol^{T(\Fer)}(M_A(\bc))$ for a representative $\bq_1,\ldots,\bq_\ell\in\Z^m$
of $\Z^m/\mathrm{Ker}(h)$ by Theorem \ref{thm:eq:condition *}.
\end{lemma}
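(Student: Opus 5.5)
The plan is to compare the Gamma series $\varphi(\bg_{\bq}^{\bc};\bz)$ for different $\bq$ and then to count. In $T(\Fer)$ there is only one maximal simplex, $\sigma=\{m+1,\dots,m+n\}$, so every element of $\Phi_{T(\Fer)}$ has the form $\varphi(\bg^{\bc}_{\bp};\bz)$ with $\bp\in\Z^m$ and $\bg^{\bc}_{\bp}$ given by \eqref{eq: gamma.bc.bp}. Hence the map $\Z^m\to\Phi_{T(\Fer)}$, $\bq\mapsto\varphi(\bg^{\bc}_{\bq};\bz)$, is surjective by definition. The task is therefore to show that $\varphi(\bg^{\bc}_{\bq};\bz)=\varphi(\bg^{\bc}_{\bq'};\bz)$ holds exactly when $\bq-\bq'\in\mathrm{Ker}(h)$.

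\emph{Well-definedness.} Suppose $h(\bq-\bq')=0$. Then $\sum_i(q_i-q'_i)\ba_i\in d\Z^n$, so the vector $\bu$ whose first $m$ entries are $q_i-q'_i$ and whose last $n$ entries are $-\frac1d\sum_i(q_i-q_i')\ba_i$ is an integer vector. It satisfies $A\bu=0$, i.e. $\bu\in L_A$, and $\bg^{\bc}_{\bq}=\bg^{\bc}_{\bq'}+\bu$. Since the Gamma series \eqref{eq:def of Gamma series} sums over the full lattice $L_A$, shifting the exponent by an element of $L_A$ does not change the series. Hence $\varphi(\bg^{\bc}_{\bq};\bz)=\varphi(\bg^{\bc}_{\bq'};\bz)$.

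\emph{Injectivity.} Suppose $h(\bq-\bq')\neq0$. Then $\bg^{\bc}_{\bq}-\bg^{\bc}_{\bq'}\notin\Z^{m+n}$, because the last $n$ coordinates differ by a non-integer. Every monomial exponent in $\varphi(\bg^{\bc}_{\bq};\bz)$ lies in $\bg^{\bc}_{\bq}+\Z^{m+n}$, so the two series have disjoint supports. Neither series is zero: very genericity means the $\sigma$-coordinates are non-integers, so the $\Gamma$ factors in those coordinates have no poles, and for $\bu\in L_A$ with $u_i=-q_i$ ($i\le m$) the factors $1/\Gamma(1)$ are nonzero. This gives a nonzero term, and therefore the two series are distinct. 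Equivalently, they have different characters under the local monodromy \eqref{eq:local monodromy}.

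\emph{Counting and conclusion.} Bijectivity follows from the two steps above, and $\sharp(\Z^m/\mathrm{Ker}(h))=\sharp\langle\ol\ba_1,\ldots,\ol\ba_m\rangle$. By Lemma \ref{lem:FermatDeform.lemma}(2) this number equals $\vol(A)$, which matches the count in Theorem \ref{thm:eq:condition *}. That theorem then gives the basis statement for $\Sol^{T(\Fer)}(M_A(\bc))$. The only delicate point is the nonvanishing of the series; the rest is bookkeeping with $L_A$.
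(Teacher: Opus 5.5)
Your argument is correct in outline, but it takes a different route from the paper, and the step you flag as delicate needs a small repair.

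\textbf{The nonvanishing step.} A vector $\bu\in L_A$ with $u_i=-q_i$ for all $i\le m$ exists only when $\sum_i q_i\ba_i\in d\Z^n$, i.e.\ when $h(\bq)=0$. For every other class no such $\bu$ exists. Instead take $\bu=(dk_1,\ldots,dk_m,-\sum_i k_i\ba_i)$, which lies in $L_A$, with integers $k_i$ chosen so that $q_i+dk_i\ge 0$. Equivalently, use your well-definedness step to replace $\bq$ by $\bq+d\boldsymbol{k}$ with nonnegative entries and look at the term $\bu=0$. The first $m$ factors of $1/\Gamma(1+\bu+\bg^{\bc}_{\bq})$ are then $1/(q_i+dk_i)!\neq 0$. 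The last $n$ factors are nonzero because those coordinates equal the last coordinates of $\bg^{\bc}_{\bq+d\boldsymbol{k}}$, which are non-integers by very genericity. With this fix, disjointness of the cosets $\bg^{\bc}_{\bq}+\Z^{m+n}$ gives injectivity. Equivalently, the two series have distinct characters under the loops around $z_{m+1},\dots,z_{m+n}$.

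\textbf{Comparison with the paper.} The paper never proves injectivity directly. After well-definedness and surjectivity it argues by counting:
\begin{itemize}
\item $\sharp\Phi_{T(\Fer)}=\vol(A)$, because $\Phi_{T(\Fer)}$ is a basis (Theorem \ref{thm:eq:condition *}) of a space of dimension $\vol(A)$ (Theorem \ref{thm:SST limits span the solution space});
\item $\sharp(\Z^m/\mathrm{Ker}(h))=\sharp\langle\ol\ba_1,\ldots,\ol\ba_m\rangle=\vol(A)$ by Lemma \ref{lem:FermatDeform.lemma}(2).
\end{itemize}
A surjection between finite sets of the same size is then a bijection.

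The paper's route is shorter given the imported theorems, but it relies on the holonomic rank and the volume identity. Your direct argument is more elementary: bijectivity uses only the shape of the exponents $\bg^{\bc}_{\bq}$ and very genericity. It also yields $\sharp\Phi_{T(\Fer)}=\sharp\langle\ol\ba_1,\ldots,\ol\ba_m\rangle$ as an output. For the basis statement you then only need Theorem \ref{thm:eq:condition *}, since the representatives enumerate $\Phi_{T(\Fer)}$ without repetition. The volume count becomes a consistency check rather than an ingredient.
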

\begin{proof}
If $\bq-\bq'\in\mathrm{Ker}(h)$, then $\bg_{\bq}^{\bc}-\bg^{\bc}_{\bq'}
\in L_A={\rm Ker}_\Z(A)$, and so $\varphi(\bg_{\bq}^{\bc};\bz)=\varphi(\bg_{\bq'}^{\bc};\bz)$.
Therefore, one has a well-defined map
$\phi:\Z^m/\mathrm{Ker}(h)\to \Phi_{T(\Fer)}$ given by $\bq\mapsto
\varphi(\bg_{\bq}^{\bc};\bz)$, and it is onto
by definition of $\Phi_{T(\Fer)}$.
Let
$\Phi_{T(\Fer)}$ be the set \eqref{eq:series solutions}.
Since $\bc$ satisfies the condition in Theorem \ref{thm:eq:condition *}
and $\bc\in{\rm pos}(A)$,
the set $\Phi_{T(\Fer)}$ forms a $\C$-basis of $\Sol^{T(\Fer)}(M_A(\bc))$, and 
hence the cardinality $\sharp\Phi_{T(\Fer)}$ equals $\vol(A)$ 
by Theorem \ref{thm:SST limits span the solution space}.
On the other hand, the cardinality  
$\sharp(\Z^m/{\mathrm{Ker}}(h))=\sharp\langle\bar\ba_1,\ldots,\bar\ba_m\rangle$ also
equals $\vol(A)$ by Lemma \ref{lem:FermatDeform.lemma}.
Therefore the map $\phi$ is bijective.
\end{proof}

Put $\RNAfK:=\pi_1({\rm Log}^{-1}(r\bw_T),\bz)$ the fundamental group of the real torus for $T=T(\Fer)$
as in \eqref{eq:local monodromy}, and let
\begin{equation}\label{eq:local monodromy Fer}
\RNAfK\lra \mathrm{GL}(\Sol^{T(\Fer)}({\mathcal M}))
\end{equation}
be the local monodromy on the solution space.
Suppose that 
$\bc\in\R_{>0}^n$ is very generic (i.e. the condition in Theorem \ref{thm:eq:condition *}).
Let $Q_i\in \RNAfK$
be the counter-clockwise loop around the coordinate $z_{m+i}$.
Let
$\Phi_{T(\Fer)}=\{\varphi(\bg_{\bq_i}^{\bc};\bz)\}_i$
be the $\C$-basis of 
$\Sol^{T(\Fer)}(M_A(\bc))$ 
in Lemma \ref{lem:lem-bar a}.
For each $\bq\in\Z^m$,
the monodromy operator $Q_{k}$ acts on
$\varphi(\bg_{\bq}^{\bc};\bz)$
by multiplication by
$\exp(2\pi i\,u_k/d)$ where $u_k$ is the $k$-th component of
$-\bc-(q_1\ba_1+\cdots+q_m\ba_m)$.
Therefore, any simultaneous eigenspace of $\Sol^{T({\Fer})}(M_A(\bc))$ with respect to
the group $\RNAfK$ is one-dimensional by Lemma \ref{lem:lem-bar a}.
When $\bc\in\Q^n_{>0}$ is arbitrary, 
let $\bc(\varepsilon):=\bc+\varepsilon\bv$ with $\bv\in\Q^n$ a generic vector.
Then it follows from Theorem \ref{thm:SST limits span the solution space} that
the SST-limit gives the $\vol(A)$-dimensional solutions,
\[
\lim_{\varepsilon\to0}\Sol^{T(\Fer)}(M_A(\bc(\varepsilon)))=\Sol^{T(\Fer)}(M_A(\bc)).
\]
Every simultaneous eigenspace of $\Sol^{T({\rm Fer})}(M_A(\bc(\varepsilon)))$ are also one-dimensional,
and hence 
the limits
\begin{equation}\label{eq:SST-series solution}
\lim_{\varepsilon\to0}\varepsilon^{k_i}
\varphi(\bg_{\bq_i}^{\bc(\varepsilon)};\bz),\quad(1\leq i\leq \ell)
\end{equation}
with $k_i\in \Z$ suitably chosen
give a $\C$-basis of the SST-limit (Remark \ref{remark:SST-limit defn}).
In particular, the basis \eqref{eq:SST-series solution} does not involve the logarithmic function.

\medskip

Summing up the above, we have the following lemma.
\begin{lemma}\label{lem:multiplicity.one}
Let $\bc\in \Q^n_{>0}$ be arbitrary.
Then, 
every simultaneous eigenspace of $\Sol^{T({\rm Fer})}(M_A(\bc))$ with respect to
$\RNAfK$ is one-dimensional, and
its basis is spanned by (log free) series solutions
\eqref{eq:SST-series solution}.
\end{lemma}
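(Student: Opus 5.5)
We plan to prove Lemma \ref{lem:multiplicity.one} by a dimension count: the $\ell$ characters of $\RNAfK$ that occur are pairwise distinct, and each occurs in a log-free SST-limit.

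\emph{Step 1: the case of a very generic parameter.} For very generic $\bc$, Lemma \ref{lem:lem-bar a} gives the basis $\{\varphi(\bg_{\bq_i}^{\bc};\bz)\}_{1\le i\le\ell}$ indexed by $\Z^m/\mathrm{Ker}(h)$. Each basis element is a simultaneous eigenvector of $\RNAfK$. Since the variables $z_1,\dots,z_m$ carry integer exponents $q_j+u_j$, the loops around them act trivially. The loop $Q_k$ acts by $\exp(2\pi\ii\, u_k/d)$, where $u_k$ is the $k$-th component of $-\bc-\sum_j q_j\ba_j$. So the character attached to $\bq$ is determined by the class of $-\bc-\sum_jq_j\ba_j$ in $(\R/d\Z)^n$. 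Two indices $\bq,\bq'$ give the same character exactly when $\sum_j(q_j-q'_j)\ba_j\equiv0 \bmod d$, that is, when $\bq-\bq'\in\mathrm{Ker}(h)$. Hence the $\ell=\vol(A)$ characters are pairwise distinct, and every simultaneous eigenspace is one-dimensional.

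\emph{Step 2: arbitrary $\bc\in\Q^n_{>0}$.} Put $\bc(\varepsilon)=\bc+\varepsilon\bv$. The point to check is that the characters depend continuously on $\varepsilon$ and remain distinct at $\varepsilon=0$. Indeed, the character attached to $\bq$ at $\varepsilon=0$ is $\bigl(\exp(2\pi\ii\,u_k/d)\bigr)_k$ with $u_k$ computed from $\bc$. Its distinctness modulo $\mathrm{Ker}(h)$ is the same computation as in Step 1, because that computation only involves the difference $\sum_j(q_j-q'_j)\ba_j$ and not $\bc$ itself.

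By Theorem \ref{thm:SST limits span the solution space} (applicable since ${\rm pos}(A)=\R^n_{\ge0}$ and $\bc$ is interior), the SST-limit is all of $\Sol^{T(\Fer)}(M_A(\bc))$, which has dimension $\ell$. By Remark \ref{remark:SST-limit defn}, limits can be taken character by character, and each $\chi$-component at level $\varepsilon$ is spanned by a single series $\varphi(\bg_{\bq_i}^{\bc(\varepsilon)};\bz)$. So any limit in that component has the form $\lim C(\varepsilon)\varphi(\bg_{\bq_i}^{\bc(\varepsilon)};\bz)$.

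\emph{Step 3: the limits are log free and span.} Expand $C(\varepsilon)\varphi$ in $\varepsilon$ as in \eqref{eq:lemma 2.5.eq1}. Let $-k_i$ be the order of vanishing in $\varepsilon$ of the coefficients, which is finite because the limit space is nonzero. Then the only nonzero limits are multiples of $\lim\varepsilon^{k_i}\varphi(\bg_{\bq_i}^{\bc(\varepsilon)};\bz)$. Logarithms arise only from expanding $\bz^{\varepsilon\boldsymbol\delta}$ at positive powers of $\varepsilon$, and these are killed in the limit after normalizing to the leading order. Hence the limits are series without logarithms.

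Each character therefore contributes at most one dimension. There are exactly $\ell$ characters and the total dimension is $\ell$, so each contributes exactly one, spanned by \eqref{eq:SST-series solution}.

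The main obstacle is Step 3: justifying that the limit of a single rescaled series is nonzero and log free. Logs could only arise from cancellation between several series sharing a character, and the distinctness established in Steps 1 and 2 rules this out.
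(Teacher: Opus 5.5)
Your proposal is correct and follows the paper's argument: the characters are pairwise distinct by Lemma \ref{lem:lem-bar a}, the SST-limit is the whole solution space by Theorem \ref{thm:SST limits span the solution space}, and limits are taken character by character via Remark \ref{remark:SST-limit defn}, which yields the log-free rescaled series $\lim_{\varepsilon\to0}\varepsilon^{k_i}\varphi(\bg_{\bq_i}^{\bc(\varepsilon)};\bz)$. You also make explicit a point the paper leaves implicit: the characters remain distinct at $\varepsilon=0$. This is what actually justifies applying the remark one series at a time, since distinct characters at $\varepsilon\neq0$ alone would not rule out logarithms.
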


For $1\leq i\leq m$, we set $\bb_i:=(    0 , \dots, d , \dots,0 , -a_{1i},\dots,-a_{ni})\in\Z^{m+n}$
where the number $d$ is in the $i$-th entry.
Let $\tilde S=\Spec R[z_{m+1}^{1/d},\ldots,z_{m+n}^{1/d}]\to S$ 
be a covering map, and let $\tilde X\to\tilde S$ 
(resp. $\tilde U\to \tilde S$) 
be the pull-back of the family $X\to S$ (resp. $U\to S$).
One sees that $\tilde X$ is given by an equation
\[
\sum_{i=1}^m\bz^{\frac{\bb_i}d}\bx^{\ba_i}+
x_1^d+\cdots+x_n^d=0,
\]
and the family $\tilde X\to\tilde S$ has a good reduction over a locus $z_{1}=\cdots=z_m=0$.
Since $\tilde X\to \tilde S$ is a topological fibration,
one has the deformation
$\delta_{\bz}\in H^B_{n-1}(U_{\bz},\Q)$ 
of the homology cycle $\delta_0$ constructed in Lemma \ref{lem:Fermat-cycle}
for each point $\bz\in S$ such that $0<|z_i|\ll |z_j|$ for all $1\leq i\leq m$ and for all $m+1\leq j\leq m+n$.

Now, we state the key lemma of this section.
To do so, we introduce some notations.
Let $\bc\in (\Z_{>0})^n$ be an arbitrary vector such that $|\bc|=dr$ for some $r\in\Z$.
Let $h:\Z^m\to (\Z/d\Z)^n$ be the morphism \eqref{eq:morphism h}, and 
we fix representatives $\bq_1,\ldots,\bq_\ell\in \Z^m$ of the set $\Z^m/\mathrm{Ker}(h)$.
We define a subset  $P_{\bc}\subset\{\bq_1,\ldots,\bq_\ell\}$ by the following condition:
\begin{equation}\label{eq:P_c}
\text{
 $\bp\in P_{\bc}$ if and only if
no entry of 
$\bar\bc+h(\bp)\in(\Z/d\Z)^n$ is zero.
}
\end{equation}

\begin{lemma}\label{lem:key-lem} 
Under the notation above, one has an expansion
\begin{equation}\label{eq:key-lem}
\int_{\delta_{\bz}}\omega_{\bc}=\frac{(-1)^r}{(r-1)!}(2\pi\ii)^n
\sum_{\bp\in P_{\bc}}\varphi(\bg_{\bp}^{\bc};\bz).
\end{equation}
\end{lemma}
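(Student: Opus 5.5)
The plan is to expand $\omega_{\bc}$ around the Fermat locus $\Sigma$ as a power series in $z_1,\dots,z_m$, integrate term by term over $\delta_{\bz}$, and evaluate each term with Lemma~\ref{lem:Fermat-cycle} after rescaling the coordinates. The resulting coefficients should be exactly those of the Gamma series $\varphi(\bg^{\bc}_{\bp};\bz)$.

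\textbf{Expansion.} Write $F=F_0+G$ with $F_0:=\sum_{i=1}^n z_{m+i}x_i^d$ and $G:=\sum_{i=1}^m z_i\bx^{\ba_i}$. For $0<|z_i|\ll|z_j|$ ($i\le m<j$), the cycle $\delta_{\bz}$ can be taken to be a small deformation of the rescaled $\delta_0$, on which $|G/F_0|<1$ uniformly. So we may expand
\[
\frac{1}{F^r}=\sum_{\bp\in\Z_{\geq0}^m}(-1)^{|\bp|}\frac{(r+|\bp|-1)!}{(r-1)!\,\bp!}\,
\frac{\bz^{\bp}\,\bx^{p_1\ba_1+\cdots+p_m\ba_m}}{F_0^{\,r+|\bp|}},
\]
with $\bp!:=\prod_i p_i!$, and interchange sum and integral by uniform convergence on the compact cycle.

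\textbf{Evaluating each term.} Substitute $x_i\mapsto z_{m+i}^{-1/d}x_i$, with the branches of $z_{m+i}^{1/d}$ fixed by the covering $\tilde S$ that defines $\delta_{\bz}$. This turns $F_0$ into $\Fereq$ and carries $\delta_{\bz}$ to $\delta_0$. Put $\bc_{\bp}:=\bc+\sum_i p_i\ba_i$, a positive vector with $|\bc_{\bp}|=d(r+|\bp|)$. The term becomes
\[
\prod_{i=1}^n z_{m+i}^{-(\bc_{\bp})_i/d}\int_{\delta_0}\omega_{\bc_{\bp},\Fer}
=(2\pi\ii)^n\frac{(-1)^{r+|\bp|}}{\Gamma(r+|\bp|)\prod_i\Gamma\bigl(1-(\bc_{\bp})_i/d\bigr)}\prod_{i=1}^n z_{m+i}^{-(\bc_{\bp})_i/d}
\]
by Lemma~\ref{lem:Fermat-cycle}. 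That lemma applies to every positive $\bc_{\bp}$, including entries $\ge d$. Multiplying by the expansion coefficient, the factors $(r+|\bp|-1)!$ and $(-1)^{|\bp|}$ cancel. What remains is
\[
\frac{(-1)^r}{(r-1)!}(2\pi\ii)^n\,\frac{\bz^{\bg^{\bc}_{\bp}}}{\Gamma(1+\bg^{\bc}_{\bp})},
\]
with $\bg^{\bc}_{\bp}$ as in \eqref{eq: gamma.bc.bp}.

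\textbf{Regrouping.} Partition $\Z^m_{\geq0}$ according to the classes $\bq_1,\dots,\bq_\ell$ of $\Z^m/\mathrm{Ker}(h)$. Each $\bp$ in the class of $\bq$ has $\bg^{\bc}_{\bp}=\bg^{\bc}_{\bq}+\bu$ for some $\bu\in L_A$, and conversely. The missing terms of $\varphi(\bg^{\bc}_{\bq};\bz)$ are those with some $p_i<0$. They vanish because $1/\Gamma(1+p_i)=0$ there, so the class sum is exactly $\varphi(\bg^{\bc}_{\bq};\bz)$.

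If $\bq\notin P_{\bc}$, then some entry $(\bc_{\bp})_k/d$ is an integer for all $\bp$ in the class, and it is $\ge1$ because $\bc_{\bp}$ is positive. Hence $1/\Gamma(1-(\bc_{\bp})_k/d)=0$ and the whole class contributes nothing. This leaves exactly the sum over $P_{\bc}$ in \eqref{eq:key-lem}.

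\textbf{Main obstacle.} I expect the delicate step to be the analytic bookkeeping rather than the algebra. One must check that the rescaling by $z_{m+i}^{-1/d}$, with the branches fixed by $\tilde S$, really carries $\delta_{\bz}$ to $\delta_0$. One must also check that the geometric series converges uniformly on a representative of $\delta_{\bz}$ in the region $0<|z_i|\ll|z_j|$, which justifies the termwise integration. I would handle both together by working on the cover $\tilde S$. There the family is a genuine deformation of the Fermat hypersurface, smooth at $z_1=\cdots=z_m=0$, so a fixed compact representative of $\delta_0$ stays away from $X$ for small parameters.
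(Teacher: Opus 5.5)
Your proposal is correct and follows essentially the same route as the paper: rescale $x_i=z_{m+i}^{-1/d}y_i$, expand $F^{-r}$ multinomially, evaluate each term by Lemma~\ref{lem:Fermat-cycle}, and regroup the terms by classes of $\Z^m/\mathrm{Ker}(h)$, discarding the classes outside $P_{\bc}$ because a factor $1/\Gamma$ vanishes. The only differences are that you expand before rescaling, which is cosmetic, and that you spell out the uniform-convergence justification for termwise integration, which the paper leaves implicit.
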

\begin{proof}
By a change of coordinates $x_i=z_{m+i}^{-1/d}y_i$, the left-hand side of \eqref{eq:key-lem} reads
\begin{equation}\label{eq:y-integral}
\int_{\delta_{\bz}} \frac{z_{m+1}^{-\frac{c_1}{d}}\dots z_{m+n}^{-\frac{c_n}{d}}\,\by^{\bc-1}}{(y_1^d+\cdots+y_n^d+
\sum_{i=1}^m\bz^{\frac1d\bb_i}\by^{\ba_i})^r}\Omega_y,    
\end{equation}
where $\Omega_y:=\sum_{i=1}^n(-1)^iy_idy_1\wedge\cdots\widehat{dy_i}\cdots \wedge dy_n$.
In view of the expansion
\[
    (y_1^d+\cdots+y_n^d+
\sum\bz^{\frac1d\bb_i}\by^{\ba_i})^{-r}=\sum_{\bp\in \Z_{\geq 0}^m}(-1)^{|\bp|}
\frac{\Gamma(r+|\bp|)}{\Gamma(r){p_1!\cdots p_m!}}\prod_{i=1}^m\bz^{\frac{p_i}{d}\bb_i}\frac{\by^{\sum_{i=1}^mp_i\ba_i}}
{(y^d_1+\cdots+y^d_n)^{r+|\bp|}}
\]
and Lemma \ref{lem:Fermat-cycle}, we obtain that the integral \eqref{eq:y-integral} equals
\begin{align*}
    &\frac{(-1)^r}{(r-1)!}(2\pi\ii)^n\prod_{i=1}^nz_{m+i}^{-\frac{c_i}{d}}
\sum_{\bp\in\Z_{\geq0}^m}\frac{\prod_{i=1}^m\bz^{\frac{p_i}{d}\bb_i}}
{p_1!\cdots p_m!\Gamma\left(1-\frac{1}{d}(\bc+\sum_{i=1}^m p_i\ba_i)\right)}\\
    =&\frac{(-1)^r}{(r-1)!}(2\pi\ii)^n
\sum_{\bp\in\Z_{\geq0}^m}\frac{\bz^{\bg^{\bc}_{\bp}}}
{\Gamma(1+\bg^{\bc}_{\bp})}
    =\frac{(-1)^r}{(r-1)!}(2\pi\ii)^n
    \sum_{\bp\in\Z^m}
\frac{\bz^{\bg^{\bc}_{\bp}}}
{\Gamma(1+\bg^{\bc}_{\bp})}
\end{align*}
where $\bg^{\bc}_{\bp}=(\bp,-\frac1d(\bc+\sum p_i\ba_i))$ is the vector \eqref{eq: gamma.bc.bp}.
Since 
$\bg^{\bc}_{\bp}-\bg^{\bc}_{\bp'}\in L_A\iff \bar\bc+h(\bp)=\bar\bc+h(\bp')$, one has
\[
\sum_{\bp\in\Z^m}
\frac{\bz^{\bg^{\bc}_{\bp}}}
{\Gamma(1+\bg^{\bc}_{\bp})}=
\sum_{i=1}^\ell\sum_{\bu\in L_A}
\frac{\bz^{\bg^{\bc}_{\bq_i}+\bu}}
{\Gamma(1+\bg^{\bc}_{\bq_i}+\bu)}=\sum_{i=1}^\ell
\varphi(\bg_{\bq_i}^{\bc};\bz).
\]
If some entry of $\bar\bc+h(\bp)$ is zero, then $1/{\Gamma(1+\bg^{\bc}_{\bp})}=0$
and therefore $\varphi(\bg_{\bq_i}^{\bc};\bz)=0$ unless $\bq_i\in P_{\bc}$.
In summary, the integral \eqref{eq:y-integral} equals
the right hand side of \eqref{eq:key-lem}.
\end{proof}

We note that the set $\{\varphi(\bg_{\bp}^{\bc};\bz);\bp\in P_{\bc}\}$ does not depend on the choice of
the representatives $\bq_1,\ldots,\bq_\ell$
by Lemma \ref{lem:lem-bar a}.

\begin{lemma}\label{lem:image of the integration map}
Let $G$ be the finite abelian group in the beginning of \S \ref{sect:Fermat deformation}.
Let $\langle \RNAfK,G\rangle$ be 
the subgroup of $\mathrm{GL}(H^B_{n-1}(U_{\bz},\Q))$
generated by the images of $\RNAfK$ and $G$, and let
$\Q[\RNAfK,G]$ denote its group ring.
Note $\langle \RNAfK,G\rangle$ is commutative.
Then
\[H^B_{n-1}(U_{\bz},\Q)=\Q[\RNAfK,G]\cdot\delta_{\bz}.
\]
 \end{lemma}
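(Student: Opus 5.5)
The plan is to dualize via the period map \eqref{eq:betti} and reduce to a character computation. The commuting group $\langle\RNAfK,G\rangle$ acts on $V:=H^B_{n-1}(U_{\bz},\Q)$. Since the local monodromy of the Fermat deformation factors through a finite quotient, $\Q[\RNAfK,G]$ acts on $V$ through a finite abelian group, hence semisimply. After extending scalars to $\C$, $V_\C$ splits into simultaneous eigenspaces $V_\C(\chi)$, and $\Q[\RNAfK,G]\cdot\delta_{\bz}=V$ is equivalent (checking over $\C$, since the cyclic submodule is defined over $\Q$) to two conditions: every $V_\C(\chi)$ is at most one-dimensional, and the $\chi$-component of $\delta_{\bz}$ is nonzero whenever $V_\C(\chi)\neq0$. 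So the proof splits into a multiplicity-one step and a nonvanishing step.

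For multiplicity one, embed $V_\C$ equivariantly into $\bigoplus_i\Sol^{T(\Fer)}(M_A(\bc_i))$ via \eqref{eq:betti}. The map is injective because $\{\omega_{\bc}\}$ spans the de Rham cohomology near $\Sigma$ (Lemma \ref{lem:Fermat-dR-lemma} and Lemma \ref{lem:Negishi-thm}), so a cycle with all periods zero is zero. It is equivariant because $G$ acts on $\omega_{\bc_i}$ by the character $\boldsymbol\zeta^{\bc_i}$ and $\RNAfK$ acts on both sides by analytic continuation. The $G$-character separates the summands indexed by $i$, since distinct classes in $I/\sim$ give distinct characters of $G$. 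Within a fixed $i$, Lemma \ref{lem:multiplicity.one} says every simultaneous $\RNAfK$-eigenspace of $\Sol^{T(\Fer)}(M_A(\bc_i))$ is one-dimensional. Hence each $V_\C(\chi)$ embeds into a one-dimensional space.

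For nonvanishing, compute the components of $\delta_{\bz}$ using Lemma \ref{lem:key-lem}. Its image in the $i$-th summand is $\frac{(-1)^r}{(r-1)!}(2\pi\ii)^n\sum_{\bp\in P_{\bc_i}}\varphi(\bg^{\bc_i}_{\bp};\bz)$. The series $\varphi(\bg^{\bc_i}_{\bp};\bz)$ for distinct classes $\bp$ have distinct $\RNAfK$-characters, because $Q_k$ acts by $\exp(2\pi\ii u_k/d)$ and $\bp\mapsto\bar\bc_i+h(\bp)$ is injective on $\Z^m/\mathrm{Ker}(h)$. So the $\chi$-component of $\delta_{\bz}$ is a nonzero multiple of the single nonzero series $\varphi(\bg^{\bc_i}_{\bp};\bz)$ with $\bp\in P_{\bc_i}$ of that character. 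It remains to show that any $\chi$ with $V_\C(\chi)\ne0$ corresponds to such a $\bp\in P_{\bc_i}$.

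This last point is the main obstacle. The image of $V_\C$ need not be all of $\Sol^{T(\Fer)}$; characters with some entry of $\bar\bc_i+h(\bp)$ zero must be excluded. I would handle this by a dimension count rather than by excluding characters directly. The count $\dim V=\sharp I=\sum_i\sharp I_{\bc_i}$ comes from Lemma \ref{lem:Fermat-dR-lemma}. For each $i$, the map $\bp\mapsto\bar\bc_i+h(\bp)$ sends $P_{\bc_i}$ bijectively onto the residues of $I_{\bc_i}$, so $\sharp P_{\bc_i}=\sharp I_{\bc_i}$. Each character occurs in $V_\C$ with multiplicity at most one, and every character realized by some $\bp\in P_{\bc_i}$ occurs, since $\delta_{\bz}$ has a nonzero component there. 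Counting, $\dim V\geq\sum_i\sharp P_{\bc_i}=\dim V$, so no other characters occur. The cyclic module $\Q[\RNAfK,G]\cdot\delta_{\bz}$ therefore has full dimension, which proves the lemma.
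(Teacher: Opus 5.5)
Your proposal is correct and follows essentially the paper's argument: the period map \eqref{eq:betti} is injective, the $G$-characters separate the summands indexed by $i$, and Lemma \ref{lem:key-lem} projected onto the distinct $\RNAfK$-eigencharacters gives $\sharp P_{\bc_i}=\sharp I_{\bc_i}$ independent vectors; the count against $\sharp I=\dim H^B_{n-1}(U_{\bz},\Q)$ then finishes the proof. The one difference is your separate multiplicity-one step via Lemma \ref{lem:multiplicity.one}, which is harmless but unnecessary: the paper (and, implicitly, your own final count) shows directly that $\dim\Q[\RNAfK,G]\cdot\delta_{\bz}\geq\sharp I$, and that inequality already forces equality.
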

\begin{proof}
Recall \eqref{eq:dR to GKZ}, which induces 
\[
H_{n-1}^B(U_{\bz},\C)\os{\cong}{\lra}
\bigoplus_i\Sol^{T(\Fer)}(H_{\dR}(\bc_i))
\subset\bigoplus_i
\Sol^{T(\Fer)}(M_A(\bc_i)).
\]
By Lemma \ref{lem:Fermat-dR-lemma} (1),
it is enough to show $\dim\C[\RNAfK,G]\cdot\delta_{\bz}=\sharp I$.
To see this, we put 
\[
W_i:=\C[\RNAfK]\cdot\int_{\delta_{\bz}}\omega_{\bc_i}
\subset 
\Sol^{T(\Fer)}(M_A(\bc_i)).
\]
The image of $\C[\RNAfK,G]\cdot\delta_{\bz}$ is the direct sum
of $W_i$'s (in view of the action of $G$).
Therefore it is enough to show $\dim W_i\geq \sharp I_{\bc_i}$
by Lemma \ref{lem:Fermat-dR-lemma} (2).

Let $P_{\bc_i}=\{\bp_1,\ldots,\bp_s\}$ be the set as in Lemma \ref{lem:key-lem}.
We note $s=\sharp I_{\bc_i}$ by definition of $I_{\bc_i}$ in \eqref{eq:definition:Ic}.
Let $Q_j\in \RNAfK$ be
the monodromy operator around the coordinate $z_{m+j}$. 
This acts on 
the series $\varphi(\bg_{\bp_i}^{\bc};\bz)$ by a scaler multiplication by
$$
\alpha_{\bp_i,j}:=\exp\left(\frac{2\pi\ii}{d}(c_j+p_{i,1}a_{1j}+\cdots+p_{i,m}a_{mj})\right).
$$
Hence the operator
$$
\widehat{Q}_i:=\prod_{j=1}^m(Q_j-\alpha_{\bp_i,j})\in \C[\RNAfK]
$$
annihilates any $\varphi(\bg_{\bp_{i'}}^{\bc};\bz)$ ($i'\neq i$), and 
acts on $\varphi(\bg_{\bp_i}^{\bc};\bz)$ by a non-zero scalar multiplication.
By \eqref{eq:key-lem}, it turns out that $W_i$
contains $\varphi(\bg_{\bp_i}^{\bc};\bz)$ for every $1\leq i\leq s$, and hence
$\dim W_i\geq s$ as required.
\end{proof}
The proof of Lemma \ref{lem:image of the integration map} also shows the following result.
\begin{theorem}\label{thm:image of the integral of omega_c}
Let $P_{\bc}$ be the set as in Lemma \ref{lem:key-lem}.
The image of the map
\begin{equation}\label{eq:integral of omega_c}
H_{n-1}^B(U_{\bz},\Q)\ot\Q(\zeta_d)\lra \Sol^{T(\Fer)}(M_A(\bc)),\quad
\delta\longmapsto\int_\delta\omega_{\bc}    
\end{equation}
has a $\Q(\zeta_d)$-basis
$\{(2\pi\ii)^{n}\varphi(\bg^{\bc}_{\bp};\bz)\}_{\bp\in P_{\bc}}$.    
\end{theorem}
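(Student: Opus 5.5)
The plan is to run the argument of Lemma \ref{lem:image of the integration map} again, this time tracking the field of coefficients. We prove two inclusions: the image lies in the $\Q(\zeta_d)$-span of $\{(2\pi\ii)^n\varphi(\bg^{\bc}_{\bp};\bz)\}_{\bp\in P_{\bc}}$, and every such element lies in the image. Linear independence of these series over $\C$, hence over $\Q(\zeta_d)$, follows from Lemma \ref{lem:multiplicity.one}: they are nonzero and lie in distinct simultaneous eigenspaces of the local monodromy $\RNAfK$.

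For the first inclusion, Lemma \ref{lem:image of the integration map} says every $\delta\in H^B_{n-1}(U_{\bz},\Q)$ has the form $\theta\cdot\delta_{\bz}$ with $\theta\in\Q[\RNAfK,G]$. So $\int_\delta\omega_{\bc}$ is obtained from $\int_{\delta_{\bz}}\omega_{\bc}$ by applying monodromy operators and elements of $G$.
\begin{itemize}
\item An element $g\in G$ acts on $\omega_{\bc}$ by the character $\boldsymbol\zeta\mapsto\boldsymbol\zeta^{\bc}$, which takes values in $\mu_d\subset\Q(\zeta_d)$.
\item Each loop $Q_j$ multiplies $\varphi(\bg^{\bc}_{\bp};\bz)$ by $\alpha_{\bp,j}\in\mu_d$.
\item The loops around $z_1,\dots,z_m$ act trivially on these series, because the exponents $p_i$ are integers.
\end{itemize}
Combining these with the expansion \eqref{eq:key-lem}, whose prefactor $(-1)^r/(r-1)!$ is rational, we get $\int_\delta\omega_{\bc}\in\sum_{\bp\in P_{\bc}}\Q(\zeta_d)(2\pi\ii)^n\varphi(\bg^{\bc}_{\bp};\bz)$.

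For the second inclusion, fix $\bp_i\in P_{\bc}$ and use the projector $\widehat Q_i=\prod_j(Q_j-\alpha_{\bp_i,j})$ from the proof of Lemma \ref{lem:image of the integration map}. Its coefficients lie in $\Q(\zeta_d)$. It kills $\varphi(\bg^{\bc}_{\bp_{i'}};\bz)$ for $i'\neq i$ and multiplies $\varphi(\bg^{\bc}_{\bp_i};\bz)$ by a nonzero scalar $\beta_i\in\Q(\zeta_d)$. Hence
\[
\widehat Q_i\delta_{\bz}\in H^B_{n-1}(U_{\bz},\Q)\ot\Q(\zeta_d)
\]
maps to $\beta_i\frac{(-1)^r}{(r-1)!}(2\pi\ii)^n\varphi(\bg^{\bc}_{\bp_i};\bz)$. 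Dividing by the nonzero element $\beta_i(-1)^r/(r-1)!$ of $\Q(\zeta_d)$ puts $(2\pi\ii)^n\varphi(\bg^{\bc}_{\bp_i};\bz)$ in the image.

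The main obstacle is that $\widehat Q_i$ really separates the elements of $P_{\bc}$, i.e. that distinct $\bp\in P_{\bc}$ give distinct monodromy characters $(\alpha_{\bp,j})_j$. The character depends only on $\bar\bc+h(\bp)\in(\Z/d\Z)^n$, and the representatives $\bq_k$ are distinct modulo $\mathrm{Ker}(h)$, so the characters are distinct. Because $\bar\bc+h(\bp)$ has no zero entry for $\bp\in P_{\bc}$, the series themselves are nonzero and are honest Gamma series even for nongeneric $\bc$; this gives the independence needed above.
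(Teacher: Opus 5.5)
Your overall route is the paper's own: the paper only remarks that the proof of Lemma~\ref{lem:image of the integration map} already yields the theorem, and your two inclusions plus the independence argument make that explicit over $\Q(\zeta_d)$. One step fails as written, however: the operator $\widehat Q_i=\prod_j(Q_j-\alpha_{\bp_i,j})$. You take the convention that $Q_j$ acts on $\varphi(\bg^{\bc}_{\bp};\bz)$ by $\alpha_{\bp,j}$. Then every factor $Q_j-\alpha_{\bp_i,j}$ annihilates $\varphi(\bg^{\bc}_{\bp_i};\bz)$, so $\widehat Q_i$ kills exactly the series you want to isolate. On $\varphi(\bg^{\bc}_{\bp_{i'}};\bz)$ it acts by $\prod_j(\alpha_{\bp_{i'},j}-\alpha_{\bp_i,j})$, which is nonzero whenever the two characters differ in every coordinate. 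This formula is a slip carried over from the paper's proof of Lemma~\ref{lem:image of the integration map}. You also cite that lemma's conclusion for your first inclusion, so the same repair is needed there.

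The fix uses exactly the distinctness of characters you establish in your last paragraph. For each $i'\neq i$, choose an index $j(i')$ with $\alpha_{\bp_{i'},j(i')}\neq\alpha_{\bp_i,j(i')}$, and put $\widehat Q_i:=\prod_{i'\neq i}\bigl(Q_{j(i')}-\alpha_{\bp_{i'},j(i')}\bigr)$. Alternatively, use the character idempotent $d^{-n}\sum_{0\le k_1,\dots,k_n<d}\prod_{j=1}^n(\alpha_{\bp_i,j}^{-1}Q_j)^{k_j}$. Because all $\alpha_{\bp,j}$ lie in $\mu_d$, it acts on $\varphi(\bg^{\bc}_{\bp};\bz)$ by $1$ if $\bp=\bp_i$ and by $0$ for the other $\bp\in P_{\bc}$. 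Either operator has coefficients in $\Q(\zeta_d)$, kills $\varphi(\bg^{\bc}_{\bp_{i'}};\bz)$ for $i'\neq i$, and multiplies $\varphi(\bg^{\bc}_{\bp_i};\bz)$ by a nonzero $\beta_i\in\Q(\zeta_d)$. With this replacement the rest of your argument goes through unchanged.
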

\begin{theorem}\label{thm:arbitrary triangulation}
Let $T$ be a regular triangulation of $A$.
Then, there exists an algorithm to compute a $\Q$-basis of the image of the map $H_{n-1}^B(U_{\bz},\Q)\to\Sol^{T}(M_A(\bc))$ defined by \eqref{eq:betti}.
\end{theorem}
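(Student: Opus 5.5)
The plan is to reduce to the Fermat triangulation, where Theorem \ref{thm:image of the integral of omega_c} already gives an explicit basis of the image, and then to transport that basis to $T$ by analytic continuation. The second step is done with Theorem \ref{thm:connection formula}, applied along a chain of triangulations joined by flips. Concretely, the secondary fan $\Fan(A)$ is a complete polyhedral fan. Any two maximal cones can therefore be joined by a finite sequence $T(\Fer)=T_0,T_1,\dots,T_k=T$ of regular triangulations in which consecutive ones share a facet, that is, differ by a single bistellar flip across a wall. Such a chain is found algorithmically, for instance by walking along a generic segment from $\bw$ to a point of $C_T$ and recording the walls it crosses; these are computable from the Gale dual and \eqref{eq:bar C sigma gale}. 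Composing the paths supplied by Theorem \ref{thm:connection formula} gives a path $\gamma$ from $U_{T(\Fer)}$ to $U_T$. Analytic continuation along $\gamma$ commutes with the map \eqref{eq:betti}, since it is induced by transporting cycles $\delta$ in the topological fibration. Hence the image for $T$ is $\rho_\gamma$ applied to the image for $T(\Fer)$.

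For very generic $\bc$ the computation is direct. Theorem \ref{thm:connection formula} gives $\rho_{\gamma}$ as an explicitly computable matrix in the bases $\Phi_{T_j}$. By Lemma \ref{lem:lem-bar a}, the Fermat basis $\{(2\pi\ii)^n\varphi(\bg^{\bc}_{\bp};\bz)\}_{\bp\in P_{\bc}}$ is expressed in $\Phi_{T(\Fer)}$. Its image is then written in $\Phi_T$. In our situation, however, $\bc\in\Z^n_{>0}$ is resonant, so we instead work with $\bc(\varepsilon)=\bc+\varepsilon\bc'$. The Fermat basis is itself a limit of the form \eqref{eq:SST-series solution}, namely a rescaling $\varepsilon^{k_i}\varphi(\bg^{\bc(\varepsilon)}_{\bq_i};\bz)$. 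For the representatives $\bp\in P_{\bc}$ no entry of $\bg^{\bc}_{\bp}$ on $\sigma$ is an integer, so one may take $k_i=0$. Consequently each Fermat period $\int_{\delta}\omega_{\bc}$ equals $\lim_{\varepsilon\to0}\int_\delta\omega_{\bc(\varepsilon)}$, where the latter is formally the same Gamma series. Continuity of $\rho_\gamma$ in $\varepsilon$ then gives
\[
\rho_\gamma\Big(\lim_{\varepsilon\to0}\sum_i C_i(\varepsilon)\varphi_i\Big)=\lim_{\varepsilon\to0}\sum_i C_i(\varepsilon)\rho_\gamma^{(\varepsilon)}(\varphi_i),
\]
with $\rho^{(\varepsilon)}_\gamma$ the explicit connection matrix for $\bc(\varepsilon)$. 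Expanding the entries of that matrix (products of $e^{\pi\ii\gamma}$ and $\pi/\sin\pi\gamma$) and the coefficients of the $\Phi_T$-series in Laurent series in $\varepsilon$, as in the proof of Theorem \ref{thm:coefficients}, produces the resulting element of $\Sol^T(M_A(\bc))$ in the log-series form \eqref{eq:log series}. This is a finite computation for each coefficient, by Lemma \ref{prop:finite truncation}.

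Finally, to obtain a $\Q$-basis rather than a $\Q(\zeta_d)$-basis, we use Lemma \ref{lem:image of the integration map}. The image of $H^B_{n-1}(U_\bz,\Q)$ is the $\Q[\RNAfK,G]$-span of the single explicit element $\int_{\delta_\bz}\omega_{\bc}$ given by \eqref{eq:key-lem}. We apply the finitely many group elements, namely monomials in the $Q_j$ and elements of $G$ up to the finite order of the action, which act on the $\varphi(\bg^{\bc}_{\bp};\bz)$ by explicit roots of unity. We then continue the results by $\rho_\gamma$ and extract a $\Q$-linearly independent subset by linear algebra over $\Q(\zeta_d)$ regarded as a $\Q$-vector space.

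The main obstacle is the middle step. One must justify that the limit $\varepsilon\to0$ commutes with the continuation, and that the continued family $\rho^{(\varepsilon)}_\gamma(\int\omega_{\bc(\varepsilon)})$ has a limit expressible through the SST-limit in $\Sol^T$. Poles of $\pi/\sin\pi\gamma$ at resonant values must cancel in the limit. I would handle this using Lemma \ref{prop:connection formula}, which guarantees that $\rho_\gamma$ carries SST-limits to SST-limits, so that the cancellations are automatic and only the bookkeeping of the Laurent expansions remains.
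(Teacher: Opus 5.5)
Your proposal is correct and follows the paper's own proof. The Fermat case comes from Lemmas \ref{lem:key-lem} and \ref{lem:image of the integration map}, and a chain $T(\Fer)=T_1,\dots,T_\ell=T$ of regular triangulations sharing facets is traversed using the connection formula at each step. Your handling of the resonant integral parameter through $\bc(\varepsilon)$ and Lemma \ref{prop:connection formula}, and your extraction of a $\Q$-basis from the finite $\langle \pi_1^{\rm loc}(\mathrm{Fer}),G\rangle$-orbit, spell out details the paper's short proof leaves implicit.
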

\begin{proof}
For $T=T(\Fer)$, it follows from Lemmas 
\ref{lem:key-lem} and \ref{lem:image of the integration map}.
For an arbitrary $T$,
there is a sequence of regular triangulations $T_1=T(\Fer),T_2,\ldots,T_\ell=T$ such that
$T_i$ and $T_{i+1}$ share a facet.
Then, for each $i$, we apply the connection formula (Theorem \ref{thm:connection formula})
for $T_i$ and $T_{i+1}$.
\end{proof}

\section{Limiting Periods of Degeneration of Hypersurfaces}
\label{sect:limitMHS}

\subsection{Review of limiting mixed Hodge structures}\label{sect:ReviewLimitMHS}
Following \cite[\S 11 and 14]{peters2008mixed}, we recall the 
limiting mixed Hodge structures for one-parameter degenerations.

\medskip

Let $X$ be a complex manifold, and 
$\rho:X\to \Delta=\{t\in \C \mid |t|<1\}$ be a projective flat morphism
over the disk which is smooth over $\Delta^*=\Delta\setminus\{0\}$.
Put $X^*=\rho^{-1}(\Delta^*)$.
We may or may not assume that the reduced part of the central
fiber is a simple normal crossing divisor in $X$, because it does not matter
for the construction of the limiting MHS.
Let $i\geq0$ be an integer, and put
\[
H_\dR:=R^i\rho_*\Omega^\bullet_{X^*/\Delta^*},\quad H_B:=R^i\rho_*\Q.
\]
Here, $H_\dR$ is endowed with the Hodge filtration 
$F^\bullet$ and
the action of $\partial_t:=d/dt$
by the Gauss-Manin connection.
Furthermore, $H_B$ is a local system on $\Delta^*$, so the cohomology  $H^i_B(X_{t_0},\Q)$ of $X_{t_0}=f^{-1}(t_0)$
is endwoed with the monodormy $T$ 
where we fix a point $t_0\in \Delta^*$.
Let $V_\bullet H_\dR$ be the $V$-filtration by Kashiwara-Malgrange.
This is the unique increasing filtration indexed by $\Q$
which is characterized by the following properties:
\begin{align*}
&tV_\alpha H_\dR\subset V_{\alpha-1}H_\dR;\\
&\partial_t V_\alpha H_\dR\subset V_{\alpha+1}H_\dR;\,\text{and}\\
&t\partial_t+\alpha+1\text{ is nilpotent on }\mathrm{Gr}^V_\alpha H_\dR,
\end{align*}
where $\mathrm{Gr}^V_\alpha H_\dR:=V_\alpha H_\dR/V_{<\alpha} H_\dR$.
Note that $V_{-1}H_\dR/V_{-2}H_\dR$ is exactly Deligne's canonical extension
of $(H_\dR,\nabla)$ (see \cite[Definition 11.4]{peters2008mixed}).
We put
\[H_{B,\infty}:=H^i_B(X_{t_0},\Q),\quad
H_{\dR,\infty}:=\bigoplus_{-1\leq \alpha<0}\mathrm{Gr}^V_\alpha H_\dR.\]
Note that $H_{\dR,\infty}$ is a sheaf on $\Delta$ supported at the origin.
Therefore, we also regard it as a vector space by identifying it as its stalk at the origin.
We define the weight filtration $W_\bullet H_{B,\infty}$ 
to be the monodromy
filtration by the unipotent part $T_u$ of $T$, namely the unique increasing filtration
characterized by the properties
\[
NW_\bullet H_{B,\infty}\subset 
W_{\bullet-2}H_{B,\infty}, \quad N^k:\mathrm{Gr}^W_{i+k}H_{B,\infty}\os{\cong}{\lra}
\mathrm{Gr}^W_{i-k}H_{B,\infty}
\]
for all $0\leq k\leq i$
where $N:=\log T_u$.
The Hodge filtration
$F^\bullet H_{\dR,\infty}$ is defined to be the image $\bigoplus_\alpha \Image(
F^\bullet 
H_\dR\cap V_\alpha H_\dR\to\mathrm{Gr}^V_\alpha H_\dR)$.
The comparison isomorphism
\begin{equation}\label{eq:comarison isomorphism}
\iota_\infty:H_{\dR,\infty}\os{\cong}{\lra} \C\ot_\Q H_{B,\infty}
=\Hom_\Q(H^B_i(X_{t_0},\Q),\C)
\end{equation}
is given in the following way (
\cite[Theorem 11.16]{peters2008mixed}, 
\cite[(2.6)]{steenbrink1976limits}).
For an element
\[g(t)=g_0(t)+g_1(t)\log t+\cdots+g_k(t)(\log t)^k\in\bigcup_{N\geq1}\C[\![t^{1/N}]\!][t^{-1},\log t],\]
let
$\mathbf{in}_\alpha(g(t))$ denote the coefficient of $t^\alpha$
in $g_0(t)$.
For $\omega\in H_{\dR}$ and $\delta\in H_i^B(X_{t_0},\Q)$,
one has an asymptotic expansion
\begin{equation}\label{eq:asymptotics}
\int_\delta\omega=\sum_{i\geq0} h_i(t)(\log t)^i    
\end{equation}
where $h_i(t)$ are holomorphic functions defined on a 
neighborhood of $t_0$ which have
Puiseux expansions with respect to $t$.
Observe that if $\omega\in V_\alpha H_\dR$, then
it imposes $\ord_t(h_i)\geq-\alpha-1$.
To an arbitrary element $\omega\in V_\alpha H_\dR$, 
we associate the homomorphism 
\begin{equation}\label{eq:iota infty}
 H_i^B(X_{t_0},\Q)\lra \C,\quad \delta\longmapsto \mathbf{in}_{-\alpha-1}\left(\int_\delta\omega\right),
\end{equation}
and this induces a linear map $\iota_{\infty,\alpha}:\mathrm{Gr}^V_\alpha H_\dR\to \C\ot_\Q H_{B,\infty}$.
We then define the comparison isomorphism \eqref{eq:comarison isomorphism}
by $\iota_\infty:=\bigoplus_{-1\leq \alpha<0}\iota_{\infty,\alpha}$.
The data
\begin{equation}\label{eq:nearby.cycle}
H^i_\infty(X/\Delta):=(H_{\dR,\infty},H_{B,\infty},F^\bullet H_{\dR,\infty},W_\bullet H_{B,\infty},\iota_\infty)
\end{equation}
forms a mixed Hodge structure, which we call the {\it limiting MHS}
for $R^if_*\Q$ (cf. \cite[Theorem 11.22]{peters2008mixed}).
By the construction, it is not affected by the central fiber,
namely depends only on $R^if_*\Q|_{\Delta^*}$.
On the other hand, it depends on the choice of the parameter $t$.

\medskip

Let $K\subset \C$ be a subfield. Suppose that $\rho:X\to\Delta$ has a
descent to $\Spec K[\![t]\!]$, which means that
there is a projective flat morphism $\rho_K:X_K\to \Spec K[\![t]\!]$
such that $\Spec\C[\![t]\!]\times_{\Spec K[\![t]\!]}X_K\cong \Spec\C[\![t]\!]
\times_\Delta X$.
Then $H_\dR$ is naturally endowed with a $K$-structure $H_{\dR,K}=R^i\rho_{K*}\Omega^\bullet_{X_K/K(\!(t)\!)}$
and the $V$-filtration is defined on $H_{\dR,K}$.
Let us put 
\[
H_{\dR,\infty,K}:=\bigoplus_{-1\leq \alpha<0}\mathrm{Gr}^V_\alpha H_{\dR,K}.
\]
Note that a relation $\C\ot_KH_{\dR,\infty,K}= H_{\dR,\infty}$ holds true.
We put
\begin{equation}\label{eq:nearby.cycle.K}
H^i_\infty(X/\Delta)_K:=(H_{\dR,\infty,K},H_{B,\infty},F^\bullet H_{\dR,\infty},W_\bullet H_{B,\infty},\iota_{\infty,K}).    
\end{equation}
One has the matrix representation of the comparison
\begin{equation}\label{eq:nearby.cycle.comparison}
\iota_{\infty,K}:\C\ot_K H_{\dR,\infty.K}\os{\cong}{\lra} \C\ot_\Q H_{B,\infty}
\end{equation}
with respect to a $K$-basis of $H_{\dR,\infty,K}$ and a $\Q$-basis of
$H_{B,\infty}$ as before.
We call the entries of the representation matrix of $\iota_{\infty,K}$ the
{\it limiting periods}.


\begin{remark}\label{rem:4.0}
When
$X/\Delta$ is a semistable reduction,
the limiting MHS
has an alternative construction by Steenbrink
\cite{steenbrink1976limits}, see also \cite[\S 11.2]{peters2008mixed}.
\end{remark}
\begin{remark}\label{rem:4.1}
The limiting MHS is stable under the base change
(cf. \cite[189, Proposition]{sabbah2022degenerating}).
To see this,  
let $\widetilde\Delta$ denote another disk with coordinate function $s$,
and let $i_n:\widetilde\Delta\to\Delta$ be the map given by 
$i_n^*(t)=s^n$ for an integer $n\geq1$.
Take $n$ such that
the base change $\widetilde\rho:\widetilde{X}\to 
\widetilde\Delta$ of $X/\Delta$
has a semistable reduction at $s=0$.
Let $\widetilde{H}_{\dR,K}$ be the de Rham cohomology of 
$\widetilde{X}/\widetilde\Delta$, and
let
\[
H^i_\infty(\widetilde{X}/\widetilde\Delta)_K=(\widetilde{H}_{\dR,\infty,K},H_{B,\infty},
F^\bullet \widetilde{H}_{\dR,\infty},W_\bullet H_{B,\infty},\widetilde\iota_{\infty}) 
\]
be the limiting MHS,
where the Betti cohomology $H_{B,\infty}$ together with the weight filtration $W_\bullet$
is identical with
the one in \eqref{eq:nearby.cycle.K}.
Then the map
\[
V_\alpha H_{\dR,K}\lra V_{-1}\widetilde{H}_{\dR,K},\quad \omega\longmapsto s^{n\alpha+n}\,i^*\omega
\]
induces a bijection 
\[
\widetilde{u}:
\bigoplus_{-1\leq \alpha<0}\mathrm{Gr}^V_\alpha H_{\dR,K}
\overset{\cong}{\lra}
\mathrm{Gr}^V_{-1}\widetilde{H}_{\dR,K},
\]
and this preserves the Hodge filtration 
and satisfies $\widetilde\iota_\infty\circ 
\widetilde{u}=\iota_\infty$.
Therefore, we have an isomorphism
$H^i_\infty(X/\Delta)_K\cong
H^i_\infty(\widetilde{X}/\widetilde\Delta)_K$ 
of mixed Hodge structures.
\end{remark}
\subsection{Main Theorem}\label{sect:Main Theorem}
Let $\P^{n-1}$ be the projective space 
with the coordinates $(x_1,\ldots,x_n)$, let $m>n$ be an integer, and let
\begin{equation}\label{F}
F=\sum_{i=1}^mz_i \bx^{\ba_i}
\end{equation}
be a homogeneous polynomial of degree $d$ where $z_1,\dots,z_m$ are deformation parameters.
Let $M=\Z^m$, $M'=\Z^n$ and 
we regard $A=\begin{bmatrix}\ba_1&\cdots&\ba_m \end{bmatrix}$ 
as a homomorphism $A:M\to M'$.

Let $\O_t^{an}$ denote the ring of holomorphic functions at $t=0$,
and let $\O_t^{an}[t^{-1}]$ denote the ring of meromorphic functions.
For $f=(f_1,\ldots,f_m)\in (\O^{an}_t[t^{-1}]\setminus\{0\})^m$, let $X_f\to \Delta$ be a projective flat family of 
hypersurfaces defined by an equation
\begin{equation}\label{F_f}
F_f=\sum_{i=1}^{m} f_i(t)\bx^{\ba_i}.
\end{equation}
Let $K\subset\C$ be a subfield.
We assume that both of the following conditions hold.
\begin{align}
&\text{$X_f$ is smooth over the punctured disk
$\Delta^*=\{t\in\C\mid0<|t|<1\}$;}\label{eq:condition (i)}\\
&\text{$f_i(t)\in K(\!(t)\!):=K[\![t]\!][t^{-1}]$ for all $i$.}\label{eq:condition (ii)}
\end{align}
By taking a suitable blowing up, we can 
assume that $X_f$ is a complex manifold, though it does not affect
the limiting MHS.
By \eqref{eq:condition (ii)}, $X_f$ has a descent to $\Spec K[\![t]\!]$ as in \S \ref{sect:ReviewLimitMHS}, 
so that one has the limiting MHS $H^{n-2}_\infty(X_f/\Delta)_K$ as in \eqref{eq:nearby.cycle.K}.
Let $\be_1,\ldots,\be_m$ be the standard basis of $M_\R$ and
$\be_1^*,\ldots,\be_m^*$ the dual basis of $M^*_\R$.
Let us put
\begin{equation}\label{eq:w-vec}
    \bw_f:=\ord_t(f_1)\be_1^*+\cdots+\ord_t(f_m)\be^*_m\in M_\R^*=(\R^m)^*,
\end{equation}
where $\ord_t(f_i)$ denotes the vanishing order of $f_i(t)$ at $t=0$.
Let $\bu_1,\ldots,\bu_{m-n}$ be a $\Z$-basis of $L_A$, let $\bu^*_1,\ldots,\bu^*_{m-n}$ be its dual basis and let $\pi_A:M_\R^*\to (L_{A,\R})^*$ be the morphism induced from the inclusion $L_A\hookrightarrow \Z^m$.
Then, one has an identity
\begin{equation}\label{eq:weight vector}
\pi_A(\bw_f)=\ord_t f^*(\bz^{\bu_1})\bu_1^*+\cdots+\ord_t f^*(\bz^{\bu_{m-n}})\bu_{m-n}^*
\in (L_{A,\R})^*.
\end{equation}
Here, we set $f^*(\bz^{\bu})=f_1^{u_1}\cdots f_m^{u_m}$.
It also follows that the right-hand side of \eqref{eq:weight vector} does not depend on the choice of the basis $\{\bu_i\}_{i=1}^{m-n}$.

\begin{definition}
Let $\be'_1,\ldots,\be'_n$ be the standard basis of $M'_\R$, and
let $N_A$ be the the smallest positive integer
satisfying that $N_A[\ba'_{1}\cdots\ba'_{n}]^{-1}$ is an integer matrix
for any vectors $\ba'_1,\ldots,\ba'_n\in\{\ba_1,\ldots,\ba_m,d\be'_1,\ldots,d\be'_n\}$
such that $\det[\ba'_{1}\cdots\ba'_{n}]\ne0$.
\end{definition}

Now, we are ready to state our main result of this paper.
\begin{theorem}\label{thm:main}
For a Laurent series $f=a_{p}t^{p}+a_{p+1}t^{p+1}+\cdots
\in K((t))$ with $a_p\neq 0$, let
$\mathbf{in}(f):=a_{p}$ denote the initial coefficient.
If $\bw_f\not\in\mathrm{Sk}(\Fan(A))$,
then the limiting periods of $H^{n-2}_{\infty}(X_f/\Delta)$ lie in 
the $K$-algebra generated by
\[
\PG(N_A),\quad
(2\pi\ii)^{\pm1},\quad e^{\frac{\pi \ii}{N_A}},\quad
({\mathbf{in}}\,f_i)^{\frac{1}{N_A}},\,\log (\mathbf{in}\, f_i),\quad(i=1,2,\ldots,m)
\]
where $\PG(N_A):=\PG(N_A,0)$ is the $\Q$-algebra introduced in Definition \ref{defn:psi-algebra}.
\end{theorem}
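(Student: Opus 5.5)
Our plan is to reduce to the Fermat deformation and then pull back SST-limit solutions along $f$. First, we embed the family $X_f$ into a Fermat deformation. Consider $F'=\sum_{i=1}^m z_i\bx^{\ba_i}+z_{m+1}x_1^d+\cdots+z_{m+n}x_n^d$ with matrix $A'=[\ba_1\cdots\ba_m\, d\be'_1\cdots d\be'_n]$. We set $f_{m+j}(t):=t^{w_{m+j}}$ with large integers $w_{m+j}$, chosen by Lemma \ref{lem:compare.skeleton} so that $\bw'=(\bw_f,w_{m+1},\dots,w_{m+n})$ lies in the interior of a maximal cone of $\Fan(A')$, i.e.\ $S(\bw')=:T$ is a regular triangulation of $A'$. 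Since the limiting MHS depends only on the restriction to $\Delta^*$ and on the behaviour of periods near $t=0$, we would justify replacing $f$ by $f'$. To do this, we compare the two families via a further one-parameter deformation in which $t^{w_{m+j}}$ is multiplied by a parameter $s$ and $s\to0$. Alternatively, we could simply note that for $w_{m+j}\gg0$ the added terms do not change the leading asymptotics and the local system is unchanged. This comparison is the point needing care, and we would try the second argument first. By Remark \ref{rem:4.1} we may also base change $t\mapsto t^N$, so that all exponents become integral.

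Next, we identify the limiting periods with initial coefficients of pulled-back solutions. By \eqref{eq:residule map} and Lemma \ref{lem:Fermat-dR-lemma}, $H_{\dR}$ of the family is generated over $K(\!(t)\!)$ by the classes $f^*\omega_{\bc}$ with $\bc\in I$, together with their $\partial_t$-derivatives. The $V$-filtration pieces are spanned by $K(\!(t)\!)$-combinations of these classes. By \eqref{eq:iota infty}, each limiting period is then a $K$-linear combination of coefficients $\mathbf{in}_{k}$ of $\int_\delta f^*\omega_{\bc}$ (and their derivatives), where $\delta$ runs over a $\Q$-basis of $H^B_{n-2}$. Consequently, it suffices to show that for every such $\delta$ and $\bc$, all coefficients in the expansion \eqref{eq:asymptotics} of $f^*\int_\delta\omega_{\bc}$ lie in the stated algebra.

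To control these expansions, we use Theorem \ref{thm:image of the integral of omega_c}: the image of the $\Q$-lattice in $\Sol^{T(\Fer)}(M_{A'}(\bc))$ is spanned over $\Q(\zeta_d)$ by $(2\pi\ii)^n\varphi(\bg^{\bc}_{\bp};\bz)$, a Gamma series with coefficients in $\PG(d,\cdot)$. We then walk from $T(\Fer)$ to $T$ through triangulations sharing facets and apply Lemma \ref{prop:connection formula} at each step. The ring of Theorem \ref{thm:connection formula} is generated by $e^{\pi\ii/M}$, by $e^{\pm\pi\ii\gamma}$ with $\gamma\in\frac1{N_A}\Z$ (the exponents have denominators dividing $N_A$ by the definition of $N_A$), and by $(\pi/\sin\pi\gamma)^{\pm1}=(\Gamma(\gamma)\Gamma(1-\gamma))^{\pm1}$. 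All of these lie in $\PG(N_A)[e^{\pi\ii/N_A},(2\pi\ii)^{\pm1}]$; here we must check that the degree constraint $\sum m_jq_j\equiv e$ is respected, which it is, since $\Gamma(\gamma)\Gamma(1-\gamma)$ has total argument $1$. Theorem \ref{thm:coefficients} then shows that the Betti image in $\Sol^T(M_{A'}(\bc))$ consists of log-polynomials in $\log z_i$ whose series coefficients lie in this ring. The shifts $-\bv\cdot\bc$ occurring in the theorem are integral and are absorbed after multiplying by the $(2\pi\ii)^n$ factor.

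Finally, we substitute $z_i=f_i(t)$. Since $\bw'$ is interior to $C_T$, each monomial $\bz^{\bu+\bg}$ pulls back to $t^{\bw'\cdot(\bu+\bg)}\prod_i(\mathbf{in} f_i)^{u_i+\gamma_i}(1+O(t))$, where the correction factors have coefficients in $K$ and the exponents $\gamma_i$ lie in $\frac1{N_A}\Z$. Likewise, $\log f_i=\ord(f_i)\log t+\log\mathbf{in}f_i+(\text{series over }K)$. Lemma \ref{prop:finite truncation} guarantees that each coefficient of $t^k(\log t)^j$ is a finite sum, so it lies in the $K$-algebra generated by the ring above, $(\mathbf{in}f_i)^{1/N_A}$ and $\log\mathbf{in}f_i$. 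We expect the main obstacle to be bookkeeping the convergence on the path: the point $f(t)$ for small $t$ must lie in ${\rm Log}^{-1}(v_T+C_T)\subset U_T$, which follows from $\bw'\in\mathrm{int}\,C_T$ and \eqref{eq:U_T is non-empty}, together with the comparison step of the first paragraph.
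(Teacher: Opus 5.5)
Your route is the paper's. Your Step 2 matches the paper's Step 2: the $\Q(\zeta_d)$-basis $(2\pi\ii)^n\varphi(\bg^{\bc}_{\bp};\bz)$ at $T(\Fer)$ from Theorem \ref{thm:image of the integral of omega_c}, a chain of facet-sharing triangulations handled by Lemma \ref{prop:connection formula}, Theorem \ref{thm:coefficients} on $U_T$, and the pull-back along $f$ controlled by $\bw'\in\mathrm{int}\,C_T$ and Lemma \ref{prop:finite truncation}. Your augmentation by $t^{w_{m+j}}x_j^d$ with $w_{m+j}\gg0$ via Lemma \ref{lem:compare.skeleton} matches its Step 1 (the paper takes all $w_{m+j}=N$).

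The gap is that you never prove that $X_f$ and the augmented family $X_{f'}$ have the same limiting periods. The justification you lean on (the local system is unchanged and the added terms do not affect the leading asymptotics) covers only the Betti side and the agreement of period expansions. The de Rham side $H_{\dR,\infty,K}=\bigoplus_{-1\le\alpha<0}\mathrm{Gr}^V_\alpha H_{\dR,K}$ is built from the Kashiwara--Malgrange filtration of each family's own Gauss--Manin system, and these are different $D$-modules over $K(\!(t)\!)$. Until you exhibit a $K$-linear isomorphism of the graded pieces intertwining the two comparison maps, you cannot transport the $K$-structure of $X_f$, with respect to which its limiting periods are defined, to the family $X_{f'}$ whose periods you actually compute. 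Your first suggested route (an auxiliary factor $s\to0$) does not avoid this. Nothing makes the limiting MHS constant in $s$ across $s=0$, where the coefficients of $x_j^d$ vanish identically, without the same estimate.

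The paper fills this gap as follows:
\begin{enumerate}
\item Both Gauss--Manin modules have bases $\{\omega_{F_f,\bc}\}_{\bc\in J}$ and $\{\omega_{F_f^{(N)},\bc}\}_{\bc\in J}$ over a common index set.
\item Differentiating $\bx^{\bc}F^{-r}$ explicitly shows that the map $\tilde u:\omega_{F_f,\bc}\mapsto\omega_{F^{(N)}_f,\bc}$ satisfies $(\tilde u\partial_t-\partial_t\tilde u)(H)\subset t^{N-l}H^{(N)}$ on the $K[\![t]\!]$-lattices.
\item For $N\gg0$ this makes $\tilde u\colon t^{-k}H\to H^{(N)}_{\dR}/V_{<-1}H^{(N)}_{\dR}$ compatible with $t\partial_t$. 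The defining properties of the $V$-filtration then give $\tilde u(V_\alpha)\subset V_\alpha$ for $-1\le\alpha<0$.
\item Finally, $\int_\delta\omega_{F_f,\bc}-\int_\delta\omega_{F^{(N)}_f,\bc}\in t^p\C[\![t]\!]$ with $p\to\infty$ as $N\to\infty$ gives $\iota_\infty\circ u=\iota_\infty$.
\end{enumerate}

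Your ``leading asymptotics'' idea can also be made rigorous, but it needs an input you did not state. For a regular singular connection, $\omega\in V_\alpha$ if and only if every period of $\omega$ involves only exponents $\ge-\alpha-1$. Combine this with the congruence of periods modulo $t^p$ for $p$ beyond the fixed pole order $k$ of $V_0H_{\dR}$ in $H$, and with equality of ranks; together these identify the $\mathrm{Gr}^V_\alpha$ compatibly with $\iota_\infty$.

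Smaller points:
\begin{enumerate}
\item You drop the non-primitive class; its periods are powers of $2\pi\ii$, as the paper notes.
\item The base change of Remark \ref{rem:4.1} is unnecessary.
\item The remark about absorbing $-\bv\cdot\bc$ into $(2\pi\ii)^n$ is spurious: $\bv\cdot\bc=r\in\Z$ already gives $\PG(N_A,-r)=\PG(N_A)$.
\item You list $e^{\pi\ii/M}$ among the generators from Theorem \ref{thm:connection formula} without checking that it lies in $\Q(e^{\pi\ii/N_A})$. The paper makes the same identification implicitly when it defines $R_A$.
\end{enumerate}
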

We note that the condition 
$\bw_f\not\in\mathrm{Sk}(\Fan(A))$ is readily checked 
thanks to Proposition \ref{prop:skeleton}
together with \eqref{eq:weight vector}.

The special values of polygamma functions are close to those of
logarithmoc functions or
Dirichlet's $L$-values thanks to Lemma \ref{lem:main} below.
Therefore, our main theorem asserts that the limiting periods 
of a one-parameter degeneration of projective hypersurfaces
are generated by log, Gamma and Drichlet $L$-values.
\begin{lemma}\label{lem:main}
Let $0<p<q$ be coprime integers. Then
\[
\psi^{(0)}(p/q)-\psi^{(0)}(1)=-\log q-\frac{\pi}2\frac{\cos(\pi p/q)}{\sin(\pi p/q)}
+\frac12\sum_{j=1}^{q-1}\cos\left(\frac{2\pi jp}q\right)\log\left(2-2\cos\frac{2\pi j}q\right).
\]
and, for $i\geq1$, we have
\[
\psi^{(i)}(p/q)=(-1)^{i-1}\frac{i!q^{i+1}}{\varphi(q)}
\sum_{\chi}\bar\chi(p)L(i+1,\chi)
\]
where $\varphi(q):=\sharp(\Z/q\Z)^\times$, and 
$\chi$ runs over the set of primitive characters of conductor $q$.
\end{lemma}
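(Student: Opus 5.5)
The plan is to reduce both identities to series over an arithmetic progression, $m\equiv p \bmod q$, and then detect the congruence condition by finite Fourier analysis on $\Z/q\Z$. Additive characters will handle the digamma value and multiplicative (Dirichlet) characters the higher polygamma values. The starting point is the series representations
\[
\psi^{(0)}(x)-\psi^{(0)}(1)=\sum_{k\geq0}\left(\frac{1}{k+1}-\frac{1}{k+x}\right),\qquad
\psi^{(i)}(x)=(-1)^{i+1}\,i!\sum_{k\geq0}\frac{1}{(k+x)^{i+1}}\quad(i\geq1),
\]
which we take from \cite[Chap.5]{olver2010nist}. At $x=p/q$ we have $\frac{1}{k+p/q}=\frac{q}{qk+p}$. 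Hence both sums run over the positive integers $m=qk+p$, that is, over $m\equiv p\bmod q$.

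\textbf{Polygamma values, $i\geq1$.} Since $\gcd(p,q)=1$, orthogonality of characters gives
\[
\mathbf 1_{m\equiv p\ (q)}=\frac{1}{\varphi(q)}\sum_{\chi\bmod q}\bar\chi(p)\chi(m).
\]
Substituting this into the absolutely convergent series yields
\[
\psi^{(i)}(p/q)=(-1)^{i-1}\frac{i!\,q^{i+1}}{\varphi(q)}\sum_{\chi\bmod q}\bar\chi(p)L(i+1,\chi),
\]
where we used $(-1)^{i+1}=(-1)^{i-1}$. Here $\chi$ runs over \emph{all} characters modulo $q$. I will check carefully whether the restriction to primitive characters in the statement is meant literally. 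For instance, for $q=6$ there are no primitive characters of conductor $6$, yet $\psi^{(1)}(1/6)\neq0$, so the literal reading cannot be right. I would therefore prove the version over all characters mod $q$. I would then add the remark that an imprimitive $\chi$ induced from a primitive $\chi^*$ of conductor $f\mid q$ satisfies
\[
L(s,\chi)=L(s,\chi^*)\prod_{\ell\mid q}\bigl(1-\chi^*(\ell)\ell^{-s}\bigr).
\]
So every such value is a rational combination of $L(i+1,\chi^*)$ with conductor dividing $q$. This is exactly what Theorem~\ref{thm:intro} requires.

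\textbf{Digamma value.} Here the series only converges conditionally, so I will work with the partial sum up to $k=N-1$. Put $\zeta=e^{2\pi\ii/q}$ and write
\[
\mathbf 1_{m\equiv p\ (q)}=\frac1q\sum_{j=0}^{q-1}\zeta^{j(m-p)}.
\]
Then the partial sum equals
\[
\sum_{k=1}^{N}\frac1k-\sum_{m=1}^{qN}\frac1m-\sum_{j=1}^{q-1}\zeta^{-jp}\sum_{m=1}^{qN}\frac{\zeta^{jm}}{m}+o(1).
\]
The $j=0$ part tends to $-\log q$. For $j\neq0$, the inner sums converge to $-\log(1-\zeta^j)$, with the principal branch, by Abel's theorem applied to the logarithmic series on the unit circle away from $1$. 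Hence
\[
\psi^{(0)}(p/q)-\psi^{(0)}(1)=-\log q+\sum_{j=1}^{q-1}\zeta^{-jp}\log(1-\zeta^j).
\]
Next, split $\log(1-\zeta^j)$ into its real and imaginary parts. The real part is $\frac12\log|1-\zeta^j|^2=\frac12\log\bigl(2-2\cos\frac{2\pi j}{q}\bigr)$. The imaginary part is $\arg(1-\zeta^j)=\pi\bigl(\frac jq-\frac12\bigr)$ for $0<j<q$. The total must be real, so only the cosine parts survive against the real part, which gives the stated cosine sum. Against the imaginary part only the sine parts survive, giving
\[
\sum_{j=1}^{q-1}\sin\!\Bigl(\frac{2\pi jp}{q}\Bigr)\,\pi\Bigl(\frac jq-\frac12\Bigr)=\frac{\pi}{q}\sum_{j=1}^{q-1}j\sin\!\Bigl(\frac{2\pi jp}{q}\Bigr).
\]
Here the $-\frac12$ term vanishes because $\sum_{j=1}^{q-1}\sin(2\pi jp/q)=0$. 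Finally, the elementary identity
\[
\sum_{j=1}^{q-1}j\,\omega^{j}=\frac{q}{\omega-1}\qquad(\omega=\zeta^{-p}\neq1)
\]
gives $\sum_{j=1}^{q-1} j\sin(2\pi jp/q)=-\frac q2\cot(\pi p/q)$, and this produces the term $-\frac\pi2\cot(\pi p/q)$.

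The main obstacle is the bookkeeping in the digamma computation: justifying the interchange with partial sums $qN$, and fixing the branch and sign conventions for $\arg(1-\zeta^j)$ and the cotangent sum. I would double-check the final formula against Gauss's digamma theorem at $p/q=1/2$, where both sides equal $-2\log2$. I would also cross-check using $\Gamma(p/q)\Gamma(1-p/q)$-reflection, which is consistent with the cotangent term being odd under $p\mapsto q-p$ while the cosine sum is even.
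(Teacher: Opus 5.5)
Your proof is correct, and your objection to the statement is justified. For every $q\ge 2$ the principal character modulo $q$ has conductor $1$, so a sum restricted to primitive characters of conductor $q$ always omits it. For $q=2$ the right-hand side is even empty, while $\psi^{(1)}(1/2)=\pi^2/2$. The paper's own argument writes $L(i+1,\chi)=\frac{(-1)^{i-1}}{i!\,q^{i+1}}\sum_{0<k<q}\chi(k)\psi^{(i)}(k/q)$ and then inverts by orthogonality, and that inversion also needs the full group of characters modulo $q$. So the version you prove is what is actually established. Your Euler-factor remark is exactly what links it to the ``conductors dividing $N_A$'' in Theorem~\ref{thm:intro}. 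One small correction: the factors $\prod_{\ell\mid q}(1-\chi^*(\ell)\ell^{-s})$, like the weights $\bar\chi(p)$, are cyclotomic rather than rational. This is harmless, because the algebra in Theorem~\ref{thm:intro} is over $\overline{K}$.

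For $i\ge 1$ your route is the paper's: the series for $\psi^{(i)}$ plus orthogonality, applied in the forward rather than the inverse direction. For the digamma identity the paper simply cites Gauss's digamma theorem \cite[5.4.19]{olver2010nist}, whereas you derive it. Additive characters detect $m\equiv p \bmod q$ in the partial sums, and Abel's theorem gives $-\log q+\sum_{j=1}^{q-1}\zeta^{-jp}\log(1-\zeta^j)$. Taking real parts with $\arg(1-\zeta^j)=\pi(\frac{j}{q}-\frac12)$ and $\sum_{j=1}^{q-1}j\omega^j=q/(\omega-1)$ then yields the cotangent and cosine terms. The signs check out, including at $p/q=1/2$ and $p/q=1/3$. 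The citation is shorter. Your derivation is self-contained and shows directly why only $\log q$, logarithms of algebraic numbers, and $\pi$ times algebraic numbers occur, which is all the reduction to Theorem~\ref{thm:intro} uses.
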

\begin{proof}
The former equality is nothing but \cite[5.4.19]{olver2010nist}.
For $i\geq1$, the series expansion (\cite[5.15]{olver2010nist})
\[
\psi^{(i)}(z)=\sum_{n=0}^\infty\frac{(-1)^{i-1}i!}{(z+n)^{i+1}}
\]
implies \[
L(i+1,\chi)
=\sum_{n=1}^\infty\frac{\chi(n)}{n^{i+1}}
=\sum_{0<k<q}\sum_{n=0}^\infty\frac{\chi(k)}{(k+nq)^{i+1}}
=\frac{(-1)^{i-1}}{i!q^{i+1}}\sum_{0<k<q}\chi(k)\psi^{(i)}(k/q).
\]
Then, apply the orthogonal relations of characters.
\end{proof}

\subsection{Proof of Theorem \ref{thm:main}}\label{sect:Proof of the main theorem}
In this section, we prove Theorem \ref{thm:main}.
It is enough to consider the primitive part of the de Rham cohomology group of $X_f/\Delta$
because the period of the differential form of
an algebraic cycle class is $(2\pi\sqrt{-1})^r$
(\cite[p.21-23]{deligne2009hodge}).
Let
\[
H^{n-2}_\infty(X_f/\Delta)_{\mathrm{prim},K}
=(H_{\dR,\infty,K},H_{B,\infty},F^\bullet H_{\dR,\infty},W_\bullet H_{B,\infty},\iota_{\infty,K}) 
\]
denote the limiting MHS of the primitive part.
We abbreviate ``prim" on the right-hand side
because there is no fear of confusion.
Then, we identify the primitive part
$H^{n-2}_B(X_{\bt_0},\Q(-1))_{\mathrm{prim}}$ with $H^{n-1}_B(\P^{n-1}\setminus X_{\bt_0},\Q)$, and $H^{n-2}_\dR(X_f/S)_{\mathrm{prim}}$ with 
$H^{n-1}((\P^{n-1}_S\setminus X_f)/S)$.
\subsubsection{Step 1}\label{sect:step1}
First, we show that it is enough to prove Theorem \ref{thm:main}
when the polynomial
$F_f$ in \eqref{F_f} contains monomials
$x_1^d,\ldots,x_n^d$.
For this purpose, we set
\[
F_{f,s}:=F_f+sG:=F_f+s\sum_{i=1}^{n} x_i^d.
\]
This defines a smooth projective family of hypersurfaces
over $D=\{(t,s)\mid0\leq |s|\ll\varepsilon
<|t|<1\}$, which we denote by $X_{f,s}/D$.
We have the natural isomorphism $H^i_B(X_{f,s},\Q)\cong H^i_B(X_{f,0},\Q)$
that is compatible with the action of the local monodromy at $t=0$.
We put $F^{(N)}_f:=F_{f,t^N}$ and $X^{(N)}_f:=X_{f,t^N}$ for $N\geq1$.
The following symbol denotes the limiting MHS of $X_f^{(N)}$: 
\[
H^{n-2}_\infty(X^{(N)}_f/\Delta)_{\mathrm{prim},K}
=(H^{(N)}_{\dR,\infty,K},H^{(N)}_{B,\infty},F^\bullet H^{(N)}_{\dR,\infty},W_\bullet H^{(N)}_{B,\infty},\iota^{(N)}_{\infty,K}).
\]
We show that there is an isomorphism
\begin{equation}\label{eq:step1.eq1}
u:H_{\dR,\infty,K}\os{\cong}{\lra}H^{(N)}_{\dR,\infty,K}    
\end{equation}
of $K$-vector spaces that satisfies $\iota_\infty\circ u=\iota_\infty$
if $N\gg1$.
We put $U_f:=\P^{n-1}\setminus X_f$ and $U^{(N)}_f:=\P^{n-1}\setminus X^{(N)}_f$.
Then, we obtain direct sum decompositions
\[
H_\dR:=H^{n-1}_\dR(U_f/K(\!(t)\!))=\bigoplus_{\bc\in J}K(\!(t)\!)\omega_{F_f,\bc}
\]
and
\[
H^{(N)}_\dR:=H^{n-1}_\dR(U^{(N)}_f/K(\!(t)\!))=\bigoplus_{\bc\in J}K(\!(t)\!)\omega_{F_f^{(N)},\bc},
\]
where $J\subset(\Z_{\geq0})^n$ is a set of indices.
Put 
$H:=\sum_{\bc\in J}K[\![t]\!]\omega_{F_f,\bc}$ and 
$H^{(N)}:=\sum_{\bc\in J}K[\![t]\!]\omega_{F^{(N)}_f,\bc}$.
Let $\tilde u:H_\dR\to H^{(N)}_\dR$ be the isomorphism given by
$\omega_{F_f,\bc}\mapsto \omega_{F^{(N)}_f,\bc}$.
This is not compatible with the action of $\partial_t$, but satisfies that
\begin{equation}\label{eq:step1.comp.eq1}
(\tilde u\partial_t-\partial_t\tilde u)(H)\subset t^{N-l}H^{(N)}
\end{equation}
for some $l\geq0$ and large enough $N$. To see this, we expand
an element
\[
\partial_t\left(\frac{\bx^{\bc}}{F_f^r}\frac{d\bx}{\bx}\right)=-r\frac{
\partial_tF_f\,\bx^{\bc}}{F_f^{r+1}}\frac{d\bx}{\bx},
\quad
\partial_t\left(\frac{\bx^{\bc}}{(F_f+t^NG)^r}\frac{d\bx}{\bx}\right)=-r\frac{(\partial_tF_f+Nt^{N-1}G)\bx^{\bc}}{(F_f+t^NG)^{r+1}}\frac{d\bx}{\bx}
\]
in the de Rham cohomology
as a linear combination of $\omega_{F_f^{(N)},\bc}$'s.
This shows that 
$(\tilde u\partial_t-\partial_t\tilde u)(\omega_{F_f,\bc})$ lies in
$t^{N-l_{\bc}}H^{(N)}$
for some $l_{\bc}\geq0$.
The inclusion \eqref{eq:step1.comp.eq1} holds true for $l:=\max( l_{\bc})$.
We show that $\tilde u$
preserves the $V$-filtration if $N\gg1$, and hence induces
the isomorphism \eqref{eq:step1.eq1}.
Fix a large enough $k>0$ such that $t^{-k}H\supset V_{0}H_\dR$ 
and
$t^kH^{(N)}\subset V_{<-1}H^{(N)}_\dR$.
We have $(\tilde u\partial_t-\partial_t\tilde u)(t^{-k}H)\subset
t^{N-l-k}H^{(N)}\subset t^{N-l-2k}V_{-1}H^{(N)}_\dR$ by \eqref{eq:step1.comp.eq1}.
Take $N\gg1$ so that one has
$(\tilde u\partial_t-\partial_t\tilde u)(t^{-k}H)\subset V_{<-1}H^{(N)}_\dR$.
We then have the $K[\![t]\!]$-linear map
\[
\tilde u:t^{-k}H\lra H^{(N)}_{\dR}/V_{<-1}H^{(N)}_\dR
\]
that is compatible with the action of $t\partial_t$, and factors through 
$t^{-k}H/V_{-2k}H_\dR$ as $\tilde u(V_{-2k}H_\dR)\subset \tilde u(t^kH)=
t^k H^{(N)}\subset V_{<-1}H^{(N)}_\dR$.
The left hand side contains $V_0H_\dR$.
By definition of the $V$-filtration, 
one has
$\tilde u(V_\alpha H_\dR)\subset V_\alpha H^{(N)}_\dR$ for any $-1\leq \alpha< 0$
as required.

Next, we show $\iota_{\infty,K}\circ u=\iota_{\infty,K}$.
To do this, it is enough to show that for a given (large) $p>0$, there is a large 
enough $N$ such that
\[
\int_\delta\frac{\bx^{\bc}}{F_f^r}\frac{d\bx}{\bx}-
\int_\delta\frac{\bx^{\bc}}{(F^{(N)}_f)^r}\frac{d\bx}{\bx}\in t^p\C[\![t]\!]
\]
for any (piecewise smooth) cycle $\delta$ in
$\P^{n-1}$ which does not meet $X_f$ or $X^{(N)}_f$.
However, this follows from the fact that the order of 
\[
\int_\delta\frac{\bx^{\bc}}{F_f^r}\frac{d\bx}{\bx}
-\int_\delta\frac{\bx^{\bc}}{(F^{(N)}_f)^r}\frac{d\bx}{\bx}
=\int_\delta \frac{(F^{(N)}_f)^r-F_f^r}{(F_fF^{(N)}_f)^r}\bx^{\bc}\frac{d\bx}{\bx}
\]
at $t=0$ goes to infinity when $N\to\infty$.

\medskip

We have constructed the isomorphism \eqref{eq:step1.eq1}, so 
the limiting periods of $X_f/\Delta$ agree with those of $X^{(N)}_f/\Delta$.
There remains to show that the family $X^{(N)}_f/\Delta$ is adapted to the setting
of Theorem \ref{thm:main}, where $(f_1,\ldots,f_m)$ is replaced with 
$f':=(f_1,\ldots,f_m,t^N,\ldots,t^N)$.
Obviously, the conditions \eqref{eq:condition (i)} and \eqref{eq:condition (ii)} are satisfied.
Let $A':\Z^{m+n}\to \Z^n$ be an additive homomorphism such that the image of the $i$-th standard vector of $\Z^{m+n}$ is $\ba_i$ for any $i=1,\dots,m$ and $\ba_{m+j}=d\be_j\in \Z^n$ for any $j=1,\dots,n$.
Let
$\be_1^*,\ldots,\be_{m+n}^*$
be the dual basis of the standard basis $\be_1,\ldots,\be_{m+n}\in \Z^{m+n}$.
Then the vector \eqref{eq:w-vec} turns out to be
\[\bw_{f'}=\sum_{i=1}^m\ord_t(f_i)\be_i^*+N(
\be_{m+1}^*+\cdots+\be_{m+n}^*).
\]
By the assumption, 
$\bw_f$ does not belong to the skeleton 
$\mathrm{Sk}(\Fan(A))$.
Then, by Lemma \ref{lem:compare.skeleton}, this implies that
$\bw_{f'}$ does not belong to $\mathrm{Sk}(\Fan(A'))$
if $N\gg1$.
This completes the proof.

\subsubsection{Step 2}\label{sect:step2}
It is enough to prove Theorem \ref{thm:main}
when the polynomial
$F_f$ in \eqref{F_f} contains monomials
$x_1^d,\ldots,x_n^d$.
Until the end of the proof, we re-define the notation as follows.
Let 
$F=\sum_{i=1}^mz_i\bx^{\ba_i}+z_{m+1}x_1^d+\cdots+z_{m+n}x_n^d$ and
\begin{equation}\label{eq:A matrix of Fermat deformations}
A=\begin{bmatrix}
    \ba_1&\cdots&\ba_m&d\be_1&\cdots&d\be_n
\end{bmatrix}    
\end{equation}
where $\be_i={}^t(0,\ldots,1,\ldots,0)$ is the $i$-th standard column vector.
Let $X\to S\subset \Spec\C[z_i]_{1\leq i\leq m+n}$ be a projective smooth
family of hypersurfaces defined by $F$, and put $U=\P^{n-1}_S\setminus X$.
Let $f=(f_1,\ldots,f_{m+n})\in (\O_t^{an}[t^{-1}]\setminus\{0\})^{m+n}$ and
\[
F_f=\sum_{i=1}^m f_i(t)\bx^{\ba_i}+f_{m+1}(t)x_1^d+\cdots+f_{m+n}(t)x_n^d,
\]
which satisfy the conditions \eqref{eq:condition (i)}
and \eqref{eq:condition (ii)}.
Let \[
H^{n-1}_\infty(X_f/\Delta)_{\mathrm{prim},K}
=(H_{\dR,\infty,K},H_{B,\infty},F^\bullet H_{\dR,\infty},W_\bullet H_{B,\infty},\iota_{\infty,K}) 
\]
be the limiting MHS of the primitive part, where 
$H_{B,\infty}=H^{n-1}(U_{\bz},\Q)$ and $H_{\dR,\infty,K}$ is the direct sum of
the graded pieces of 
the de Rham cohomology of $U_f=\P^{n-1}_\Delta\setminus X_f$
by the $V$-filtration.
In particular, any element $\omega\in H_{\dR,\infty,K}$ is represented by a $K(\!(t)\!)$-linear combinations of $\{\omega_{\bc}\}_{\bc\in I}$
by Lemma \ref{lem:Fermat-dR-lemma}.
By assumption that $\bw_f\notin\mathrm{sk(Fan}(A))$, the
polyhedral subdivision 
$T:=S(\bw_f)$ is a regular triangulation.
Let 
\[
H^B_{n-1}(U_{\bz},\Q)\lra \Sol^T(M_A(\bc)),\quad \delta\longmapsto\int_\delta
\omega_{\bc}
\]
be the linear map which associates asymptotic expansions to homology cycles
(cf. \eqref{eq:betti}).
Let $f^*:\Sol^T(M_A(\bc))\to\C(\!(t^{1/N_A})\!)[\log t]$
be the pull-back by $t\mapsto\bz=(f_i(t))_i$.
Let 
\begin{equation}\label{eq:periods-expansion}
\int_\delta \omega_{\bc}=\sum g_{i_1\ldots i_{m+n}}(\bz)(\log z_1)^{i_1}\cdots(\log z_{m+n})^{i_{m+n}}    
\in\Sol^T(M_A(\bc))
\end{equation}
where the sum is a finite sum.
Let
\[f^*\left(\int_\delta\omega_{\bc}\right)=g_0(t)+g_1(t)\log t+\cdots+g_k(t)(\log t)^k,
\quad (\,g_i(t)\in\C(\!(t^{1/N_A})\!)\,)\]
be an expansion arond $t=0$.
To prove Theorem \ref{thm:main}, 
in view of the definition \eqref{eq:iota infty} of $\iota_\infty$, 
it is enough to prove that
every coefficients of $g_0(t)$
lie in the ring in Theorem \ref{thm:main}.
Every $g_i(t)$ are linear 
combinations of $\{f^*g_{i_1\ldots i_{m+n}}(\bz)\}_{i_1,\ldots,i_{m+n}}$ over a ring 
$K[\![t]\!][\log(\mathbf{in}f_j)]_{1\leq j\leq m+n}$.
For any $\alpha\in\frac{1}{N_A}\Z$,
the coefficient of $f^*g_{i_1\ldots i_{m+n}}(\bz)$ of $t^\alpha$ 
is a (finite) linear 
combination of coefficients of the series $g_{i_1\ldots i_{m+n}}(\bz)$ 
over a field $K[(\mathbf{in}f_j)^{1/N_A}]_{1\leq j\leq m+n}$
by Lemma \ref{prop:finite truncation}.
Therefore, the proof of Theorem \ref{thm:main} is reduced to
showing the following assertion. 
\begin{equation}\label{eq:step2-quot}
\text{
Any coefficients of 
$g_{i_1\ldots i_{m+n}}(\bz)$ 
lie in a ring $R_{A}:=\Image[\PG(N_A)\ot \Q(e^{\pi\ii/N_A})\to\C]$.
}    
\end{equation}
To prove this, we trace the proof of 
Theorem \ref{thm:arbitrary triangulation}.
When $T=T(\Fer)$, the assertion \eqref{eq:step2-quot} is immediate from Lemmas 
\ref{lem:key-lem} and \ref{lem:image of the integration map}.
Let $T$ be arbitrary.
There is a sequence of regular triangulations $T_1=T(\Fer),T_2,\ldots,T_\ell=T$ such that
$T_i$ and $T_{i+1}$ share a facet for each $i$.
Let $\bc(\varepsilon)=\bc+\varepsilon\bc'$ with $\bc'\in\Q^n$ a generic vector.
Recall the connection formula (Theorem \ref{thm:connection formula}), which allows us
to describe the isomorphism
$\rho_i:\Sol^{T_i}(M_A(\bc(\varepsilon)))\os{\sim}{\to}\Sol^{T_{i+1}}(M_A(\bc(\varepsilon)))$ explicitly.
By Lemma \ref{prop:connection formula},
the isomorphism preserves the submodules of the SST-limits over $R_A$,
\[
\rho_i\left(\lim_{\varepsilon\to0}\Sol^{T_i}(M_A(\bc(\varepsilon)))_{R_A}\right)= \lim_{\varepsilon\to0}\Sol^{T_{i+1}}(M_A(\bc(\varepsilon)))_{R_A}.
\]
Since the period integral $\int_\delta\omega_{\bc}$ belongs to the SST-limit
$\lim_{\varepsilon\to0}\Sol^{T(\Fer)}(M_A(\bc(\varepsilon)))_{R_A}$ over $R_A$ by
Lemmas 
\ref{lem:key-lem} and \ref{lem:image of the integration map}, we have
\[
\int_\delta\omega_{\bc}\in \lim_{\varepsilon\to0}\Sol^T(M_A(\bc(\varepsilon)))_{R_A}.
\]
Now the assertion \eqref{eq:step2-quot} follows from Theorem \ref{thm:coefficients}. This completes the proof.

\section{Perturbation of the generalized Dwork family}\label{sect:Dwork family}
In this section, we consider limiting periods of a family $X_f/\Delta$ when the defining equation $F$ is given by \eqref{eq:Fermat deformation} and $f$ satisfies a condition described below.
Roughly speaking, we describe a condition that  $X_f/\Delta$ is close to those of the generalized Dwork family.
A detailed computation for this class of degeneration yields a refinement of the connection formula as in Theorem \ref{thm:analytic continuation of p.Dwork}, which provides a concrete class of examples of Theorem \ref{thm:main}.

\subsection{Limiting periods of the generalized Dwork family}\label{subsec:Dwork family}
In the literature of periods, a well-studied class of
hypersurfaces is the {\it generalized Dwork family} 
by Katz \cite{katz2010another}.
It is a special class of Fermat deformation \eqref{eq:Fermat deformation} where $m=1$:
\begin{equation}\label{eq:generalized.Dwork.eq}
F:=z_1\bx^{\ba}+z_2x_1^{d}+\cdots+z_{n+1}x_n^{d},
\end{equation}
where $\ba=(a_1,\dots,a_n)$.
We write $D(z_1,\dots,z_{n+1};\ba)$ for the corresponding family $X/S$ of projective hypersurfaces.
We work over the ground field $K=\Q$.
Put\[
A:=\begin{bmatrix} a_1&d\\
\vdots&&\ddots\\
a_n&&&d
\end{bmatrix},\quad \bu:=\begin{bmatrix}
    d\\ -a_1\\ \vdots \\ -a_n
\end{bmatrix}.
\]
We note that there are exactly two regular triangulations: one is $T_1=T(\Fer)$ whose maximal simplex is exactly $\{ 2,\dots,n+1\}$; the other one is $T_2$ whose maximal simplices are $\{ \{1,2,\dots, \widehat{(j+1)},\dots,n+1\}\mid j=1,\dots,n,\ \ a_{j}\neq 0\}$.
We first focus on $T_1$.
Let $\bc\in\Z_{> 0}^n$ satisfy 
$|\bc|=rd$.
The vector defined in \eqref{eq: gamma.bc.bp} reads
\[
\bg^{\bc}:=\begin{bmatrix}0\\ -\frac1d c_1\\ \vdots \\ -\frac1d c_n\end{bmatrix},\quad
\bg^{\bc}_p=\bg^{\bc}+\frac{p}{d}\bu,\quad(p\in\Z).
\]
Let $P=P_{\bc}\subset\{ p\in\Z|0\leq p<d \}$ be a finite set of integers defined as in \eqref{eq:P_c}.
In what follows, we assume $\gcd(d,a_1,\ldots,a_n)=1$.
Then, $L_A$ is generated by $\bu$, so that one has
\begin{equation}\label{eq:phi_p}
\varphi(\bg^{\bc}_p;\bz)=\sum_{i=0}^\infty\frac{1}{\Gamma(1+\bg^{\bc}_p+i\bu)}\bz^{\bg^{\bc}_p+i\bu},
\quad(0\leq p<d).
\end{equation}
For $\bg_1,\bg_2\in \C^n$, 
we choose $\bg_1',\bg_2'\in\C^{n'}$ so that an identity
\[
\prod_{i=1}^n\frac{x-\gamma_{1i}}{x-\gamma_{2i}}
=\prod_{i=1}^{n'}\frac{x-\gamma'_{1i}}{x-\gamma'_{2i}}
\]
holds true in the field of rational functions $\C(x)$.
Here, the right-hand side is an irreducible fraction.
We then define
\[
\text{\rm Cancell}
\left({\bg_1\atop \bg_2}\right):=
\left({\bg'_1\atop \bg_2'}\right)
\]
and say that $\ba',\bb'$ is the {\it cancellation} of $\ba,\bb$.
Obviously, the cancellation is unique up to order.
We put
\begin{align*}
&\widetilde\balpha=\left(\frac{\ell}{a_k}+\frac{c_k}{da_k}\right);\text{$k,\ell$ such that }a_k>0\text{ and }0\leq 
\ell<a_k,\\
&\widetilde\bbeta=\left(1,\frac1d,\frac2d,\ldots,\frac{d-1}d\right),
\end{align*}
and put
\begin{equation}\label{eq:cancell-2}
\left({\balpha\atop \bbeta}\right)=
\text{Cancell}\left({\widetilde\balpha\atop
\widetilde\bbeta}\right).
\end{equation}
RWe reorder $\balpha$ so that
\begin{equation}\label{eq:cancell-3}
\balpha=(\overbrace{\alpha_1,\ldots,\alpha_1}^{s_1},\overbrace{\alpha_2,\ldots,\alpha_2}^{s_2},\ldots,
\overbrace{\alpha_N,\ldots,\alpha_N}^{s_N}),\quad
\alpha_1<\cdots<\alpha_N.
\end{equation}
Note that 
\[
0<\frac{\ell}{a_k}+\frac{c_k}{da_k}=\frac{1}{a_k}(\ell+\frac{c_k}{d})<\frac{1}{a_k}(\ell+1)\leq1.
\]
In particular, one has $\alpha_i-\beta_j\not\in\Z$ for any $i,j$.
The following lemma immediately follows from the definition.

\begin{lemma}\label{lem:lemma P}
$\bbeta=\left(1-\frac{p}d; p\in P\right)$.
In particular, $\sharp P=s_1+s_2+\cdots+s_N$.
\end{lemma}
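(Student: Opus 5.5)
The plan is to unwind the cancellation in \eqref{eq:cancell-2} directly, by finding exactly which entries of $\widetilde\bbeta$ also occur among the entries of $\widetilde\balpha$. As in \S\ref{sect:periods of Fermat deformations}, I take $\bc\in I$, so $0<c_k<d$ for every $k$. With $m=1$, the definition \eqref{eq:P_c} reads
\[
P=\{\,0\le p<d \mid d\nmid c_k+pa_k \text{ for all } k\,\}.
\]

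First, the entries of $\widetilde\bbeta$ are $j/d$ for $j=1,\dots,d$, and they are pairwise distinct. Hence, in the irreducible fraction $\prod_i (x-\widetilde\alpha_i)/\prod_j(x-\widetilde\beta_j)$, each $j/d$ either cancels against one copy of a root of the numerator or survives with multiplicity one. So $\bbeta$ is exactly the list of those $j/d$ that are \emph{not} entries of $\widetilde\balpha$.

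Next I decide when $j/d$ is an entry of $\widetilde\balpha$. The equation $j/d=(\ell+c_k/d)/a_k$ is equivalent to $\ell=(ja_k-c_k)/d$. I need such an $\ell$ to be an integer with $0\le\ell<a_k$, for some $k$ with $a_k>0$. The bounds $1\le j\le d$ and $0<c_k<d$ give
\[
-1<\frac{ja_k-c_k}{d}<a_k .
\]
So as soon as this quotient is an integer, it automatically lies in $[0,a_k)$. Therefore $j/d$ occurs in $\widetilde\balpha$ if and only if $d\mid ja_k-c_k$ for some $k$ with $a_k>0$.

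For indices with $a_k=0$ the congruence $d\mid ja_k-c_k$ would force $d\mid c_k$, which never holds. So including or excluding those $k$ changes nothing, matching the fact that such $k$ never obstruct membership in $P$. Now substitute $j=d-p$ with $0\le p<d$. Since $ja_k-c_k\equiv-(c_k+pa_k)\bmod d$, the entry $1-p/d$ survives in $\bbeta$ if and only if $d\nmid c_k+pa_k$ for all $k$, that is, if and only if $p\in P$. This proves $\bbeta=(1-\tfrac pd;\,p\in P)$.

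For the count, the length of $\widetilde\balpha$ is $\sum_{a_k>0}a_k=|\ba|=d$, because $\bx^{\ba}$ has degree $d$. The length of $\widetilde\bbeta$ is also $d$. Cancellation removes the same number of entries from each side, so $\balpha$ and $\bbeta$ have equal length. Hence $s_1+\cdots+s_N=\sharp\balpha=\sharp\bbeta=\sharp P$.

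The only delicate step is the range check showing that $\ell$ automatically lies in $[0,a_k)$, together with the bookkeeping of multiplicities. The latter is harmless here precisely because the entries of $\widetilde\bbeta$ are distinct.
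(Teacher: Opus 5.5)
Your argument is correct and is the direct unwinding of the definitions that the paper has in mind when it says the lemma ``immediately follows from the definition''; the needed ingredients are all there: the entries of $\widetilde\bbeta$ are distinct, the range check $0\le\ell<a_k$ is automatic, the substitution $j=d-p$ gives the condition defining $P$, and $\sharp\widetilde\balpha=|\ba|=d=\sharp\widetilde\bbeta$ gives the count. You were also right to assume $0<c_k<d$ explicitly: the paper uses it only tacitly (in its remark that the entries of $\widetilde\balpha$ lie in $(0,1)$), and the statement fails without it, e.g.\ $d=2$, $\ba=(1,1)$, $\bc=(3,3)$ gives $P=\{0\}$ but $\bbeta=(1,\tfrac12)$.
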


For $\bg,\bg'\in \C^{m}$ such that $\gamma_i\not\in\Z_{\leq0}$ for all $i$,
 we define a series $\cF(\bg,\bg';z)$ as follows,
 \[
 \cF(\bg,\bg';z):=\sum_{i=0}^\infty\prod_{k=1}^m
 \frac{\Gamma(\gamma_k+i)}{\Gamma(\gamma'_k+i)}z^i.
 \]
 The following proposition is an easy consequence of the multiplication formula of the Gamma function \cite[5.5.6]{olver2010nist}.
 We put
\[
z:=\left((-d)^{-d}\prod_{a_k>0}a_k^{a_k}\right)\bz^{\bu}.
\]
\begin{lemma}\label{thm:ex1}
For $p\in P$ one has the following identity:
\[
\varphi(\bg^{\bc}_p;\bz)=
\frac{\bz^{\bg_p^{\bc}}}{\Gamma(1+\bg_p^{\bc})}\frac{\Gamma(\frac{p}{d}+\bbeta)}
{\Gamma(\frac{p}{d}+\balpha)}
\cF(\frac{p}{d}+\balpha;\frac{p}{d}+\bbeta;z).
\]
\end{lemma}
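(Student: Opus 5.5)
We expand both sides term by term in $i$ and compare the coefficients of $\bz^{\bg^{\bc}_p+i\bu}$. By \eqref{eq:phi_p}, the $i$-th term of $\varphi(\bg^{\bc}_p;\bz)$ is $\bz^{\bg^{\bc}_p}\bz^{i\bu}/\Gamma(1+\bg^{\bc}_p+i\bu)$. Since $z^i$ is a constant times $\bz^{i\bu}$, the identity reduces to showing
\[
\frac{\Gamma(1+\bg^{\bc}_p)}{\Gamma(1+\bg^{\bc}_p+i\bu)}
=\left((-d)^{-d}\prod_{a_k>0}a_k^{a_k}\right)^{i}\prod_{j}\frac{\Gamma(\frac pd+\beta_j)\,\Gamma(\frac pd+\alpha_j+i)}{\Gamma(\frac pd+\alpha_j)\,\Gamma(\frac pd+\beta_j+i)}
\]
for every $i\ge 0$. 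The components of $1+\bg^{\bc}_p$ are $1+\frac pd$ in the first slot and $1-\frac{c_k+pa_k}{d}$ in slot $k+1$. Adding $i\bu$ shifts the first by $di$ and the $(k+1)$-th by $-a_ki$.

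For the first component, the Gauss multiplication formula gives
\[
\frac{\Gamma(1+\frac pd)}{\Gamma(1+\frac pd+di)}=d^{-di}\prod_{j=0}^{d-1}\frac{\Gamma(\frac{p+1+j}{d})}{\Gamma(\frac{p+1+j}{d}+i)}.
\]
Reindexing, the set $\{\frac{p+1+j}{d}\}_{0\le j<d}$ coincides with $\frac pd+\widetilde\bbeta$, where $\widetilde\bbeta=(1,\frac1d,\dots,\frac{d-1}d)$.

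For a component with $a_k>0$, write $b=1-\frac{c_k+pa_k}{d}$. By the reflection formula, or equivalently by $\Gamma(b)/\Gamma(b-n)=(-1)^n(1-b)_n$,
\[
\frac{\Gamma(b)}{\Gamma(b-a_ki)}=(-1)^{a_ki}\frac{\Gamma(1-b+a_ki)}{\Gamma(1-b)}.
\]
This step needs $b$ not to be a non-positive integer, which holds because $p\in P$, so no entry of $\bar\bc+h(p)$ vanishes. We then apply the multiplication formula with modulus $a_k$ to $1-b=\frac{c_k}{d}+\frac{pa_k}{d}$. This produces the factor $a_k^{a_ki}$ and the Pochhammer ratios for $\frac{1-b+\ell}{a_k}=\frac pd+\frac{\ell}{a_k}+\frac{c_k}{da_k}$ with $0\le\ell<a_k$, which is exactly $\frac pd+\widetilde\balpha$. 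Components with $a_k=0$ do not change.

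The signs multiply to $(-1)^{i\sum a_k}=(-1)^{di}$, since $\sum a_k=d$, and this combines with $d^{-di}$ into $(-d)^{-di}$. At this point the identity holds with $(\widetilde\balpha,\widetilde\bbeta)$ in place of $(\balpha,\bbeta)$.

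It remains to pass to the cancelled parameters. Whenever an entry of $\widetilde\balpha$ equals an entry of $\widetilde\bbeta$, the corresponding factors $\Gamma(x)\Gamma(y+i)/\big(\Gamma(y)\Gamma(x+i)\big)$ with $x=y$ cancel to $1$, so only $(\balpha,\bbeta)$ survives. This relies on the cancellation in \eqref{eq:cancell-2} being an exact equality of multisets, which matches the definition. Note also that entries with $p+1+j=d$ give $\frac pd+\beta$ values with integer parts that could hit poles. For $p\in P$, Lemma \ref{lem:lemma P} shows $\frac pd+\beta_j=1-\frac{p'}{d}+\frac pd$, and I would check directly that these values are positive.

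I expect the main obstacle to be the bookkeeping of signs and of the multiplication-formula constant. I would verify the constant first on the case $n=1$, $i=1$, before handling the general case.
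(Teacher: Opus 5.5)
Your proposal is correct and follows the route the paper indicates (the paper only remarks that the lemma is an easy consequence of the Gauss multiplication formula): compare terms, apply the multiplication formula with modulus $d$ in the $z_1$-slot and with modulus $a_k$ in the remaining slots after $\Gamma(b)/\Gamma(b-n)=(-1)^n(1-b)_n$, collect the sign $(-1)^{di}$ from $|\ba|=d$, and cancel equal Pochhammer factors. One slip to fix: the first entry of $1+\bg^{\bc}_p$ is $1+p$ (since $\frac pd\cdot d=p$), not $1+\frac pd$ --- your displayed identity is in fact the correct one for $\Gamma(1+p)/\Gamma(1+p+di)$ --- and your closing worry about poles is moot, since every $\frac pd+\beta$ with $\beta\in(0,1]$ is positive.
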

By Lemmas \ref{thm:ex1}, we get the Picard-Fuchs equation
of the generalized Dwork family. The following theorem is due to Negishi, proved 
in a different way from ours.
\begin{theorem}[{Negishi, \cite{negishi2022picard}}]
$\bt^{-\bg^{\bc}}\omega_{\bc}$ is annihilated by the hypergeometric operator
\[
P=\prod_{\beta_i\in\bbeta}(D_z+\beta_i-1)-z
\prod_{\alpha_i\in\balpha}(D_z+\alpha),\quad\left(D_z:=z\frac{d}{dz}\right).
\]
\end{theorem}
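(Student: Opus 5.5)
The plan is to show that the function
\[
\Phi(z):=\bz^{-\bg^{\bc}}\sum_{p\in P}\varphi(\bg^{\bc}_p;\bz),
\]
rewritten in the single variable $z$, is annihilated by $P$, and then to pass to $\omega_{\bc}$. Here $\bt^{-\bg^{\bc}}\omega_{\bc}$ is understood via its periods $\bz^{-\bg^{\bc}}\int_\delta\omega_{\bc}$. The passage to $\omega_{\bc}$ rests on Theorem \ref{thm:image of the integration map}, together with Lemma \ref{lem:image of the integration map} and Lemma \ref{lem:key-lem}. By Lemma \ref{lem:key-lem}, the period of $\omega_{\bc}$ over $\delta_{\bz}$ equals, up to the constant $\frac{(-1)^r}{(r-1)!}(2\pi\ii)^n$, the sum $\sum_{p\in P}\varphi(\bg^{\bc}_p;\bz)$. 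By Lemma \ref{lem:image of the integration map}, the periods over all cycles span exactly the $\C$-span of the individual $\varphi(\bg^{\bc}_p;\bz)$ for $p\in P$. Since the Gauss--Manin module is determined by its solutions (regular holonomic, analytic continuation), it therefore suffices to show that each function $\bz^{-\bg^{\bc}}\varphi(\bg^{\bc}_p;\bz)$ is annihilated by $P$ for every $p\in P$. This only needs to be checked on the open set $U_{T(\Fer)}$; it then extends by analytic continuation, and the statement for the de Rham class follows because the solution map is injective.

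For each fixed $p$ I use Lemma \ref{thm:ex1}. Since $\bz^{\bg^{\bc}_p}=\bz^{\bg^{\bc}}\,(\bz^{\bu})^{p/d}$, we get
\[
\bz^{-\bg^{\bc}}\varphi(\bg^{\bc}_p;\bz)=C_p\,z^{p/d}\,\cF\!\left(\tfrac pd+\balpha;\tfrac pd+\bbeta;z\right)
\]
for a constant $C_p$. The next step is a routine coefficient computation. Write $w(z)=\sum_i A_i z^{i+p/d}$ with
\[
A_i=\prod_k\frac{\Gamma(\frac pd+\alpha_k+i)}{\Gamma(\frac pd+\beta_k+i)}.
\]
Acting with $D_z$ on $z^{i+p/d}$ produces the factor $i+\frac pd$. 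The recursion
\[
A_{i+1}\prod_k\Bigl(\tfrac pd+\beta_k+i\Bigr)=A_i\prod_k\Bigl(\tfrac pd+\alpha_k+i\Bigr)
\]
then gives
\[
\prod_k\Bigl(D_z+\beta_k-1\Bigr)w=z\prod_k\Bigl(D_z+\alpha_k\Bigr)w
\]
for all terms with $i\geq0$. This uses the fact that the number of entries of $\balpha$ equals the number of entries of $\bbeta$, which holds because $\sum_{a_k>0}a_k=d$ after cancellation.

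The lowest term needs separate care, and this is where Lemma \ref{lem:lemma P} is essential. The $i=0$ term requires $\prod_k(\frac pd+\beta_k-1)=0$. By Lemma \ref{lem:lemma P}, $\bbeta$ contains $1-\frac pd$, so the factor with $\beta_k=1-\frac pd$ vanishes and the identity holds at $i=0$.

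The main obstacle I expect is the exact matching of normalizations. Lemma \ref{thm:ex1} has to be derived carefully from the Gauss multiplication formula so that the rescaling of $z$ by $(-d)^{-d}\prod a_k^{a_k}$ produces precisely the Pochhammer factors $\balpha,\bbeta$ after cancellation. One must also check that the cancellation leaves no spurious exponents $p\notin P$. The terms with $p\notin P$ vanish identically by the $1/\Gamma$ argument in the proof of Lemma \ref{lem:key-lem}, so they cause no trouble.
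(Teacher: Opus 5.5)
Your proposal is correct and follows the same route the paper has in mind. The paper gives no separate proof: it credits the theorem to Negishi and only remarks that the Picard--Fuchs equation follows from Lemma \ref{thm:ex1}. Your argument makes that derivation explicit. The periods of $\omega_{\bc}$ lie in the span of $\varphi(\bg^{\bc}_p;\bz)$, $p\in P$, by Lemma \ref{lem:key-lem} and Theorem \ref{thm:image of the integral of omega_c}. Each of these is a constant times $z^{p/d}\cF(\frac pd+\balpha;\frac pd+\bbeta;z)$, which $P$ kills termwise: the recursion handles the higher coefficients, and the $i=0$ term vanishes because $1-\frac pd\in\bbeta$ (Lemma \ref{lem:lemma P}). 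One minor point: the equality $\sharp\balpha=\sharp\bbeta$ is not used in the termwise identity itself; it matters only for $\cF$ to converge near $z=0$.
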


\begin{theorem}\label{thm:MB}
    Let $d,n$ be positive integers and let $\balpha=(\alpha_1,\dots,\alpha_n)\in\C^n$ and $\ba=(a_1,\dots,a_n)\in\Z_{\geq 0}^n$.
    We assume that $|\ba|=d$, $0\leq p<d$ is an integer, and that ${\rm  Re}(\alpha_i-a_i)>0$ for any $i=1,\dots,n$ such that $a_i\neq 0$.
    We set    \begin{equation}\label{eq:lacunary series}
        \varphi_p(\zeta;\ba,\balpha):=\sum_{k=0}^\infty \frac{\Gamma(\balpha+k \ba)}{\Gamma(1+p+dk)}\zeta^{dk}.
    \end{equation}
    Then, for any $0<c<1$, the integral
    \begin{align}
         &I_p(\zeta;\ba,\balpha)\nonumber\\
        =&\frac{1}{2\pi\ii}\frac{2^{d-1}}{d}\int_{c+d-\ii\infty}^{c+d+\ii\infty}\frac{\Gamma(s)\displaystyle\prod_{i=1}^{d-1}\sin\left(\frac{\pi(s+i)}{d}\right)}{(1-s)_p}\Gamma\left(\balpha-\frac{\ba}{d}s\right)(e^{-\frac{\pi}{d}\ii}\zeta)^{-s}ds\label{eq:integral}\\
        =&\frac{(-1)^p}{2\pi\ii}{2^{d-1}}\int_{c-\ii\infty}^{c+\ii\infty}{\Gamma(ds)\displaystyle\prod_{i=1}^{d-1}\sin\left({\pi(s+\frac{p+i}{d})}\right)}\Gamma\left(\balpha-\frac{p}{d}{\ba}-{\ba}s\right)(e^{-\frac{\pi}{d}\ii}\zeta)^{-d{s}-p}ds\label{eq:integral2}
    \end{align}
    is convergent when
    \begin{equation}\label{eq:condition}
        |{\rm arg}(e^{-\frac{\pi}{d}\ii}\zeta)|<\frac{\pi}{d}.
    \end{equation}
    Here, we set $(1-s)_p:=(1-s)(2-s)\cdots (p-s)$.
    Moreover, the integral \eqref{eq:integral} coincides with the series \eqref{eq:lacunary series} if $|\zeta|<\!\!<1$ and it coincides with 
    \begin{equation}\label{eq:residues}
        (-1)^{p+1}{2^{d-1}}\sum_{s_0\in S}\underset{s=s_0}{\rm Res}\left\{{\Gamma(ds)\displaystyle\prod_{i=1}^{d-1}\sin\left({\pi(s+\frac{i}{d})}\right)}\Gamma\left(\balpha-\frac{p}{d}{\ba}-{\ba}s\right)(e^{-\frac{\pi}{d}\ii}\zeta)^{-d{s}-p}\right\}
    \end{equation}
    if $|\zeta|>\!\!>1$. 
    Here $S$ is the set of poles of the integrand on the right half plane 
    \begin{equation}\label{eq:right half plane}
        \{s\in\C\mid {\rm Re}(s)>0\}.
    \end{equation}
\end{theorem}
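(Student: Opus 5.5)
This is a Mellin--Barnes argument. I would treat \eqref{eq:integral} as the primary object and derive \eqref{eq:integral2} from it by the substitution $s\mapsto d s+p$. Under this change $\Gamma(s)/(1-s)_p$ becomes $\Gamma(ds+p)/(1-ds-p)_p$. Since $(1-ds-p)_p=(-1)^p(ds)(ds+1)\cdots(ds+p-1)$, we get
\[
\frac{\Gamma(ds+p)}{(1-ds-p)_p}=(-1)^p\Gamma(ds).
\]
Also $\sin(\pi(ds+p+i)/d)=\sin(\pi(s+\tfrac{p+i}{d}))$, $ds$ contributes the factor $d$ that cancels $1/d$, and the contour $\mathrm{Re}\,s=c+d$ moves to $\mathrm{Re}\,s=(c+d-p)/d$. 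Sliding this line to $\mathrm{Re}\,s=c$ crosses no poles: those of $\Gamma(ds)$ lie at $\mathrm{Re}\le0$, and those of $\Gamma(\balpha-\frac{p}{d}\ba-\ba s)$ lie to the right of the region where $\mathrm{Re}(\alpha_i-a_i)>0$ controls them. The factor $(1-s)_p$ in the original form is precisely what keeps the integrand in \eqref{eq:integral} regular at $s=1,\dots,p$, so no residues arise there either.

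For convergence I would apply Stirling's formula on vertical lines. As $|\mathrm{Im}\, s|\to\infty$, $|\Gamma(ds)|\sim e^{-\pi d|\mathrm{Im}\, s|/2}$ and $\prod_i|\Gamma(\alpha_i-a_i s)|\sim e^{-\pi|\ba||\mathrm{Im}\, s|/2}=e^{-\pi d|\mathrm{Im}\,s|/2}$. The product of $d-1$ sines grows like $e^{\pi(d-1)|\mathrm{Im}\, s|}$, and $|w^{-ds}|=e^{d\arg(w)\mathrm{Im}\, s}\cdot(\text{power})$ with $w=e^{-\pi\ii/d}\zeta$. Adding exponents gives total decay $e^{-\pi|\mathrm{Im}\,s|+d|\arg w||\mathrm{Im}\,s|}$, which is integrable exactly when $|\arg w|<\pi/d$; this is \eqref{eq:condition}. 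I expect this exponent bookkeeping, together with the polynomial powers coming from $\balpha$, to be the main technical point. The hypothesis $|\ba|=d$ is what makes the Gamma factors balance.

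For $|\zeta|\ll1$, I would close the contour of \eqref{eq:integral} to the left. The poles of $\Gamma(s)$ at $s=-j$ are partially cancelled: $\prod_{i=1}^{d-1}\sin(\pi(s+i)/d)$ vanishes at $s=-j$ unless $j\equiv0\pmod d$. So only the poles $s=-dk$ survive. At these, $\mathrm{Res}\,\Gamma=1/(dk)!$ up to sign. The product of sines equals $\prod_{i=1}^{d-1}\sin(\pi i/d)=d/2^{d-1}$ up to the sign $(-1)^{k(d-1)}$, $(1-s)_p$ becomes $(1+dk)_p$, and $(dk)!\,(1+dk)_p=(dk+p)!=\Gamma(1+p+dk)$. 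The remaining factor is $\Gamma(\balpha+k\ba)(e^{-\pi\ii/d}\zeta)^{dk}$. The phase $e^{-\pi\ii k}$ cancels the sign $(-1)^{k(d-1)}(-1)^k$, giving $1$ overall, and this reproduces \eqref{eq:lacunary series}. Discarding the arcs needs the same Stirling estimate, now with $|\zeta|$ small; the leftward shift is legitimate because the Gamma ratio grows at most factorially while the series converges.

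For $|\zeta|\gg1$, I would instead close \eqref{eq:integral2} to the right, half-plane \eqref{eq:right half plane}. Clockwise orientation gives the minus sign, yielding \eqref{eq:residues} with prefactor $(-1)^{p+1}2^{d-1}$. The right arcs vanish by the same estimate since $|\zeta|^{-d\,\mathrm{Re}\,s}\to0$. Convergence of the residue sum for large $|\zeta|$ follows because the poles of $\Gamma(\balpha-\frac{p}{d}\ba-\ba s)$ form arithmetic progressions whose residues decay at least geometrically in $|\zeta|^{-d\,\mathrm{Re}\,s}$. The main obstacle I foresee is controlling the integrand on large arcs uniformly between the poles, so that the arc contributions really vanish. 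I would handle this by choosing radii that stay a fixed distance from all pole lattices.
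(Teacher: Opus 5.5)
Your plan is the paper's proof step for step: the substitution $s\mapsto ds+p$, Stirling on vertical lines, closing \eqref{eq:integral} to the left, and closing \eqref{eq:integral2} to the right. Your tally $e^{-\pi|t|+d|\arg w|\,|t|}$ is a cleaner version of the paper's estimate. On the left, the sine product kills every pole of $\Gamma(s)$ except those at $-d\Z_{\ge0}$, and $\prod_{i=1}^{d-1}\sin(\pi i/d)=d/2^{d-1}$ normalises the residues.

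One step, however, fails as you justify it. You claim that moving the line from $\mathrm{Re}\,s=(c+d-p)/d$ to $\mathrm{Re}\,s=c$ crosses no poles, and similarly that closing \eqref{eq:integral} to the left meets only $-d\Z_{\ge0}$. The hypothesis $\mathrm{Re}(\alpha_i-a_i)>0$ only puts the poles $s=d(\alpha_i+k)/a_i$ of $\Gamma(\balpha-\frac{\ba}{d}s)$ in $\mathrm{Re}\,s>d$. Equivalently, it puts the poles $s=(\alpha_i+k)/a_i-\frac pd$ of $\Gamma(\balpha-\frac pd\ba-\ba s)$ in $\mathrm{Re}\,s>1-\frac pd$. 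This does not place them to the right of the lines $\mathrm{Re}\,s=c+d$, $\mathrm{Re}\,s=(c+d-p)/d$ or $\mathrm{Re}\,s=c$.

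For a concrete failure, take $d=2$, $\ba=(1,1)$, $p=1$, $\balpha=(1.1,1.1)$ and $c=\frac12$. The double pole of $\Gamma(0.6-s)^2$ at $s=0.6$ lies between $\mathrm{Re}\,s=0.5$ and $\mathrm{Re}\,s=0.75$, so the two integrals differ by its nonzero residue. Also, the pole $s=2.2$ of $\Gamma(1.1-\frac s2)^2$ lies left of $\mathrm{Re}\,s=2.5$. Closing \eqref{eq:integral} leftward therefore gives \eqref{eq:lacunary series} plus an extra term $w^{-2.2}(A+B\log w)$ with $B\neq0$.

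The paper's proof makes the same tacit assumption (``the poles in this domain are $-d\Z_{\geq0}$''), so the statement as printed needs a restriction, which you should make explicit. Put $\mu:=\min_{a_i\neq0}\mathrm{Re}(\alpha_i)/a_i>1$ and take $0<c<\min\{d(\mu-1),\,\mu-\frac pd\}$. Then both lines separate the poles of the $\Gamma(s)$ (resp.\ $\Gamma(ds)$) part from those of the $\balpha$-part, and your argument goes through. Alternatively, deform the contour as in Remark~\ref{rem:integration contour}.

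There are also three smaller slips. First, the factor $(1-s)_p$ in the denominator creates, rather than removes, the candidate poles at $s=1,\dots,p$. They are removable because $\prod_{i=1}^{d-1}\sin(\pi(s+i)/d)$ vanishes at $s=1,\dots,d-1$.

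Second, $\mathrm{Res}_{s=-dk}\Gamma(s)=(-1)^{dk}/(dk)!$, so the cancellation is $(-1)^{dk}(-1)^{k(d-1)}(-1)^k=1$. With $(-1)^k$ in place of $(-1)^{dk}$, as in your tally, the signs do not cancel when $d$ is even and $k$ is odd.

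Third, closing \eqref{eq:integral2} to the right yields the residues of its own integrand, which contains $\sin(\pi(s+\frac{p+i}{d}))$. The factor $\sin(\pi(s+\frac{i}{d}))$ printed in \eqref{eq:residues} agrees with this only for $p=0$. This is a misprint in the statement; the paper's proof likewise identifies \eqref{eq:residues} with the residue sum of the integrand of \eqref{eq:integral2}. So you should not claim to obtain \eqref{eq:residues} literally.
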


\begin{proof}
    The integral \eqref{eq:integral} is equal to the one \eqref{eq:integral2} by a change of integration variable.
    Let us first confirm that the integral \eqref{eq:integral} is convergent.
    By Stirling's formula, for any $0<\varepsilon<\!\!<1$, there exists a constant $C_\varepsilon>0$ so that the Gamma function decays as
    \begin{equation}
        |\Gamma(c+\ii t)|\leq C_\varepsilon e^{-(\frac{\pi}{2}-\varepsilon)|t|}
    \end{equation}
    as $t\to\pm\infty$.
    Thus, for any $0<\varepsilon<\!\!<1$, there exists a constant $C_\varepsilon>0$ so that the product $\Gamma(s)\Gamma\left(\balpha-\frac{\ba}{d}s\right)$ decays as
    \begin{equation}
        |\Gamma(c+\ii t)\Gamma\left(\balpha-\frac{\ba}{d}(c+\ii t)\right)| \leq C_\varepsilon e^{-({\pi}-\varepsilon)|t|}
    \end{equation}
    as $t\to\pm\infty$.
    On the other hand, the function $\prod_{i=1}^{d-1}\sin\left(\frac{\pi(s+i)}{d}\right)$ is bounded as 
    \begin{equation}
        \left|\prod_{i=1}^{d-1}\sin\left(\frac{\pi(c+\ii t+i)}{d}\right)\right|\leq Ce^{\frac{(d-1)\pi}{d}|t|}.
    \end{equation}
    Therefore, the convergence of the integral \eqref{eq:integral} is true under the condition \eqref{eq:condition}.
    The rest of the proof is residue calculus.
    The integrand of \eqref{eq:integral} is a meromorphic function on the domain
    \begin{equation}\label{eq:half plane}
        \{s\in\C\mid {\rm Re}(s)<c+d\}.
    \end{equation}
    The poles in this domain are $-d\Z_{\geq 0}$.
    The residue of $\Gamma(s)$ at the pole $s=-dk$ is $\frac{(-1)^{dk}}{(dk)!}$.
    We note that the combinatorial identity 
    \begin{equation}
        \prod_{i=1}^{d-1}\sin\left(\frac{\pi i}{d}\right)=\frac{d}{2^{d-1}}
    \end{equation}
    holds true.
    In fact, $\prod_{i=1}^{d-1}\sin\left(\frac{\pi i}{d}\right)=\frac{1}{2^{d-1}}\prod_{i=1}^{d-1}(1-e^{\frac{2\pi i}{d}\ii })$ and 
    \begin{equation}
        \prod_{i=1}^{d-1}(1-e^{\frac{2\pi i}{d}\ii })=\lim_{x\to 1}\frac{x^d-1}{x-1}=d.
    \end{equation}
    Thus, the series \eqref{eq:lacunary series} is the sum of residues of the integrand of \eqref{eq:integral} on the domain \eqref{eq:half plane}.
    Similarly, \eqref{eq:residues} is the sum of residues of the integrand of of \eqref{eq:integral2} on the domain \eqref{eq:right half plane}.
\end{proof}

\begin{remark}\label{rem:integration contour}
    When the assumption ${\rm Re}(\alpha_i-a_i)>0$ is violated but some other generic condition is fulfilled (e.g., ${\rm Im}(\alpha_i)\neq{\rm Im}(\alpha_j)$ for any $i\neq j$), one can continuously deform the integration contour of \eqref{eq:integral2} so that it separates the poles of $\Gamma(ds)$ from those of $\Gamma\left( \balpha-\frac{p}{d}\ba-\ba s\right)$.
\end{remark}

We set $\tilde\bz:=(z_2,\dots,z_{n+1})$.
A direct computation shows the following identity: 
\begin{align}
\varphi(\gamma^{\bc}_{\bp};\bz)&=(-1)^{p}\frac{\sin\pi(\frac{\bc}{d}+\frac{p}{d}\ba)}{\pi^n}\sum_{i\geq 0}\frac{\Gamma\left(\frac{\bc}{d}+(\frac{p}{d}+i)\ba\right)}{\Gamma(1+p+id)}(-z_1)^{p+id}\tilde{\bz}^{-\frac{\bc}{d}-(\frac{p}{d}+i)\ba}\nonumber\\
    &=z_1^p\tilde{\bz}^{-\frac{\bc}{d}-\frac{p}{d}\ba}\frac{\sin\pi(\frac{\bc}{d}+\frac{p}{d}\ba)}{\pi^n}\varphi_p(-z_1\tilde{\bz}^{-\frac{\ba}{d}};\ba,\frac{\bc}{d}+\frac{p}{d}\ba).\label{eq:computation}
\end{align}
The set of poles $S$ in \eqref{eq:residues} is given by $S=\{\alpha_i+j\mid i=1,\dots,N,\ j=0,1,\dots\}$.

\noindent\textbf{Example}.
Let $(n,d)=(6,7)$, $\ba=(2,1,\ldots,1)$ and $F=z_1\bx^{\ba}+z_2x_1^7+\cdots+z_7x_6^7$.
The index set $I$ defined by \eqref{eq:I} has $6,666$ elements and it is subdivided into $2,401$ equivalence classes \eqref{eq:definition:Ic}.
The only possible cardinalities of $I_{\bc}$ are $2,3,4$, or $6$, each of which has $1,080$, $780$, $540$ and $1$ kinds respectively.
Let us choose $\bc=(1,2,1,1,1,1)$.
Then, $P_{\bc}=\{0,1,2,4\}$ and 
\[
\balpha=(\frac17,\frac17,\frac17,\frac{1}{14}),\quad\bbeta=(1,\frac37,\frac57,\frac67).
\]
We set $\zeta=-z_1\tilde{\bz}^{-\frac{\ba}{d}}$.
The series $\varphi_p(\zeta;\ba,\frac{\bc}{d}+\frac{p}{d}\ba)$ is well-defined for $|\zeta|<\!\!<1$.
We choose a path of analytic continuation of $\varphi_p(\zeta;\ba,\frac{\bc}{d})$ as a straight line emanating from the origin towards infinity in a sector $\frac{\pi}{d}<{\rm arg}(\zeta)<\frac{2\pi}{d}$.
The result of analytic continuation $\varphi(\bg^{\bc}_{\bp};\bz)$ is
\begin{equation}
    z_1^p\tilde{\bz}^{-\frac{\bc}{d}-\frac{p}{d}\ba}\left[ C_{\frac{1}{14}0}^{(p)}\zeta^{-\frac{1}{2}}f_p(\zeta^{-1})+\sum_{i=0}^2C_{\frac{1}{7}i}^{(p)}f_{pi}(\zeta^{-1})\zeta^{-1}(\log\zeta)^i\right],
\end{equation}
where $f_p(z),f_{pi}(z)$ are power series with $f(0)=f_i(0)=1$.
For example, we obtain that
\begin{equation}
    C_{\frac{1}{14}0}^{(0)}=\frac{32 \sin \left(\frac{\pi }{14}\right) \sin
   ^5\left(\frac{\pi }{7}\right) \sin ^2\left(\frac{3 \pi
   }{14}\right) \sin
   \left(\frac{2 \pi }{7}\right) \sin ^2\left(\frac{5\pi }{14}\right)  \Gamma
   \left(\frac{1}{14}\right)^4 \Gamma
   \left(\frac{3}{14}\right)}{\pi ^{11/2}},\ \ \text{and}
\end{equation}

\begin{align}
    C_{\frac{1}{7}0}^{(0)}=&-\frac{49 \sin ^4\left(\frac{\pi }{7}\right) \sin \left(\frac{2 \pi }{7}\right) \Gamma \left(\frac{1}{7}\right) \Gamma
   \left(\frac{6}{7}\right)}{4 \pi ^5}\nonumber\\
   &\left(2 \left(3 \gamma +3\psi ^{(0)}\left(\frac{6}{7}\right)+14\right)^2-2
   \left(\psi ^{(1)}\left(\frac{6}{7}\right)+4 \psi ^{(1)}\left(\frac{8}{7}\right)\right)\right.\nonumber\\
   &\left.+\pi ^2 \left(17+8 \csc ^2\left(\frac{\pi
   }{7}\right)-4 \sec ^2\left(\frac{\pi }{14}\right)-4 \sec ^2\left(\frac{3 \pi }{14}\right)\right)\right).\label{eq:C_0}
\end{align}
Here, $\gamma=-\psi^{(0)}(1)$ is the Euler's constant.
Note that the numbers $C_{\frac{1}{14}0}$ and $C_{\frac{1}{7}0}$ are non-zero and $\Gamma(\frac{1}{2})=\sqrt{\pi}$.
Thus, these numbers belong to the ring $\PG(14)$.

\subsection{Perturbation of the generalized Dwork family}\label{subsec:p.Dwork}
In this section, we consider the polynomial $F$ defined by \eqref{eq:Fermat deformation}
We fix $f_i(t)\in K(\!(t)\!)$ $(i=1,\dots,m+n)$ so that the vector $\bw_f$ defined as in 
\eqref{eq:w-vec} does not lie on the skeleton ${\rm Sk}({\rm Fan}(A))$.
In this section, we are interested in computing the limiting periods of the family $X_f/\Delta$ when it is "close" to the limiting period of the generalized Dwork family studied in \S\ref{subsec:Dwork family}.
We begin with formulating this notion.

\begin{definition}
    A weight vector $\bw\in\R^{m+n}$ is a perturbation of the generalized Dwork family $D(z_1,z_{m+1},\dots,z_{m+n};\ba_1)$ if 
    \begin{equation}\label{eq:5.1}
        w_1d<\sum_{k=1}^nw_{m+k}a_{k1}
    \end{equation}
    and
    \begin{equation}\label{eq:5.2}
        (a_{j\ell}w_1-a_{j1}w_\ell)d< \sum_{k=1}^nw_{m+k}(a_{j\ell}a_{k1}-a_{j1}a_{k\ell})
    \end{equation}
    for any $\ell=2,\dots,m$ and any $j=1,\dots,n$ such that $a_{j1}\neq 0$.
    A family $X_f/\Delta$ is said to be a perturbation of the generalized Dwork family $D(z_1,z_{m+1},\dots,z_{m+n};\ba_1)$ if so is the weight vector $\bw_f$.
\end{definition}

\begin{proposition}
    A weight vector $\bw\in\R^{m+n}$ is a perturbation of the generalized Dwork family $D(z_1,z_{m+1},\dots,z_{m+n};\ba_1)$ if and only if the regular polyhedral subdivision $S(\bw)$ is a regular triangulation whose maximal simplices are $T(\ba_1):=\{ \{1,(m+1),\dots, \widehat{(m+j)},\dots,(m+n)\mid j=1,\dots,n,\ \ a_{j1}\neq 0\}$.
    Here $\widehat{(m+j)}$ means that the element $(m+j)$ is omitted.
\end{proposition}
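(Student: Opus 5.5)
The plan is to show that the conditions \eqref{eq:5.1}--\eqref{eq:5.2} say exactly that each candidate simplex $\sigma_j:=\{1,m+1,\dots,\widehat{m+j},\dots,m+n\}$ (with $a_{j1}\neq0$) lies in $S(\bw)$. The relevant tool is the height-function description \eqref{eq:normal vector}. Write $A=[\ba_1\cdots\ba_m\ d\be_1\cdots d\be_n]$ with heights $w_1,\dots,w_{m+n}$.

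First we identify the normal vector for $\sigma_j$. The equations $\boldsymbol n\cdot(d\be_k)=w_{m+k}$ for $k\neq j$ give $n_k=w_{m+k}/d$. The equation $\boldsymbol n\cdot\ba_1=w_1$ then determines $n_j$; this is possible since $a_{j1}\neq0$:
\[
n_j=\frac{1}{a_{j1}}\Big(w_1-\sum_{k\neq j}\frac{w_{m+k}}{d}a_{k1}\Big).
\]
This gives the unique affine function $\boldsymbol n^{(j)}$ on $\sigma_j$, and $\sigma_j$ is a simplex because $\det[\ba_1, d\be_k (k\ne j)]=\pm d^{n-1}a_{j1}\neq0$.

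Next we translate the strict inequalities $\boldsymbol n^{(j)}\cdot\ba_i'<w_i$ for $i\notin\sigma_j$. There are two kinds of points outside $\sigma_j$.
\begin{enumerate}
\item For the point $d\be_j$, the inequality $d\,n_j<w_{m+j}$ is, after multiplying by $a_{j1}>0$ (entries are nonnegative), equivalent to $w_1d<\sum_k w_{m+k}a_{k1}$, which is \eqref{eq:5.1}.
\item For $\ba_\ell$ with $\ell\ge2$, substituting $n_j$ into $\boldsymbol n^{(j)}\cdot\ba_\ell<w_\ell$ and multiplying by $a_{j1}d>0$ gives
\[
(a_{j\ell}w_1-a_{j1}w_\ell)d<\sum_k w_{m+k}(a_{j\ell}a_{k1}-a_{j1}a_{k\ell}),
\]
which is \eqref{eq:5.2}. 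The $k=j$ term vanishes automatically, so the sum over $k\neq j$ may be written over all $k$.
\end{enumerate}
Hence \eqref{eq:5.1}--\eqref{eq:5.2} hold if and only if every $\sigma_j$ with $a_{j1}\ne0$ belongs to $S(\bw)$ as a face with strict separation. I will be careful about the sign convention: when $a_{j1}>0$ the multiplications preserve the inequality direction.

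Finally, we show $T(\ba_1)$ is a full triangulation. The cones $\mathrm{cone}(\sigma_j)$ cover $\mathrm{pos}(A)=\R_{\ge0}^n$. To see this, write any $\bx\in\R^n_{\ge0}$ as $t\ba_1+\sum_{k\ne j}s_k d\be_k$, choosing $j$ to minimize $x_j/a_{j1}$ over $a_{j1}>0$ and setting $t=x_j/a_{j1}$. Their interiors are disjoint, since $\bx$ in the interior of $\mathrm{cone}(\sigma_j)$ forces $j$ to be the unique minimizer. So these simplices form a subdivision with support $\mathrm{pos}(A)$. Therefore, if all $\sigma_j\in S(\bw)$, then $S(\bw)$ is a regular subdivision whose maximal cells contain these full-dimensional simplices covering everything. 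Maximal cells of a subdivision have disjoint interiors, so these simplices are exactly the maximal cells, and $S(\bw)$ is the triangulation $T(\ba_1)$. Conversely, if $S(\bw)$ has maximal simplices $T(\ba_1)$, each $\sigma_j\in S(\bw)$, and the strict inequalities above hold, giving \eqref{eq:5.1}--\eqref{eq:5.2}.

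The main obstacle is the bookkeeping in the second kind of point, namely getting \eqref{eq:5.2} exactly with the right signs and index ranges. A secondary subtlety is the maximality argument, i.e. that the strict separation excludes extra points from the cells. This is handled by the strict inequalities themselves.
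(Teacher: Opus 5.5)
Your proof is correct, but it works in the primal picture, while the paper works in the Gale-dual picture.

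\textbf{The paper's route.} It takes the columns of the matrix $B$ as a basis of $L_{A,\R}$, so the Gale dual vectors become the rows $\bb_i$ and $\pi_A(\bw)=\bigl(w_\ell d-\sum_k w_{m+k}a_{k\ell}\bigr)_\ell$. It then uses the cited identity $\pi_A(C_S)=\bigcap_{\sigma\in S}\bar C_\sigma$ and tests membership of $\pi_A(\bw)$ in each simplicial cone $\R_{>0}\bb_2+\cdots+\R_{>0}\bb_m+\R_{>0}\bb_{m+j}$. This amounts to $v_1<0$ and $a_{j\ell}v_1<a_{j1}v_\ell$, which are exactly \eqref{eq:5.1} and \eqref{eq:5.2}. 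The paper writes $\leq$ there, but the open cones give strict inequalities.

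\textbf{Your route.} You solve \eqref{eq:normal vector} for the unique normal vector $\boldsymbol n^{(j)}$ of each $\sigma_j$ and read off the strict inequalities at $d\be_j$ and at the $\ba_\ell$, $\ell\ge2$. You get the same linear conditions, computed in dual coordinates; your sign bookkeeping, including the vanishing $k=j$ term, checks out.

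\textbf{What each buys.} Your approach is self-contained. You show explicitly that the cones over the $\sigma_j$ cover $\R^n_{\ge0}$ with pairwise disjoint interiors. You also show that strict separation together with uniqueness of $\boldsymbol n^{(j)}$ makes each $\sigma_j$ a maximal cell. Hence ``all $\sigma_j\in S(\bw)$'' really forces $S(\bw)=T(\ba_1)$, using the standard fan property that maximal cells are full-dimensional and meet only along common faces. The paper leaves this step implicit, in the identity $C_S=\bigcap_{\sigma\in S}C_\sigma$ and in the one-line claim that $T(\ba_1)$ is a regular triangulation. In exchange, the paper's route displays $C_{T(\ba_1)}$ directly as an intersection of simplicial cones in Gale coordinates. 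That matches the secondary-fan language it uses elsewhere, for the skeleton and for the chambers adjacent to $T(\Fer)$.
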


\begin{proof}
Firstly, it is easily seen that $T(\ba_1)$ is a regular triangulation.
In fact, one can take a weight vector $\bw$ so that $w_1\ll w_{m+1},\dots,w_{m+n}\ll w_2,\dots,w_m$ to obtain $T(\ba_1)=S(\bw)$.
    Let us consider an $(m+n)\times m$ matrix
    \begin{equation}\label{eq:B matrix}
    B=
    \begin{pmatrix}
        d&&\\
         &\ddots&\\
         &&d\\
         -\ba_1&\cdots&-\ba_m
    \end{pmatrix}
    .
    \end{equation}
    The column vectors of $B$ forms a basis of $L_{A,\R}$ where $A$ is an $n\times (m+n)$ matrix as in \eqref{eq:A matrix of Fermat deformations}.
    It gives rise to an isomorphism of vector spaces $L_{A,\R}\simeq \R^m$, which also induces another isomorphism $L_{A,\R}^*\simeq \R^m$, where the latter $\R^m$ is regarded as the dual vector space of the former $\R^m$ via the dot product.
    Let $\bb_1,\dots,\bb_{m+n}$ be row vectors of $B$.
    By \eqref{eq:bar C} and \eqref{eq:bar C sigma gale}, the cone $\bar{C}_{T(\ba_1)}$ is given by
    \begin{equation}\label{eq:5.3}
\bar{C}_{T(\ba_1)}=\bigcap_{j}\big(\R_{>0}\bb_2+\cdots+\R_{>0}\bb_m+\R_{>0}\bb_{m+j}\big)
    \end{equation}
    where $j=1,\dots,n$ runs over indices such that $a_{j1}\neq 0$.
    Under the identification $L_{A,\R}^*\simeq \R^m$, a vector $\bv=(v_1,\dots,v_m)\in \R^m$ belongs to a simplicial cone $\big(\R_{>0}\bb_2+\cdots+\R_{>0}\bb_m+\R_{>0}\bb_{m+j}\big)$ if and only if
    \begin{equation}\label{eq:5.4}
        v_1\leq 0\quad\text{and}\quad a_{j\ell}v_1\leq a_{j1}v_\ell,\ \ \ell=2,\dots,m.
    \end{equation}
    Under the identification $L_{A,\R}^*\simeq \R^m$, the quotient map $\pi_A:\R^{m+n}\to L_{A,\R}^*$ is identified with a map
    \begin{equation}\label{eq:5.5}
        \R^{m+n}\ni\bw=(w_1,\dots,w_{m+n})\mapsto\left( w_1d-\sum_{k=1}^nw_{m+k}a_{k1},\dots,w_md-\sum_{k=1}^nw_{m+k}a_{km}\right)\in\R^m.
    \end{equation}
    Combining \eqref{eq:5.3}, \eqref{eq:5.4} and \eqref{eq:5.5}, we obtain that a vector $\bw$ belongs to $C_{T(\ba_1)}=\pi_A^{-1}(\bar{C}_{T(\ba_1)})$ if and only if \eqref{eq:5.1} and \eqref{eq:5.2} are both satisfied.
\end{proof}

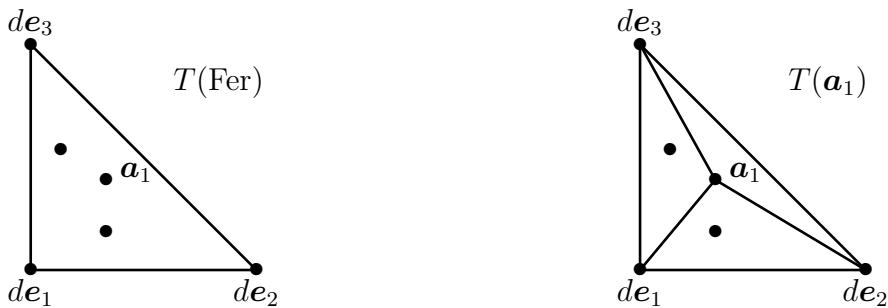
\begin{figure}[h]
  \begin{minipage}[b]{0.48\columnwidth}
\centering
\begin{tikzpicture}
\node at(2.5,2.5){$T(\Fer)$};
 \draw[line width=1pt]
(0,0)--(3,0)--(0,3)--(0,0);
\node at(0,0-0.3){$d\be_1$};
\node at(3,0-0.3){$d\be_2$};
\node at(0,3+0.3){$d\be_3$};
\node at(1,1.2){$\bullet$};
\node at(1.4,1.3){$\ba_1$};
\node at(0.4,1.6){$\bullet$};
\node at(1,0.5){$\bullet$};
\node at(0,0){$\bullet$};
\node at(3,0){$\bullet$};
\node at(0,3){$\bullet$};
\end{tikzpicture}
\end{minipage}
\begin{minipage}[b]{0.48\columnwidth}
\centering
\begin{tikzpicture}
\node at(2.5,2.5){$T(\ba_1)$};
 \draw[line width=1pt]
(0,0)--(3,0)--(0,3)--(0,0);
 \draw[line width=1pt]
(0,3)--(1,1.2)--(0,0);
 \draw[line width=1pt]
(3,0)--(1,1.2);
\node at(0,0-0.3){$d\be_1$};
\node at(3,0-0.3){$d\be_2$};
\node at(0,3+0.3){$d\be_3$};
\node at(1.4,1.3){$\ba_1$};
\node at(0.4,1.6){$\bullet$};
\node at(1,1.2){$\bullet$};
\node at(1,0.5){$\bullet$};
\node at(0,0){$\bullet$};
\node at(3,0){$\bullet$};
\node at(0,3){$\bullet$};
\end{tikzpicture}
\end{minipage}
\caption{Regular triangulations $T(\Fer)$ and $T(\ba_1)$.}
\end{figure}

\begin{remark}
The cones $\bar{C}_{T(\Fer)}$ and $\bar{C}_{T(\ba_1)}$ share a facet in the secondary fan.
In fact, it is not hard to see that any maximal cone $\bar{C}_T$ that shares a facet with $\bar{C}_{T(\Fer)}$ is of the form $\bar{C}_T=\bar{C}_{T(\ba_i)}$ for some $i=1,\dots,m$.
To see this, we only need to recall that $\bar{C}_T$ that shares a facet with $\bar{C}_{T(\Fer)}$ if and only if $T$ is a modification of $T(\Fer)$ with respect to a circuit which contains a non-empty subset of $\{d\be_1,\dots,d\be_n\}$ (\cite[Chapter 7, section 2]{GKZbook}).
On the other hand, a circuit which contains a non-empty subset of $\{d\be_1,\dots,d\be_n\}$ is of the form $\{ d\be_{j_1},\dots,d\be_{j_r},\ba_i\}$.
The result of the modification of $T(\Fer)$ along this circuit is nothing but $T(\ba_1)$.
\end{remark}

In the following, we assume that $X_f/\Delta$ is a perturbation of the generalized Dwork family $D(z_1,z_{m+1},\dots,z_{m+n};\ba_1)$.
We also assume that the column vectors of the matrix \eqref{eq:B matrix} forms a $\Z$-basis of $L_A$. 
If $(u_1,\dots,u_{m+n})\in L_A$, it follows from the definition of  $A$ \eqref{eq:A matrix of Fermat deformations} that $a_{i1}u_1+\cdots+a_{im}u_m\equiv0\mod d$ for $i=1,\dots,n$.
Therefore, what we assume is that these equations have only trivial solutions $u_{1},\dots,u_m$ modulo $d$.
More explicitly, we assume the following condition:
\begin{equation}\label{eq:a condition}
m\leq n-1\quad\text{and}\quad \gcd(d,\{\Delta_{i_1,\dots,i_m}\}_{1\leq i_1<\cdots<i_m\leq n})=1,
\end{equation}
where $\Delta_{i_1,\dots,i_m}$ denotes $({i_1,\dots,i_m})$-minor of an $n\times m$ matrix whose $(i,j)$-entry is $a_{ij}$.
Note that $|\ba_i|=d$ implies that $\bar{\ba}_1,\dots,\bar{\ba}_m$ are linearly dependent on $(\Z/d\Z)^n$.

To compute the limiting periods of $X_f/\Delta$, it is enough to compute a $\Q(\zeta_d)$-basis of the image of the map $\Q(\zeta_d)\otimes_{\Q}H_{n-1}^B(U_{\bz},\Q)\to\Sol^{T(\ba_1)}(H^{n-1}_\dR(U/S))$ defined by \eqref{eq:betti}.
For a fixed $\bc\in\{1,\dots,d-1\}^n$, the image of the map \eqref{eq:integral of omega_c} has a $\Q(\zeta_d)$-basis
$\{(2\pi\ii)^{n}\varphi(\bg^{\bc}_{\bp};\bz)\}_{\bp\in P_{\bc}}$ by Theorem \ref{thm:image of the integral of omega_c}.
For any $\bp\in P_{\bc}$, we shall apply Theorem \ref{thm:MB} to the series $\varphi(\gamma^{\bc}_{\bp};\bz)$.
Note that $\varphi(\gamma^{\bc}_{\bp};\bz)$ under the assumption \eqref{eq:a condition} takes the following form:
\begin{equation}
\varphi(\gamma^{\bc}_{\bp};\bz)
=
\sum_{(i_1,\dots,i_m)\in\mathbb{Z}_{\geq 0}^m}\frac{\prod_{j=1}^mz_j^{p_j+i_jd}\tilde{{\bz}}^{-\frac{\bc}{d}-\frac{1}{d}\sum_{j=1}^m(p_j+i_jd)\ba_j}}{\prod_{j=1}^m\Gamma(1+p_j+i_jd)\Gamma\left(1-\frac{\bc}{d}-\frac{1}{d}\sum_{j=1}^m(p_j+i_jd)\ba_j\right)},
\end{equation}
where we set $\tilde{\bz}=(z_{m+1},\dots,z_{m+n})$.
For $\bc\in\C^n$, $\bp=(p_1,\dots,p_m)\in \Z^{m}$ and an index $1\leq i\leq n$ such that $a_{i1}\neq 0$, let $\bg^{\bc}_{\bp}(\ba_1,i)\in\C^{m+n}$ denote the unique solution $\bg\in\C^{m+n}$ to the equation 
\begin{equation}
    A\bg=-\bc,\quad\gamma_{j}=p_j\ (2\leq j\leq m)\quad\text{and}\quad\gamma_{m+i}=p_1.
\end{equation} 
More explicitly, the first coordinate  of $\bg^{\bc}_{\bp}(\ba_1,i)$ is
\begin{equation}
    -\frac{1}{a_{i1}}\left(c_i+p_1d+\sum_{k=2}^mp_k a_{ik}\right)
\end{equation}
and the $(m+j)$-th coordinate for $1\leq j\leq n,j\neq i$ is
\begin{equation}
    \frac{1}{d}\left(\frac{a_{j1}}{a_{i1}}\left(c_i+p_1d+\sum_{k=2}^mp_k a_{ik}\right)-c_j-\sum_{k=2}^m p_k a_{jk}\right).
\end{equation}
Finally, for $\bq\in\Z^m$, we introduce a notation $C(\ba_1,i,\bp,\bq)$ as follows:
\begin{equation}\label{eq:Coeff}
\begin{split}
    C(\ba_1,i,\bp,\bq,\bc):=&\frac{(-1)^{p_2+\cdots+p_m}2^{d-1}}{a_{i1}}\sin\frac{\pi}{d}\left(\bc+{p_1}\ba_1+\cdots+{p_m}\ba_m\right)\times\\
    &\frac{(-1)^{|\bq|}}{\sin\pi\bg^{\bc}_{\bq}(\ba_1,i)_1\prod_{j=1,j\neq i}^n\sin\pi\bg^{\bc}_{\bq}(\ba_1,i)_{m+j}}\times\\
    &\prod_{j=1}^{d-1}\sin\frac{\pi}{d}\left(\bg^{\bc}_{\bq}(\ba_1,i)_1+p_1+j\right)\exp\left\{\frac{(d-1)\pi\ii}{d}\bg^{\bc}_{\bq}(\ba_1,i)_1\right\}.
\end{split}
\end{equation}
We note that $C(\ba_1,i,\bp,\bq,\bc)=C(\ba_1,i,\bp,\bq',\bc)$ if $\bu=\bg^{\bc}_{\bq'}(\ba_1,i)-\bg^{\bc}_{\bq}(\ba_1,i)\in L_A$.
In fact, the second factor of \eqref{eq:Coeff} is invariant by the replacement $\bg^{\bc}_{\bq}(\ba_1,i)\mapsto \bg^{\bc}_{\bq}(\ba_1,i)+\bu$ in view of the identity $|\bu|=0$.
The third factor is also invariant because $u_1$ is always a multiple of $d$.
Let $\bar{\ba}_1$ denote the equivalence class of $\ba_1$ in $M:=\Z/d\Z\times\cdots\times\Z\times\cdots\times\Z/d\Z$, where $\Z$ is the $i$-th factor.
We set $M_1:=M/\Z\bar{\ba}_1$.
We consider a map $h:\Z^m\ni(i_1,\dots,i_m)\mapsto i_1d\be_{i}+(p_2+i_2d)\ba_2+\cdots+(p_m+i_nd)\ba_m\in M_1$ and let $\{\bq_r(\bp,i)\}_{r=1}^{r_i}$ be a complete system of representatives of $\mathbb{Z}^m/{\rm Ker}(h)$.

\begin{theorem}\label{thm:analytic continuation of p.Dwork}
    Assume that $\bc\in\C^{n}$ is very generic. Then, there is a path of analytic continuation $\gamma$ from $U_{T(\Fer)}$ to $U_{T(\ba_1)}$  such that $\varphi(\bg^{\bc}_{\bp};\bz)$ is analytically continued to
    \begin{equation}\label{eq:basis of perturabation of gDwork}
        \sum_{i=1;a_{i1}\neq 0}^n\sum_{r=1}^{r_i}C(\ba_1,i,\bp,\bq_r(\bp,i),\bc)\varphi(\bg^{\bc}_{\bq_r(\bp,i)}(\ba_1,i);\bz).
    \end{equation}
    When $\bc$ is not very generic, the right-hand side is still a valid formula as an SST-limit.
\end{theorem}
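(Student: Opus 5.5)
Our plan is to reduce the multivariable continuation to the one-variable Mellin--Barnes continuation of Theorem \ref{thm:MB}, applied to the variable $z_1$ with $z_2,\dots,z_m$ frozen, and then to reassemble the result into Gamma series for $T(\ba_1)$. Write the Gamma series $\varphi(\bg^{\bc}_{\bp};\bz)$ of $T(\Fer)$ as an iterated sum over $(i_2,\dots,i_m)\in\Z_{\geq0}^{m-1}$ of an inner sum over $i_1\geq0$. For fixed $i_2,\dots,i_m$ we set $\bc_{\rm eff}:=\bc+\sum_{k\geq2}(p_k+i_kd)\ba_k$. Using $\Gamma(s)\Gamma(1-s)=\pi/\sin\pi s$ exactly as in \eqref{eq:computation}, the inner sum becomes
\[
z_1^{p_1}\tilde\bz^{-\bc_{\rm eff}/d-p_1\ba_1/d}\,\pi^{-n}\sin\pi\!\left(\tfrac{\bc_{\rm eff}+p_1\ba_1}{d}\right)\varphi_{p_1}\!\left(-z_1\tilde\bz^{-\ba_1/d};\ba_1,\tfrac{\bc_{\rm eff}+p_1\ba_1}{d}\right).
\]
Since $|\ba_1|=d$, Theorem \ref{thm:MB} applies; if the positivity hypothesis fails, Remark \ref{rem:integration contour} supplies a deformed contour. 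This yields a Mellin--Barnes integral. The path $\gamma$ is chosen as in the example: $\zeta=-z_1\tilde\bz^{-\ba_1/d}$ moves radially to infinity inside the sector \eqref{eq:condition}, while $z_2,\dots,z_m$ stay small enough that the outer sum keeps converging. That the endpoint lies in $U_{T(\ba_1)}$ is guaranteed by the previous Proposition characterising $C_{T(\ba_1)}$ through \eqref{eq:5.1} and \eqref{eq:5.2}. A dominated-convergence bound from Stirling's estimate, uniform in $(i_2,\dots,i_m)$, should justify interchanging the outer sum with the continuation.

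For large $|\zeta|$ we use the residue expansion \eqref{eq:residues}. Because $\bc$ is very generic, all poles of $\Gamma(\balpha-\frac{p_1}{d}\ba_1-\ba_1 s)$ in the right half plane are simple. They are indexed by $i$ with $a_{i1}\neq0$ together with an integer, namely $s$ satisfying $\alpha_i-\frac{p_1}{d}a_{i1}-a_{i1}s=-k$ with $k\geq0$. At such a pole the exponent of $z_{m+i}$ becomes an integer, which matches the defining condition $\gamma_{m+i}=p_1$-shift of $\bg^{\bc}_{\bq}(\ba_1,i)$. Each residue is therefore a single term of a Gamma series with exponent in $\bg^{\bc}_{\bq}(\ba_1,i)+L_A$. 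Its coefficient is computed from the factors $\Gamma(ds)\prod_j\sin\pi(s+\tfrac{p_1+j}{d})$ and $\Gamma$ of the remaining coordinates. The reflection formula turns these into $1/\Gamma(1+\gamma)$ times sines, and this produces the three factors of \eqref{eq:Coeff}: the prefactor $\sin\frac{\pi}{d}(\bc+\sum p_k\ba_k)$, the reciprocal sines, and the product of $d-1$ sines with the phase $e^{(d-1)\pi\ii\gamma_1/d}$ coming from $(e^{-\pi\ii/d}\zeta)^{-ds-p_1}$. The factor $1/a_{i1}$ comes from the residue of $\Gamma(\alpha-a_{i1}s)$.

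Next we regroup. Summing over $(i_2,\dots,i_m)$ and over the pole index, the exponents obtained range over $\bigcup_{\bq}(\bg^{\bc}_{\bq}(\ba_1,i)+L_A)$, where $L_A$ has the $\Z$-basis given by the columns of $B$ under \eqref{eq:a condition}. The noted invariance $C(\ba_1,i,\bp,\bq,\bc)=C(\ba_1,i,\bp,\bq',\bc)$ under $L_A$-shifts lets us factor the coefficient out of each class, giving exactly $C\cdot\varphi(\bg^{\bc}_{\bq_r(\bp,i)}(\ba_1,i);\bz)$. The classes are parametrized by $\Z^m/\mathrm{Ker}(h)$ via the stated map $h$ into $M_1$. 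We expect this bookkeeping to be the main obstacle: every Gamma-series term must appear exactly once, and the sign $(-1)^{|\bq|}$ must be consistent. We would first check it on the case $m=1$ against the worked example.

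Finally, the non-very-generic case follows by replacing $\bc$ with $\bc(\varepsilon)$. Both sides are then identities of solutions of $M_A(\bc(\varepsilon))$ whose coefficients are meromorphic in $\varepsilon$, and taking SST-limits (Definition \ref{defn:SST-limit defn}) preserves the identity.
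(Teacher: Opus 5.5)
Your proposal is correct and follows essentially the same route as the paper. Both rewrite $\varphi(\bg^{\bc}_{\bp};\bz)$ as an outer sum over $(i_2,\dots,i_m)$ of the Mellin--Barnes integrals $I_{p_1}$ of Theorem \ref{thm:MB} (as in \eqref{eq:varphi convenient}), continue in $z_1$ inside the sector, collect the residues at the poles \eqref{eq:pole spiral}, and regroup the terms into Gamma series using the $L_A$-invariance of $C(\ba_1,i,\bp,\bq,\bc)$; your handling of the non-very-generic case via $\bc(\varepsilon)$ and SST-limits is, if anything, more explicit than the paper's.
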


\begin{proof}
    
By the same computation as \eqref{eq:computation}, the following identities hold true:

\begin{equation}\label{eq:varphi convenient}
\begin{split}
    \varphi(\gamma^{\bc}_{\bp};\bz)=&(-1)^{|\bp|}\frac{\sin\pi(\frac{\bc}{d}+\frac{p_1}{d}\ba_1+\cdots+\frac{p_m}{d}\ba_m)}{\pi^n}\times\\
    &\sum_{i_2,\dots,i_{m}\geq 0}\frac{(-z_1)^{p_1}(-z_2)^{p_2+i_2d}\cdots (-z_m)^{p_m+i_md}\tilde{\bz}^{-\frac{\bc}{d}-\frac{p_1}{d}\ba_1-\frac{1}{d}\sum_{j=2}^m(p_j+di_j)\ba_j}}{\Gamma(1+p_2+i_2d)\cdots\Gamma(1+p_m+i_md)}\times\\
    &I_{p_1}\left(e^{\pi\ii}z_1\tilde{\bz}^{-\frac{\ba_1}{d}};\ba_1,\frac{\bc}{d}+\frac{p_1}{d}\ba_1+\frac{1}{d}\sum_{j=2}^m(p_j+di_j)\ba_j\right).
\end{split}
\end{equation}
The last series is convergent when $|{\rm arg}(e^{\pi\ii}z_1\tilde{\bz}^{-\frac{\ba_1}{d}})|<\frac{\pi}{d}$ and $|z_i^d\tilde{\bz}^{-\ba_i}|$ are small enough for any $i=2,\dots,m$ by the same method as \cite[Section 3.3]{matsubara2022global} in view of Theorem \ref{thm:MB} and Remark \ref{rem:integration contour}.
Moreover, it is readily seen that $\bz\in U_{T(\ba_1)}$ if $|z_1^{-d}\tilde{\bz}^{{\ba_1}}|$ and $|z_i^d\tilde{\bz}^{-\ba_i}|$ are small enough for any $i=2,\dots,m$.
To simplify the discussion, we may assume that ${\rm Re}(c_i)>0$ for any $i=1,\dots,n$ without loss of generality.
A pole $s$ with the positive real part of the integrand of $I_{p_1}\left(e^{\pi\ii}z_1\tilde{\bz}^{-\frac{\ba_1}{d}};\ba_1,\frac{\bc}{d}+\frac{p_1}{d}\ba_1+\frac{1}{d}\sum_{j=2}^m(p_j+di_j)\ba_j\right)$ is of the following form: there is an index $1\leq i\leq n$ such that $a_{i1}\neq 0$ and $i_1\in\Z$ so that
\begin{equation}\label{eq:pole spiral}
s=\frac{1}{a_{i1}}\left( \frac{c_{i}}{d}+\frac{1}{d}\sum_{j=2}^m(p_i+di_j)a_{ij}+i_1\right).
\end{equation}
It follows that we obtain an analytic continuation
\begin{equation}
\begin{split}
&\varphi(\gamma^{\bc}_{\bp};\bz)\\
=&\sum_{i=1;a_{i1}\neq 0}^n\sum_{(i_1,\dots,i_m)\in\mathbb{Z}_{\geq 0}^m}C(\ba_1,i,\bp,(i_1,p_2+i_2d,\dots,p_m+i_md),\bc)
\frac{z^{\bg_{(i_1,p_2+i_2d,\dots,p_m+i_md)}^{\bc}(\ba_1,i)}}{\Gamma(1+\bg_{(i_1,p_2+i_2d,\dots,p_m+i_md)}^{\bc}(\ba_1,i))}\\
=&\sum_{i=1;a_{i1}\neq 0}^n\sum_{r=1}^{r_i}C(\ba_1,i,\bp,\bq_r(\bp,i),\bc)
\sum_{\bu\in L_A}
\frac{z^{\bg_{\bq_r(\bp,i)}^{\bc}(\ba_1,i)+\bu}}{\Gamma(1+\bg_{\bq_r(\bp,i)}^{\bc}(\ba_1,i)+\bu)}.
\end{split}
\end{equation}
\end{proof}

\begin{example}
    Let $(n,d)=(6,7)$, $\ba_1=(2,1,1,1,1,1)$, $\ba_2=(1,1,2,1,1,1)$ and $F=z_1\bx^{\ba_1}+z_2\bx^{\ba_2}+z_3x_1^7+\cdots+z_8x_6^7$.
The index set $I$ defined by \eqref{eq:I} is subdivided into $343$ equivalence classes $I_{\bc}$ defined in \eqref{eq:definition:Ic}.
The only possible cardinalities of $I_{\bc}$ are $15,16,20,21,26$, or $30$, each of which has $48$, $72$, $108$, $72$, $42$, $1$ kinds respectively.
Let us choose $\bc=(1,2,1,1,1,1)$ so that the corresponding set $P_{\bc}$ has cardinality $26$.
To each $\bp\in P_{\bc}$, we can compute the set $Q_{\bp}:=\{\bq_r(\bp,i);r=1,\dots,r_i\}_{i=1,\dots,6}$.
For example, if $\bp=(0,0)$, $r_1=2$, $\bq_1(\bp,1)=(0,0)$, $\bq_2(\bp,1)=(1,0)$ and for $i=2,\dots,6$, $r_i=1$, $\bq_1(\bp,i)=(0,0)$.
When $\bc$ is perturbed to $\bc(\varepsilon)$, the vectors $\{\bg^{\bc(\varepsilon)}_{\bq};\bq\in Q_{\bp}\}$ are distinct, while some of them coincide when $\varepsilon=0$ so that $\bc(0)=\bc$.
More precisely, we have the following formulas:
\[
\begin{split}
    \bg_{(0,0)}^{\bc}(\ba_1,1)&=\left(-\frac{1}{2},0,0,-\frac{3}{14},-\frac{1}{14},-\frac{1}{14},-\frac{1}{14},-\frac{1}{14}\right)\\
    \bg_{(1,0)}^{\bc}(\ba_1,1)&=\left(-4,0,1,\frac{2}{7},\frac{3}{7},\frac{3}{7},\frac{3}{7},\frac{3}{7}\right)\\
    \bg_{(0,0)}^{\bc}(\ba_1,2)&=\left(-2,0,\frac{3}{7},0,\frac{1}{7},\frac{1}{7},\frac{1}{7},\frac{1}{7}\right)\\
    \bg_{(0,0)}^{\bc}(\ba_1,i)&=\left(-1,0,\frac{1}{7},-\frac{1}{7},0,0,0,0\right)\quad (i=3,4,5,6).
\end{split}
\]
It is readily seen that for a suitable choice of a weight vector $\bw\in\R^8$ such that $S(\bw)=T(\ba_1)$, the vector $\bg_{(0,0)}^{\bc}(\ba_1,i)$ for $i\geq 3$ is the leading exponent of the SST-limit \eqref{eq:basis of perturabation of gDwork}.
A straightforward computation employing Mathematica shows that the sum corresponding to the leading term is computed as
\[
\lim_{\varepsilon\to 0}\sum_{i=3}^6C(\ba_1,i,\bp,(0,0),\bc(\varepsilon))\frac{z^{\bg_{(0,0)}^{\bc(\varepsilon)}(\ba_1,i)}}{\Gamma(1+\bg_{(0,0)}^{\bc(\varepsilon)}(\ba_1,i))}
=C_0\zeta+C_1\zeta\log\zeta+C_2\zeta(\log\zeta)^2,\ \zeta=z_1^{-1}z_3^{\frac{1}{7}}z_4^{-\frac{1}{7}}.
\]
As in \eqref{eq:C_0}, $C_0$ contains numbers $\Gamma(\frac{1}{7}),\Gamma(\frac{6}{7}),\gamma,\psi^{(0)}(\frac{6}{7}),\psi^{(1)}(\frac{1}{7})$ and belong to the ring $\PG(14)$.  
\end{example}

\bibliographystyle{siam}
\bibliography{references.bib}

\bigskip
\noindent
\begin{tabular}{ll}
\textbf{Saiei-Jaeyeong Matsubara-Heo} & \textbf{Masanori Asakura} \\
Graduate School of Information Sciences & Department of Mathematics \\
Tohoku University & Hokkaido University \\
Sendai 980-0845, Japan & Sapporo 060-0808, Japan \\
\texttt{saiei@tohoku.ac.jp} & \texttt{asakura@math.sci.hokudai.ac.jp}
\end{tabular}

\end{document}